\newtheorem{thm}{Theorem}[section]
\newtheorem{lem}[thm]{Lemma}
\newtheorem{prop}[thm]{Proposition}
\theoremstyle{definition}
\newtheorem{defn}[thm]{Definition}
\theoremstyle{remark}
\newtheorem{rem}[thm]{Remark}
\numberwithin{equation}{section}
\newcommand{\abracket}[1]{\left\langle#1\right\rangle}
\newcommand{\bbracket}[1]{\left[#1\right]}
\newcommand{\fbracket}[1]{\left\{#1\right\}}
\newcommand{\bracket}[1]{\left(#1\right)}
\newcommand{\LS}[1]{(\!(#1)\!)}
\newcommand{\PS}[1]{[\![#1]\!]}
\newcommand{\C}{\mathbb{C}}
\newcommand{\R}{\mathbb{R}}
\newcommand{\Q}{\mathbb{Q}}
\newcommand{\Z}{\mathbb{Z}}
\newcommand{\E}{\mathbb{E}}
\newcommand{\mc}{\mathcal}
\newcommand{\g}{\mathfrak{g}}
\newcommand{\lk}{\mathfrak{k}}
\newcommand{\co}{\mathcal{O}}
\newcommand{\HC}{Harish-Chandra }
\renewcommand{\*}{\bullet}
\newcommand{\pa}{\partial}
\newcommand{\OO}{{\mathcal O}}
\newcommand{\CE}{Chevalley--Eilenberg }
\newcommand{\into}{\hookrightarrow}
\newcommand{\W}{\mathcal W}
\newcommand{\iso}{\cong}
\renewcommand{\sp}{\mathfrak{sp}}
\newcommand{\hOmega}{\hat{\Omega}}
\renewcommand{\d}{d_{2n}}
\newcommand{\un}[1]{\underline{#1}}
\newcommand{\uTr}{\widehat{\Tr \ }}
\DeclareMathOperator{\Lie}{Lie}
\DeclareMathOperator{\Hom}{Hom}
\DeclareMathOperator{\Sp}{Sp}
\DeclareMathOperator{\Aut}{Aut}
\DeclareMathOperator{\End}{End}
\DeclareMathOperator{\GL}{GL}
\DeclareMathOperator{\PGL}{PGL}
\DeclareMathOperator{\gl}{gl}
\DeclareMathOperator{\Tr}{Tr}
\DeclareMathOperator{\tr}{tr}
\DeclareMathOperator{\pr}{pr}
\DeclareMathOperator{\Ch}{Ch}
\DeclareMathOperator{\trace}{tr}
\DeclareMathOperator{\Id}{Id}
\DeclareMathOperator{\Obs}{Obs}
\DeclareMathOperator{\desc}{desc}
\DeclareMathOperator{\wt}{wt}
\DeclareMathOperator{\Sym}{Sym}
\DeclareMathOperator{\Conf}{Conf}
\DeclareMathOperator{\Cyc}{Cyc}
\newtheorem{definition}{Definition}
\newtheorem{proposition}{Proposition}
\newtheorem{lemma}{Lemma}
\newtheorem{theorem}{Theorem}
\newcommand{\bd}{\begin{definition}}
	\newcommand{\ed}{\end{definition}}
\newcommand{\bp}{\begin{proposition}}
	\newcommand{\ep}{\end{proposition}}
\newcommand{\bl}{\begin{lemma}}
	\newcommand{\el}{\end{lemma}}
\newcommand{\bpf}{\begin{proof}}
	\newcommand{\epf}{\end{proof}}
\newcommand{\bt}{\begin{theorem}}
	\newcommand{\et}{\end{theorem}}
\newcommand{\bi}{\begin{itemize}}
	\newcommand{\ei}{\end{itemize}}
\newcommand{\si}[1]{(\textcolor{blue}{Si: #1})}
\begin{document}

\title[]{Geometry of Localized Effective Theories, Exact Semi-classical Approximation and the Algebraic Index}%
 \author{Zhengping Gui, Si Li, and Kai Xu}

  \address{
Z. Gui: Yau Mathematical Sciences Center, Tsinghua University, Beijing, China;
}
\email{gzp16@mails.tsinghua.edu.cn}
  \address{
S. Li:
\begin{tabular}{l}
Yau Mathematical Sciences Center, Tsinghua University, Beijing, China\\ Institute for Advanced Study, New Jersey, USA
\end{tabular}
}

\email{sili@mail.tsinghua.edu.cn}
  \address{
K. Xu: Department of Mathematics, Harvard University, Massachusetts, USA;
}
\email{kaixu@math.harvard.edu}

\thanks{}%
\subjclass{}%
\keywords{}%

%\date{}%
%\dedicatory{}%
%\commby{}%
% ----------------------------------------------------------------
\begin{abstract}
In this paper we propose a general framework to study the quantum geometry of $\sigma$-models when they are effectively localized to small quantum fluctuations around constant maps. Such effective theories have surprising \emph{exact} descriptions at all loops in terms of target geometry and can be rigorously formulated. We illustrate how to turn the physics idea of exact semi-classical approximation into a geometric set-up in this framework, using Gauss-Manin connection.  As an application, we  carry out this program in details by the example of topological quantum mechanics, and explain how to implement the idea of exact semi-classical approximation into a proof of the algebraic index theorem. The proof resembles much of the physics derivation of Atiyah-Singer index theorem and clarifies the geometric face of many other mathematical constructions.

\end{abstract}
\maketitle
% ----------------------------------------------------------------
\tableofcontents
\section{Introduction}

The art of using quantum field theory to derive mathematical results often lies in a mysterious transition between infinite dimensional geometry and finite dimensional geometry.  Typically, one starts with a path integral in quantum field theory
$$
   \int_{\mathcal E} e^{{i\over \hbar}S}
$$
which is usually an infinite dimensional monster and difficult to study rigorously. Despite the challenging problem in defining this integral, the simplicity and symmetric beauty of the action $S$ sometimes lead to exact formal computations that could be translated into meaningful and surprising  mathematical results. This happens in the wonderland of many supersymmetric field theories. There the above path integral is often localized effectively to an equivalent integral on a finite dimensional moduli space $\mathcal M\subset \mathcal E$
$$
   \int_{\mathcal E} e^{iS/\hbar}= \int_{\mathcal M} \bracket{-}.
$$
We refer to \cite{pestun2017localization} for a comprehensive review.

We will be mainly interested in $\sigma$-models, which study the space of maps
$$
    \varphi: \Sigma\to X.
$$
For example, when $\Sigma$ is one-dimensional, this describes quantum mechanical models; when $\Sigma$ is two-dimensional, this falls into the regime of string theory. Physical models often ask for understanding the integral over the space of arbitrary maps, which is huge. Nevertheless, we expect to learn novel geometry of the target $X$ from the response to randomly throwing ``stones" $\Sigma$ into it.

In versions of topological $\sigma$-models, the path integral over the space of arbitrary maps will be localized to a finite dimensional subspace. The extreme case is when it is localized to constant maps. The path integral will be captured exactly by an effective theory in the formal neighborhood of constant maps inside the full mapping space. Exact semi-classical approximation in $\hbar\to 0$ allows us to reduce the path integral into a meaningful integral on the moduli space of constant maps, i.e., $X$. For example, in topological quantum mechanics, we find a formal result (when $\Sigma=S^1$)
$$
   \int_{\text{Map}(S^1, X)}(-) \stackrel{\hbar\to 0}{=}\int_X(-)^\prime.
$$
The LHS usually gives a physics presentation of the analytic index of certain elliptic operator. The RHS will end up with integrals of various curvature forms representing the topological index. This is the physics ``derivation" of Atiyah-Singer type index theorem \cite{alvarez1983supersymmetry,friedan1984supersymmetric,witten1982supersymmetry,witten1999index}, where we only need to compute certain one-loop Feynman diagrams as the result of exact semi-classical approximation.

Another well-studied cousin is the two-dimensional topological A-model and B-model \cite{witten1991mirror}. In the topological A-model, it is localized to holomorphic maps from a Riemann surface $\Sigma$ to a complex manifold $X$.  Such localized geometry triggers the fascinating development of Gromov-Witten (GW) theory.  In the topological B-model, it is localized to constant maps, again. Despite the simplicity of this localized data, its geometry is not very well understood. Nevertheless, throwing a sphere into a Calabi-Yau manifold in the B-model and feeling its response around the fluctuation of constant maps, we end up with period map and variation of Hodge structures (VHS). If we throw a higher genus curve, it is expected to give a certain quantization of VHS on Calabi-Yau manifolds. Instead, this picture is fairly understood if we replace $\sigma$-model by a string field theory description for the B-model \cite{bershadsky1994kodaira,costello2012quantum}.  Mirror symmetry links these two models by a version of infinite dimensional Fourier transformation \cite{hori2003mirror}.  After we translate such simple link of two monsters to localized geometry, we end up with the highly unexpected prediction on the mirror equivalence between GW and VHS.

\begin{center}
\vskip -30 pt
\definecolor{ccqqqq}{rgb}{0.8,0,0}
\definecolor{qqqqff}{rgb}{0,0,1}
\begin{tikzpicture}[line cap=round,line join=round,>=triangle 45,x=1cm,y=1cm]
\clip(-5.514319618561127,-4.573416654075767) rectangle (6.582147638072778,3.3950736585167274);
\draw [rotate around={0:(0,0)},line width=2pt] (0,0) ellipse (2.0769431803137888cm and 1.8203551780495901cm);
\draw [line width=3.2pt,color=qqqqff] (-1.6623077069904444,-0.7673989145835366)-- (-1.6623077069904444,-0.7099061994854666)-- (-1.6623077069904444,-0.6179178553285547)-- (-1.6623077069904444,-0.5259295111716428)-- (-1.6508091639708304,-0.46843679607357286)-- (-1.6278120779316025,-0.4224426239951169)-- (-1.6163135349119886,-0.37644845191666093)-- (-1.5933164488727607,-0.34195282285781897)-- (-1.5588208198139186,-0.26146302172052105)-- (-1.5128266477354628,-0.16947467756360915)-- (-1.4553339326373929,-0.054489247367469254)-- (-1.397841217539323,0.02600055376982867)-- (-1.363345588480481,0.07199472584828462)-- (-1.351847045460867,0.10649035490712659)-- (-1.3173514164020252,0.15248452698558254)-- (-1.2943543303627971,0.18698015604442453)-- (-1.2598587013039553,0.20997724208365248)-- (-1.2253630722451134,0.24447287114249447)-- (-1.1908674431862714,0.27896850020133646)-- (-1.1678703571470435,0.3134641292601784)-- (-1.0873805560097456,0.3709568443582483)-- (-1.0183892978920617,0.4169510164367043)-- (-0.9838936688332198,0.4284495594563183)-- (-0.9378994967547639,0.4399481024759323)-- (-0.8229140665586242,0.4399481024759323)-- (-0.7194271793820983,0.4399481024759323)-- (-0.6619344642840285,0.4169510164367043)-- (-0.6044417491859586,0.39395393039747634)-- (-0.5469490340878888,0.3594583013386344)-- (-0.48945631898981884,0.32496267227979236)-- (-0.44346214691136293,0.2904670432209504)-- (-0.397467974832907,0.24447287114249447)-- (-0.36297234577406506,0.20997724208365248)-- (-0.32847671671522316,0.18698015604442453)-- (-0.2939810876563812,0.15248452698558254)-- (-0.2594854585975393,0.12948744094635456)-- (-0.23648837255831134,0.0949918118875126)-- (-0.19049420047985544,0.07199472584828462)-- (-0.13300148538178555,0.014502010750214678)-- (-0.08700731330332963,-0.019993618308627288)-- (-0.029514598205259743,-0.054489247367469254)-- (0.016479573873196166,-0.06598779038708324)-- (0.07397228897126605,-0.08898487642631123)-- (0.13146500406933592,-0.08898487642631123)-- (0.16596063312817785,-0.10048341944592522)-- (0.3499373214420015,-0.10048341944592522)-- (0.4534242086185273,-0.10048341944592522)-- (0.4994183806969832,-0.07748633340669724)-- (0.5684096388146671,-0.06598779038708324)-- (0.625902353912737,-0.04299070434785527)-- (0.6833950690108068,-0.03149216132824128)-- (0.7408877841088767,-0.0084950752890133)-- (0.7983804992069466,0.003003467730600689)-- (0.8328761282657885,0.02600055376982867)-- (0.8673717573246305,0.048997639809056644)-- (0.9018673863834724,0.060496182828670635)-- (0.9363630154423144,0.07199472584828462)-- (1.0053542735599983,0.12948744094635456)-- (1.074345531677682,0.16398307000519655)-- (1.120339703756138,0.2214757851032665)-- (1.1778324188542078,0.26746995718172245)-- (1.2238265909326638,0.32496267227979236)-- (1.2813193060307337,0.3709568443582483)-- (1.3273134781091895,0.4169510164367043)-- (1.3963047362268735,0.47444373153477426)-- (1.4537974513249432,0.5204379036132302)-- (1.4767945373641713,0.5549335326720721)-- (1.4997916234033992,0.5894291617309141)-- (1.534287252462241,0.6239247907897562);
\draw [line width=2pt,color=ccqqqq] (-1.8577829383238822,-0.5489265972108708)-- (-1.8117887662454262,-0.5489265972108708)-- (-1.7657945941669704,-0.5489265972108708)-- (-1.7312989651081283,-0.5604251402304847)-- (-1.6853047930296725,-0.5719236832500988)-- (-1.5818179058531467,-0.5719236832500988)-- (-1.5013281047158489,-0.5719236832500988)-- (-1.4668324756570068,-0.5604251402304847)-- (-1.5013281047158489,-0.5374280541912568)-- (-1.6048149918923746,-0.38794699493627494)-- (-1.6393106209512165,-0.330454279838205)-- (-1.6738062500100586,-0.307457193798977)-- (-1.7083018790689004,-0.28446010775974906)-- (-1.7427975081277423,-0.24996447870090707)-- (-1.7772931371865843,-0.2269673926616791)-- (-1.7427975081277423,-0.2154688496420651)-- (-1.6968033360492865,-0.2154688496420651)-- (-1.6508091639708304,-0.2269673926616791)-- (-1.5818179058531467,-0.24996447870090707)-- (-1.5358237337746907,-0.24996447870090707)-- (-1.5013281047158489,-0.26146302172052105)-- (-1.4668324756570068,-0.28446010775974906)-- (-1.443835389617779,-0.318955736818591)-- (-1.409339760558937,-0.34195282285781897)-- (-1.374844131500095,-0.353451365877433)-- (-1.3403485024412531,-0.34195282285781897)-- (-1.351847045460867,-0.295958650779363)-- (-1.363345588480481,-0.26146302172052105)-- (-1.386342674519709,-0.2269673926616791)-- (-1.397841217539323,-0.19247176360283713)-- (-1.420838303578551,-0.15797613454399517)-- (-1.443835389617779,-0.10048341944592522)-- (-1.4553339326373929,-0.054489247367469254)-- (-1.489829561696235,-0.03149216132824128)-- (-1.5128266477354628,0.003003467730600689)-- (-1.5358237337746907,0.03749909678944266)-- (-1.5703193628335326,0.060496182828670635)-- (-1.5703193628335326,0.10649035490712659)-- (-1.5243251907550768,0.0949918118875126)-- (-1.4783310186766208,0.060496182828670635)-- (-1.420838303578551,0.02600055376982867)-- (-1.386342674519709,0.014502010750214678)-- (-1.351847045460867,0.003003467730600689)-- (-1.3173514164020252,-0.03149216132824128)-- (-1.2828557873431832,-0.054489247367469254)-- (-1.2483601582843413,-0.08898487642631123)-- (-1.2138645292254995,-0.11198196246553921)-- (-1.2138645292254995,-0.06598779038708324)-- (-1.2368616152647274,-0.03149216132824128)-- (-1.2598587013039553,0.02600055376982867)-- (-1.2943543303627971,0.11798889792674058)-- (-1.3058528733824113,0.17548161302481052)-- (-1.3058528733824113,0.2214757851032665)-- (-1.3173514164020252,0.25597141416210845)-- (-1.3173514164020252,0.3019655862405644)-- (-1.3173514164020252,0.3479597583190204)-- (-1.3288499594216392,0.38245538737786233)-- (-1.3058528733824113,0.3479597583190204)-- (-1.2828557873431832,0.3134641292601784)-- (-1.2483601582843413,0.27896850020133646)-- (-1.2138645292254995,0.24447287114249447)-- (-1.1908674431862714,0.20997724208365248)-- (-1.1563718141274295,0.17548161302481052)-- (-1.1333747280882016,0.14098598396596856)-- (-1.0988790990293595,0.11798889792674058)-- (-1.0758820129901316,0.08349326886789861)-- (-1.0528849269509037,0.11798889792674058)-- (-1.0413863839312898,0.15248452698558254)-- (-1.0298878409116758,0.1984786990640385)-- (-1.0298878409116758,0.4399481024759323)-- (-1.0298878409116758,0.5204379036132302)-- (-1.0413863839312898,0.5549335326720721)-- (-1.0413863839312898,0.6009277047505281)-- (-1.0413863839312898,0.646921876828984)-- (-1.0183892978920617,0.6124262477701421)-- (-0.9953922118528338,0.5779306187113001)-- (-0.9838936688332198,0.5434349896524582)-- (-0.9608965827939919,0.5089393605936162)-- (-0.9493980397743779,0.47444373153477426)-- (-0.9378994967547639,0.4399481024759323)-- (-0.914902410715536,0.4054524734170903)-- (-0.891905324676308,0.3709568443582483)-- (-0.857409695617466,0.3479597583190204)-- (-0.8459111525978521,0.3134641292601784)-- (-0.8459111525978521,0.26746995718172245)-- (-0.8344126095782382,0.23297432812288046)-- (-0.7884184374997822,0.23297432812288046)-- (-0.7654213514605542,0.26746995718172245)-- (-0.7539228084409403,0.32496267227979236)-- (-0.7309257224017124,0.3709568443582483)-- (-0.7079286363624844,0.4054524734170903)-- (-0.7079286363624844,0.45144664549554625)-- (-0.7079286363624844,0.4974408175740022)-- (-0.6964300933428704,0.5319364466328442)-- (-0.6849315503232565,0.5664320756916862)-- (-0.6619344642840285,0.5319364466328442)-- (-0.6619344642840285,0.46294518851516026)-- (-0.6619344642840285,0.39395393039747634)-- (-0.6504359212644145,0.3594583013386344)-- (-0.6504359212644145,0.3134641292601784)-- (-0.6389373782448006,0.27896850020133646)-- (-0.6389373782448006,0.23297432812288046)-- (-0.6274388352251865,0.1984786990640385)-- (-0.5929432061663447,0.17548161302481052)-- (-0.5699461201271167,0.14098598396596856)-- (-0.5354504910682747,0.17548161302481052)-- (-0.48945631898981884,0.25597141416210845)-- (-0.44346214691136293,0.3134641292601784)-- (-0.420465060872135,0.3479597583190204)-- (-0.385969431813293,0.3594583013386344)-- (-0.3514738027544511,0.38245538737786233)-- (-0.32847671671522316,0.4169510164367043)-- (-0.31697817369560916,0.38245538737786233)-- (-0.31697817369560916,0.10649035490712659)-- (-0.31697817369560916,0.014502010750214678)-- (-0.32847671671522316,-0.019993618308627288)-- (-0.32847671671522316,-0.06598779038708324)-- (-0.2939810876563812,-0.054489247367469254)-- (-0.28248254463676725,-0.019993618308627288)-- (-0.24798691557792532,0.014502010750214678)-- (-0.23648837255831134,0.048997639809056644)-- (-0.2019927434994694,0.07199472584828462)-- (-0.16749711444062748,0.0949918118875126)-- (-0.13300148538178555,0.12948744094635456)-- (-0.11000439934255758,0.16398307000519655)-- (-0.07550877028371565,0.18698015604442453)-- (-0.0525116842444877,0.2214757851032665)-- (-0.018016055185645766,0.20997724208365248)-- (-0.0065175121660317895,0.16398307000519655)-- (0.016479573873196166,0.060496182828670635)-- (0.08547083199088003,-0.14647759152438117)-- (0.10846791803010798,-0.2154688496420651)-- (0.10846791803010798,-0.26146302172052105)-- (0.11996646104972196,-0.295958650779363)-- (0.13146500406933592,-0.26146302172052105)-- (0.13146500406933592,-0.2154688496420651)-- (0.13146500406933592,-0.16947467756360915)-- (0.1429635470889499,-0.11198196246553921)-- (0.16596063312817785,-0.054489247367469254)-- (0.16596063312817785,-0.0084950752890133)-- (0.17745917614779183,0.02600055376982867)-- (0.1889577191674058,0.060496182828670635)-- (0.1889577191674058,0.10649035490712659)-- (0.21195480520663376,0.07199472584828462)-- (0.22345334822624774,0.02600055376982867)-- (0.2464504342654757,-0.0084950752890133)-- (0.2579489772850897,-0.04299070434785527)-- (0.26944752030470365,-0.07748633340669724)-- (0.26944752030470365,-0.12348050548515319)-- (0.26944752030470365,-0.16947467756360915)-- (0.28094606332431765,-0.2154688496420651)-- (0.3039431493635456,-0.27296156474013505)-- (0.3499373214420015,-0.27296156474013505)-- (0.36143586446161546,-0.2384659356812931)-- (0.3959314935204574,-0.18097322058322313)-- (0.4304271225792993,-0.12348050548515319)-- (0.44192566559891333,-0.06598779038708324)-- (0.4649227516381413,-0.019993618308627288)-- (0.47642129465775523,0.03749909678944266)-- (0.4994183806969832,0.08349326886789861)-- (0.4994183806969832,0.12948744094635456)-- (0.5224154667362112,0.0949918118875126)-- (0.5339140097558251,0.060496182828670635)-- (0.556911095795053,0.02600055376982867)-- (0.5684096388146671,-0.0084950752890133)-- (0.591406724853895,-0.04299070434785527)-- (0.602905267873509,-0.07748633340669724)-- (0.614403810893123,-0.11198196246553921)-- (0.6374008969323509,-0.14647759152438117)-- (0.6488994399519649,-0.19247176360283713)-- (0.6718965259911929,-0.24996447870090707)-- (0.6718965259911929,-0.2039703066224511)-- (0.6718965259911929,-0.14647759152438117)-- (0.6833950690108068,-0.11198196246553921)-- (0.6833950690108068,-0.06598779038708324)-- (0.7063921550500348,-0.03149216132824128)-- (0.7178906980696488,0.003003467730600689)-- (0.7293892410892627,0.03749909678944266)-- (0.7293892410892627,0.08349326886789861)-- (0.7408877841088767,0.14098598396596856)-- (0.7638848701481047,0.1984786990640385)-- (0.8098790422265606,0.18698015604442453)-- (0.8213775852461745,0.15248452698558254)-- (0.8558732143050165,0.11798889792674058)-- (0.8903688433638585,0.08349326886789861)-- (0.9018673863834724,0.048997639809056644)-- (0.9133659294030864,0.014502010750214678)-- (0.9133659294030864,-0.03149216132824128)-- (0.9248644724227003,-0.07748633340669724)-- (0.9478615584619283,-0.1349790485047672)-- (0.9593601014815423,-0.10048341944592522)-- (0.9593601014815423,0.15248452698558254)-- (0.9593601014815423,0.23297432812288046)-- (0.9708586445011562,0.26746995718172245)-- (0.9708586445011562,0.3134641292601784)-- (0.9823571875207703,0.3479597583190204)-- (0.9823571875207703,0.39395393039747634)-- (1.0053542735599983,0.4284495594563183)-- (1.0398499026188401,0.4399481024759323)-- (1.074345531677682,0.4284495594563183)-- (1.09734261771691,0.39395393039747634)-- (1.120339703756138,0.3594583013386344)-- (1.131838246775752,0.32496267227979236)-- (1.1663338758345938,0.2904670432209504)-- (1.189330961873822,0.25597141416210845)-- (1.2468236769718917,0.15248452698558254)-- (1.2928178490503477,0.08349326886789861)-- (1.3158149350895756,0.11798889792674058)-- (1.3158149350895756,0.45144664549554625)-- (1.3158149350895756,0.5319364466328442)-- (1.3043163920699616,0.6009277047505281)-- (1.3043163920699616,0.6699189628682121)-- (1.3273134781091895,0.6354233338093701)-- (1.3618091071680314,0.6009277047505281)-- (1.3848061932072595,0.5664320756916862)-- (1.4308003652857153,0.5319364466328442)-- (1.4882930803837853,0.4974408175740022)-- (1.4997916234033992,0.46294518851516026)-- (1.5112901664230132,0.4284495594563183)-- (1.534287252462241,0.39395393039747634)-- (1.5572843385014692,0.3594583013386344)-- (1.591779967560311,0.33646121529940637);
\draw (0.4994183806969832,1.716286377653085) node[anchor=north west] {\Large$\mathbf{\mathcal E}$};
\draw (1.0398499026188401,0.13646121529940637) node[anchor=north west] {${\color{blue}\mathcal M}$};
\draw (-1.0758820129901316,1.3013009474569453) node[anchor=north west] {${\color{red}\widehat{\mathcal M}}$};
\end{tikzpicture}
\end{center}
\vskip -70 pt

Let  ${\widehat{\mathcal M}}$ be the {formal neighborhood} of $\mathcal M$ inside $\mathcal E$. The main idea in this paper is to study the effective theory on $\widehat{\mathcal M}$ that would sit in between the original physics and the localized integral
$$
   \int_{\mathcal E} e^{iS/\hbar}"="  {\int_{\widehat{\mathcal M}} e^{iS^{eff}/\hbar}}= \int_{\mathcal M} \bracket{-}.
$$
The pair ${(\widehat{\mathcal M},S^{eff})}$ will be called the \emph{localized effective theory}.

In most physics applications, the effective theory in the small neighborhood of the localized geometry has complicated quantum corrections and its precise form  is not much used.  Once the exact semi-classical approximation is available, we can  derive an explicit quantity on the localized moduli itself after integrating out the small quantum fluctuations around it, without knowing much detailed information. Miraculous cancellation happens along integration, usually by supersymmetry.

\noindent \textbf{Q}: How useful is the effective theory around the localized geometry? Can we actually use it to prove interesting mathematical results along this conceptually clean physics journey, in other words,  to turn this beautiful ``physics argument" of \emph{exact semi-classical approximation} into actual theorems?

This question is the main motivation of the current work. Such effective theory sits in between the full infinite dimensional geometry and the localized finite dimensional geometry, like a bridge. To start with, we will present in this paper a geometric way to study the effective theory in the formal neighborhood of constant maps. Our main targets will be to understand index type problems (the usual Atiyah-Singer type index as well as many other unknown index from higher dimensional worldsheet), in terms of a rigorous formulation of the method of exact semi-classical approximation.

We show that this effective theory has an \emph{exact} description in terms of geometry and captures many useful informations about the infinite dimensional monster. We illustrate this  in this paper by a detailed study of the example of topological quantum mechanics. The effective interaction at all loops is exactly identified \cite{BVQandindex} with Fedosov's flat connection \cite{fedosov1994} on the Weyl bundle. Using this, we explain how to implement the idea of exact semi-classical approximation into a proof of the algebraic index theorem. The proof resembles much of the physics derivation of Atiyah-Singer index theorem described above and clarifies the geometric face of many other mathematical constructions.

The rest of the introduction is mainly about a journey which explains the key idea and the physics model behind the main construction in this paper. It is independent from the remainder of
the paper and can be skipped by readers less interested in motivations by physics, and not familiar with quantum field theory. Nevertheless, we feel it helpful to explain this story since it would illustrate how to generalize the construction to many other models, including the two dimensional $\sigma$-models that have attracted most interests in the last few decades. The  mathematical exposition starts from Section \ref{section:formal},  and will be presented with precise definitions in a purely finite dimensional context.

\subsection{The geometry of localized effective theory}

Let us now explain the basic idea/philosophy of our approach to the effective theory around the localized geometry of constant maps.

\vskip 10pt

\noindent \textbf{(1) Local model}. Let us assume that $X$ is locally modeled by a geometry $T^{Model}$ that will be relevant for our $\sigma$-model. $X$ is built up from gluing pieces of $T^{Model}$ via its automorphism groups $\Aut(T^{Model})$. For example, in quantum mechanics, $X$ is a symplectic manifold and locally modeled (via the choice of Darboux coordinates) by
the standard linear symplectic structure $
T^{Model}=(\R^{2n},\omega).
$

 Intuitively, a constant map specifies a point on the target $X$. Small quantum fluctuations around this map produce  certain geometric structures that depend on $\Sigma$ and the local model $T^{Model}$ (and of course, on the $\sigma$-model itself). Let us vaguely call it $\mathcal F^\hbar(\Sigma, T^{Model})$, where $\hbar$ is the quantization parameter.  It exists locally around any point in $X$.

 Usually, the $\sigma$-model action has interaction terms that depend on the geometry of $X$. It often becomes free when $X$ is flat and we assume this is the case. The interplay between the interacting data and the gluing data on the target will play an important role later. In this case,  $\mathcal F^\hbar(\Sigma, T^{Model})$ captures structures of a free quantum field theory.  A free theory is not trivial, it allows rich algebraic structures of observables that we can put on $\Sigma$. As developed systematically in \cite{Costello_2011,Factorization1}, they form a factorization algebra on $\Sigma$. This fully characterizes what is happening locally on $X$.

\vskip 20pt

\noindent \textbf{(2) Gluing}. If the theory is consistent at the quantum level, then the existence of effective theory tells us that we can glue $\mathcal F^\hbar(\Sigma, T^{Model})$ consistently to form a sheaf on $X$. The gluing is represented by how $\Aut(T^{Model})$ acts on $\mathcal F^\hbar(\Sigma, T^{Model})$. This can be implemented by the method of coupling with background symmetry, and is exactly how the interaction terms will be introduced.

Let us illustrate this by a concrete topological quantum mechanics example following \cite{BVQandindex}. It studies maps from $S^1$ to a symplectic manifold $(X, \omega)$.  At each point $p\in X$, the tangent plane $T_pX$ is a linear symplectic space. Quantum fluctuations deform the algebra of functions on $T_pX$ to the associated Weyl algebra. These pointwise Weyl algebras form a vector bundle, the Weyl bundle $\W$ on $X$. Fedosov \cite{fedosov1994} shows $\W$ carries a flat connection of the form $D=\nabla+{1\over \hbar}[\gamma, -]$ where $\nabla$ is a symplectic connection, and $\gamma$ is a $\W$-valued 1-form. As shown in \cite{BVQandindex}, the connection $\gamma$ precisely produces the interaction term in the effective topological quantum mechanical model. The flatness of $D$ has the interpretation of quantum master equation, which is a quantum gauge consistency condition for gluing. Quantum observables are given by $D$-flat sections \cite{fedosov1994}. These are objects that can be consistently glued on $X$, leading to a deformation quantization of the Poisson algebra $C^\infty(X)$.

Typically, the gluing symmetry will be represented by a \HC pair $(\g, K)$. $K$ represents the linearized symmetry that allows us to talk about $K$-compatible connections. It sets the kind of geometry we study. $\g$ is the Lie algebra of the full  (formal) non-linear symmetries. In the topological quantum mechanics example above, $K$ is the symplectic group, and $\g$ is the associated Lie algebra of the Weyl algebra. The geometry of $X$ itself that reflects the outcome of  gluing local pieces will lead to (see Section \ref{section:formal} for a detailed discussion) a principal $K$-bundle $P$
$$
P\to X
$$
together with a flat $(\g,K)$-connection $A$. $(\g,K)$-connection generalizes the notation of connection on $P$, by allowing it to take values in the bigger Lie algebra $\g$.  Flatness means the curvature $dA+{1\over 2}[A,A]=0$ vanishes. Chern-Weil type formalism says we can transfer Lie algebraic constructions to geometric objects on $X$. This leads to the descent map (see Section \ref{desc})
$$
\desc: C^\*(\g, K; -)\to \Omega^\*(X, P\times_K -)
$$
which sends the Lie algebra cochain complex for a $(\g,K)$-module to the de Rham complex of the associated flat vector bundle. It is compatible with differentials and descents to cohomologies.

The complex $C^\*(\g, K; -)$ is our algebraic framework of describing how to glue objects under the
local symmetry transformation. A consistent gluing will end up with a cocycle in this complex.

Let us explain this point. Let $(V, \delta)$ be some BRST complex constructed from the local model $\mathcal F^\hbar(\Sigma, T^{Model})$. Assume $V$ carries a structure of $(\g,K)$-module, telling how it is transferred under a gluing symmetry. We are interested in constructing some element $\alpha_0\in V$ that is BRST closed $\delta\alpha_0=0$ and can be glued globally on $X$. This naive requirement is unrealistic unless $\alpha_0$ is stable under the $\g$-action. However, if the choice of $\alpha_0$ is natural, the gluing transformation will modify it by BRST exact terms. Therefore we are led to look for $\alpha_1\in C^1(\g,K;V)$ such that
$$
   \pa_{\Lie}\alpha_0+\delta \alpha_1=0.
$$
Here $\pa_{\Lie}$ is the \CE differential. This equation tells exactly how to correct $\alpha_0$ by BRST exact term under the gluing transformation. Iterating this process, we need to find
$$
\alpha= \alpha_0+\alpha_1+\cdots, \quad \alpha_i\in C^i(\g,K;V)
$$
satisfying
$$
  (\pa_{\Lie}+\delta) \alpha=0.
$$
Such $\alpha$ can be glued globally on $X$, by the method of descent  described above.

This set-up follows closely the philosophy of Gelfand-Kazhdan formal geometry. Our presentation here follows Costello's idea \cite{costello2010geometric} on using the factorization/BV machine to bridge the algebra and the path integral in the formal neighborhood of constant maps. This is used in \cite{Grady:2011jc} to study the effective theory of topological quantum mechanics when the target is a cotangent bundle, and generalized in \cite{BVQandindex} to arbitrary symplectic targets to understand quantum corrections at all loops. As another solid example, \cite{gorbounov2016chiral} studies the gluing of BV quantization of $\beta-\gamma$ system  in this framework.  There the Witten genus appears as a universal object computed in the local model coupled with background symmetry, which descents to the geometric construction described in \cite{costello2010geometric}. Such gluing can be viewed as the analogue of Fedosov connection for a vertex algebra bundle on the target, which is equivalent to the quantum master equation by the method described in \cite{Li:2016gcb}.

\noindent \textbf{(3) Exact semi-classical approximation}.  Armed with the above local story and gluing,  we now come to the main message in this paper on  the strategy to implement the method of exact semi-classical approximation from the viewpoint of localized effective theory. This will allow us to prove interesting results by exactly the same way as what physicists would do.

So far we have been talking about algebraic structures. Another important object is the expectation value map, or correlation function
$$
\abracket{-}: (\Obs^\hbar_\Sigma, \delta) \to \C\LS{\hbar}.
$$
Here $\Obs^\hbar_\Sigma$ is a suitable version of quantum observables on $\Sigma$, and $\delta$ plays the role of BRST differential. Mathematically, the chain complex $(\Obs^\hbar_\Sigma, \delta)$ models the factorization homology of the algebra of local observables on $\Sigma$ (see \cite{Factorization1} for a precise formulation in perturbation theory). A consistent theory will imply that $\abracket{-}$ is a cochain map, hence it descents to an evaluation map on $\delta$-cohomologies. Here we view the coefficient ring $\C\LS{\hbar}$ as a complex with zero differential. In general, $\abracket{-}$ could be valued in some differential graded algebra.

We are mainly interested in calculating expectation values of observables. This is usually very difficult. Physics principle tells us that we should explore its dependence on $\hbar$. Thinking about $\hbar$ as a deformation parameter for a family of algebraic structures,  we expect to have a version of Gauss-Manin connection  available to compute $\hbar$-variations. For example, for a family of (homotopy) associative algebras, we have Getzler's Gauss-Manin connection \cite{getzler1993cartan} on the periodic cyclic homologies. In good situations, we can find nice quantities that are invariant under $\hbar$-variations. In this case, we can take $\hbar\to 0$ and the semi-classical approximation becomes exact. Standard technique in quantum field theory says that this can be computed by one-loop Feynman diagrams. As we will see by a concrete example, this turns intuitive physics reasoning into interesting mathematical proof.

In some special examples of field theory, the full quantization is one-loop exact by nature. BF theory is of this type, as well as its various chiral and homotopic generalizations, for example \cite{costello2010geometric,gorbounov2016chiral,BVQandindex,Grady:2011jc,williams2018renormalization}. In such case, higher loops are forbidden and the above analysis is greatly simplified.

\subsection{A case study of topological quantum mechanics} The main body of this paper is to carry out the above idea through the concrete example of topological quantum mechanics. An an application, we give an explicit construction of the universal trace map on periodic cyclic tensors of matrix Weyl algebras. This leads to a simple geometric proof of the universal algebraic index theorem via one-loop Feynman diagram computations, so physics works.

One way to formulate topological quantum mechanics is to apply AKSZ \cite{Alexandrov_1997} construction to maps
$$
       S^1_{dR}\to (X, \omega).
$$
Here $(X, \omega)$ is a symplectic manifold. $S^1_{dR}$ is the supermanifold with underlying topology $S^1$ and with structure ring the sheaf of de Rham complex $\Omega_{S^1}^\bullet$ on $S^1$.

Motivated by the discussion above, we first study the local model
$$
S^1_{dR}\to \R^{2n}, \quad \omega=\sum_{i=1}^n dp^i \wedge d q^i.
$$
Here $p^1,...,p^n,q^1,...,q^n$ are linear coordinates on $\R^{2n}$. The fields can be organized into superfields
$$
   \mathbb{P}^i(t)=p^i(t)+\eta^i(t)dt,\quad \mathbb{Q}^i(t)=q^i(t)+\xi^i(t)dt.
$$
Here $q^i(t), p^i(t)$ are bosons, and their anti-fields $ \eta^i(t), \xi^i(t)$ are fermions. $t$ is the coordinate on $S^1$.

The action takes the form
$$
S=\sum_{i}\int_{S^1} \mathbb{P}^i d\Q^i= \sum_{i}\int_{S^1} p^i dq^i.
$$
The nontrivial BRST transformation is
$$
\delta \eta^i=\pa_t p^i, \quad \delta \xi^i=\pa_t q^i.
$$

The BRST cohomology of local observables at a point $t\in S^1$ is generated by $p^i(t), q^i(t)$. The theory is topological on $S^1$, since $t$-variation of $p^i(t), q^i(t)$'s are BRST exact. The fusion of local observables at the quantum level turns out to be the Moyal-Weyl product. Therefore we can identify local observables with the Weyl algebra  $\mathcal{W}_{2n}$ (we use a formally completed version, see Section \ref{Weyl}). As a general feature of topological field theory \cite{witten1991introduction}, we can construct non-local observables by topological descent. Let $\OO^{(0)}(t)$ be a local observable representing a BRST class.  Its topological descendent is a 1-form valued observable $\OO^{(1)}(t) dt$ such that
$$
\pa_t \OO^{(0)}(t)+ \delta \OO^{(1)}(t)=0.
$$

Explicitly in this example, given $\OO(p^i,q^i) \in \mathcal{W}_{2n}$, we consider the following observable $\OO(\mathbb{P}^i,\mathbb{Q}^i)$ which is valued in $\Omega_{S^1}^\bullet$. If we decompose it into $0$-form and $1$-form on $S^1$
$$
\OO(\mathbb{P}^i,\mathbb{Q}^i)=\OO^{(0)}+  \OO^{(1)}dt,
$$
then $\OO^{(1)}$ is the topological descent of $\OO^{(0)}$.

In applications, we are also interested in coupling this system with a rank $r$ vector bundle $E$ on $X$. In the local model, this will allow local observables to be matrix valued. In this general setting, we will be interested in the expectation value of the following observable in our free theory (which will be defined in Section 3 Definition \ref{defn-free-expectation})
$$
\abracket{ \int_{t_0=0<t_1<\cdots< t_m<1} dt_1\cdots dt_m \OO^{(0)}_0(t_0) \OO_1^{(1)}(t_1) \cdots \OO_m^{(1)}(t_m) }_{free}, \quad \OO_i\in \gl_r(\mathcal{W}_{2n}).
$$

\begin{figure}[h]

\definecolor{rvwvcq}{rgb}{0.08235294117647059,0.396078431372549,0.7529411764705882}
\begin{tikzpicture}[line cap=round,line join=round,>=triangle 45,x=1cm,y=1cm, scale=0.8,transform shape]
\clip(-6.87,-3.54) rectangle (7.47,4.3);
\draw [line width=2pt] (-0.01,0) circle (2.285607140345865cm);
\draw (-0.45,3.3) node[anchor=north west] {{$\mathcal{O}^{(0)}_0(t_0)$}};
\draw (-4.45,0.18) node[anchor=north west] {\Huge{$\int_{}$}};
\draw (1.83,2.6) node[anchor=north west] {{$\mathcal{O}^{(1)}_m(t_m)dt_m$}};
\draw (2.63,0.94) node[anchor=north west] {{}};
\draw (-4,2.62) node[anchor=north west] {{$\mathcal{O}^{(1)}_1(t_1)dt_1$}};
\draw (-3.29,0.86) node[anchor=north west] {{}};
\draw (-5.2,-1) node[anchor=north west] {{\tiny${t_0=0<t_1<\cdots< t_m<1} $}};
\draw (2.27,-0.28) node[anchor=north west] {{$\cdot\cdot\cdot$}};
\draw (2.05,-1.18) node[anchor=north west] {{$\mathcal{O}^{(1)}_k(t_k)dt_k$}};
\draw (-0.13,-2.26) node[anchor=north west] {{}};
\draw (-2.41,-1.54) node[anchor=north west] {{}};
\begin{scriptsize}
\draw [fill=rvwvcq] (-2.29,0.16) circle (2.5pt);
\draw [fill=rvwvcq] (-0.1476741265598887,2.281456954421006) circle (2.5pt);
\draw [fill=rvwvcq] (-1.7250258764177986,1.5108561292252032) circle (2.5pt);
\draw [fill=rvwvcq] (1.4922446916995062,1.7225739131487674) circle (2.5pt);
\draw [fill=rvwvcq] (2.25417072637637,0.31229941053467203) circle (2.5pt);
\draw [fill=rvwvcq] (1.8207956964465248,-1.3682788889231925) circle (2.5pt);
\draw [fill=rvwvcq] (0.2272102048344094,-2.273264462996424) circle (2.5pt);
\draw [fill=rvwvcq] (-1.5107707124277856,-1.7238582507616451) circle (2.5pt);
\end{scriptsize}
\end{tikzpicture}

  \caption{}\label{Freeobservable}
\end{figure}

We have put $\OO_0$ at a fixed point. This is related to the usual gauge fixing of the extra $S^1$-rotation symmetry. For simplicity, we will just write the above expression as
$$
\abracket{\OO_0\otimes \OO_1\otimes \cdots \otimes \OO_m}_{free}.
$$
This tensor notation is to match with cyclic chain convention below. This expectation value can be rigorously defined using Feynman diagrams, where the propagator is given by the integral kernel of the inversion of the operator ${d\over dt}$ on $S^1$.  See Definition \ref{defn-free-expectation} for the precise formula.

We could also consider the more familiar expression
$
\abracket{ \OO^{(0)}_0(t_0) \int_{S^1}dt_1\OO_1^{(1)}(t_1) \cdots \int_{S^1}dt_m\OO_m^{(1)}(t_m) }_{free}
$.
This is related to the above correlation by
$$
\abracket{ \OO^{(0)}_0(t_0) \int_{S^1}dt_1\OO_1^{(1)}(t_1) \cdots \int_{S^1}dt_m\OO_m^{(1)}(t_m) }_{free}=\sum_{\sigma\in S_m} \pm \abracket{\OO_0\otimes \OO_{\sigma(1)}\otimes \cdots \otimes \OO_{\sigma(m)}}_{free}.
$$
The LHS is related to the study of Lie algebra chains, and terms in the RHS are related to the study of cyclic chains. We will focus on the cyclic case which is relevant for the universal trace and index.

This model has zero modes,  given by
$$
  H^\bullet(S^1)\otimes \R^{2n}.
$$
These are the configurations when $p^i, \eta^i, q^i, \xi^i$ are constants. The algebra of functions on $H^\bullet(S^1)\otimes \R^{2n}$ is the de Rham forms on $\R^{2n}$, denoted by $\hOmega^{-\*}_{2n}$ (we work with a formal version, see definition \ref{de Rham}). Our expectation $\abracket{\OO_0\otimes \OO_1\otimes \cdots \otimes \OO_m}_{free}$ should be viewed as valued in $\hOmega^{-\*}_{2n}$. This gives
$$
\abracket{-}_{free}: C_{-\bullet}(\gl_r(\mathcal{W}_{2n}))\to \hOmega^{-\*}_{2n}
$$
from cyclic tensors of $\gl_r(\mathcal{W}_{2n})$ to $\hOmega^{-\*}_{2n}$. See Convention below for the definition of cyclic tensors and related homological operators.

This model comes from a BV theory, so the zero modes carry a BV structure.  Let $\Pi$ denote the Poisson tensor. Then the BV operator $\Delta$ acting on $\hOmega^{-\*}_{2n}$ is given by the Lie derivative $\Delta=\mathcal L_{\Pi}$.

The crucial property of $\abracket{-}_{free}$ is that it intertwines the Connes operator $B$ with the de Rham differential $\d$ on $\hOmega^{-\*}_{2n}$, and intertwines the Hochschild differential $b$ with the BV operator $\Delta$
$$
\abracket{B(-)}_{free}=\d\abracket{-}_{free}, \quad \abracket{b(-)}_{free}=\hbar \Delta\abracket{-}_{free}.
$$
This is nothing but an explicit construction of the deformed HKR map between the noncommutative differential forms of the Weyl algebra and the commutative counterpart.

We can further integrate out zero modes by performing Berezin integration over the fermionic BV lagrangian $H^1(S^1)\otimes \R^{2n}\into H^\bullet(S^1)\otimes \R^{2n}$. This will allow us to land on actual numbers $\C\LS{\hbar}$. This is most elegantly described by working with periodic cyclic tensors as follows. Let us $\C[u,u^{-1}]$-linearly extend the expectation map $\abracket{-}_{free}$ to
$$
\abracket{-}_{free}: CC^{per}_{-\bullet}(\gl_r(\mathcal{W}_{2n}))\to \hOmega^{-\*}_{2n}[u,u^{-1}].
$$
Here $u$ is a variable of degree $2$ representing the cyclic parameter.  Then $\abracket{-}_{free}$ intertwines
$$
(\hbar \Delta+ u\d)\abracket{-}_{free}=\abracket{(b+u B)(-)}_{free}.
$$
The composition of $\abracket{-}_{free}$ with the Berezin integration (see Definition \ref{BV-integration})
$$
\int_{BV}: \hOmega^{-\*}_{2n}[u,u^{-1}]\to \C\LS{\hbar}[u,u^{-1}]
$$
leads to a trace map
$$
\Tr=\int_{BV}\circ \abracket{-}_{free}: CC^{per}_{-\bullet}(\gl_r(\mathcal{W}_{2n}))\to \C\LS{\hbar}[u,u^{-1}].
$$
By construction, $\Tr ((b+uB)(-))=0$ and descents to periodic cyclic homologies. This is basically the same trace map that Feigin-Felder-Shoikhet \cite{feigin2005hochschild} used in their study of algebraic index.

This completes the description of our local model. The next step is to do gluing and our goal is to glue the trace map $\Tr$. The relevant \HC pair $(\g, K)$ turns out to be (see Section \ref{sec: GF})
$$
\mathfrak{g}=\mathcal{W}^+_{2n}\cdot\text{Id}+\hbar\gl_r(\mathcal{W}^+_{2n}), \quad K=\Sp_{2n}\times \GL_r.
$$
$\Sp_{2n}$ indicates an underlying symplectic geometry, and $\GL_r$ indicates a rank $r$ vector bundle.

Taking into account of the central extensions (see Section \ref{sec: GF}), it turns out that the relevant gluing construction is modeled by the following Lie algebra cochain complex
$$
  C^\bullet(\g,\mathfrak{h}; -).
$$
Here $\mathfrak{h}=\Lie(K)+ Z(\g)$ and $Z(\g)$ is the center of $\g$.

As we briefly mentioned above, the gluing data can be equivalently described by turning on interactions. It turns out that the relevant interaction is very simple from this viewpoint: it is the identity map $\g\to \g$, viewed as a 1-cochain $\widehat\Theta\in C^1_{\Lie}(\g;\g)$.

Intuitively, the interaction represents a flat connection that will be used for gluing in geometry. $\widehat\Theta$ indeed can be viewed as a universal flat connection as follows.  Let $(C_{\Lie}^\bullet(\mathfrak{g}),\pa)$ be the \CE complex and $M$ be an arbitrary $\g$-module.  We consider the tensor product
$$
C^\bullet_{\Lie}(\g)\otimes M.
$$
It carries a differential $\pa$ which acts on the $C^\bullet_{\Lie}(\g)$-factor. Let us view $\widehat\Theta\in C^1_{\Lie}(\g)\otimes \g$, so it defines
$$
\widehat\Theta: C^\bullet_{\Lie}(\g)\otimes M \to C^{\bullet+1}_{\Lie}(\g)\otimes M
$$
by combining the wedge product on the $C^1_{\Lie}(\g)$-factor and the application of $\g$-factor to $M$. Then  $\pa+\widehat\Theta$ defines a differential, which is precisely the \CE differential under the vector space identification $C^\bullet_{\Lie}(\g;M) \iso C^\bullet_{\Lie}(\g)\otimes M$. In this sense, $\widehat\Theta$ is the universal flat connection.

Guided by this universal property, we can form an expectation map $\abracket{-}_{int}$ by inserting $\widehat\Theta$ as the interaction to our free theory. Here we further view $\widehat\Theta$ as an element of $C^1_{\Lie}(\g;\gl_r(\W_{2n}))$ using the embedding $\g\into \gl_r(\W_{2n})$, hence it is valued in local observables. We then add the following ($\g^*$-valued) interaction term into our free theory
$$
   \int_{S^1} \widehat\Theta(\mathbb{P}^i,\mathbb{Q}^i).
$$

Intuitively, the expectation map in our interacting theory would be
$$
\abracket{\OO_0\otimes \OO_1\otimes \cdots \otimes \OO_m}_{int}
"="\abracket{ \int_{t_0=0<t_1<\cdots< t_m<1} dt_1\cdots dt_m \OO^{(0)}_0(t_0) \OO_1^{(1)}(t_1) \cdots \OO_m^{(1)}(t_m)  e^{{1\over \hbar}\int_{S^1}\hat \Theta}}_{free}
$$
However, since we work with matrices, we need to modify this formula to take care of the order of integration on $S^1$. The precise expression is Definition \ref{defn:interaction}. This leads to the gluing of trace map
$$
\uTr := \int_{BV}\circ \abracket{-}_{int}
$$
which turns out to lie in the correct Lie algebra cochain
$$
\uTr  \in C^\*(\mathfrak{g},\mathfrak{h};\Hom_{\mathbb{K}}(CC^{per}_{-\bullet}(\gl_r(\mathcal{W}_{2n})),\mathbb{K})),  \quad \mathbb{K}=\C\LS{\hbar}[u,u^{-1}]
$$
and satisfies the cocycle condition
$$
 (\pa_{\Lie}+ b+ uB) \uTr=0.
$$
This is our explicit construction of the universal trace map formulated in \cite{bressler2002riemannI, bressler2002riemannII}.

The universal index arises by computing its value on the cyclic tensor $1$ (i.e. the partition function)
$$
\text{Index}=\uTr(1) \in H^\bullet(\g,\mathfrak{h};\mathbb{K}).
$$
With the underlying topological field theory at hands, this index can be computed exactly as what physicists would do. It is invariant under $\hbar$-variation (up to a well-controlled central term), and so computed via one-loop Feynman diagrams by the exact semi-classical approximation $\hbar\to 0$. This proves the universal algebraic index theorem \cite{feigin1989riemann} (see Theorem \ref{index thm} for a precise statement in our context), which expresses the universal partition function ``$\uTr(1)$" in terms of a universal Lie algebra cohomology. By the descent construction, this universal index implies the usual algebraic index theorem \cite{Fedosov:1996fu,fedosov-bundle,nest1995} for a rank $r$ bundle $E$ on $X$
$$
\uTr(1)=\int_X e^{-\omega_{\hbar}/\hbar}(\hat{A}(X)\cdot Ch(E))\in H^{\bullet}(X,\mathbb{C}\LS{\hbar}).
$$
 $\Tr$ is now the normalized trace operator on the quantum deformation of the algebra $\Gamma(X, \End(E))$, and $\omega_{\hbar}$ is the 2-form  characteristic class of the deformation (see Section \ref{sec:Fedosov}).

This topological quantum mechanics example presented along the idea of localized effective theory is closely related to the works of Nest-Tsygan \cite{nest1995}, Feigin-Felder-Shoikhet \cite{feigin2005hochschild}, Willwacher \cite{WILLWACHER2015277}, Cattaneo-Felder-Willwacher \cite{CATTANEO20111966} and Pflaum-Posthuma-Tang \cite{PFLAUM20101958}. It can be viewed as a quantum field theoretic interpretation of them. The propagator of this example is used in \cite{feigin2005hochschild},  and our expectation value map in the free theory coincides with their trace map.  To derive the algebraic index, they use the relation between Lie algebra cohomology and Hochschild homology to reduce the computation to tree and one loop diagram. In \cite{WILLWACHER2015277} the author extended the result \cite{feigin2005hochschild} to cyclic complex, and use the method in \cite{feigin2005hochschild} to reduce the computation of index to certain Lie subalgebra of the matrix valued Weyl algebra. In \cite{PFLAUM20101958} the authors also work with cyclic complex and they use large N techniques to reduce the computations of Lie algebra cohomology to invariant polynomials. Our method is different since we directly work in periodic cyclic complex and  show that the higher loop contribution in the Feynman diagram is exact with respect to the Connes's operator $B$. The cochain map in \cite[Section 5.2]{WILLWACHER2015277} can be recovered by our universal trace using the descent method. This leads to a simple calculation of the algebraic index and is closer to the physics argument. In particular, we do not need to compute the Lie algebra cohomology or use the large N technique, but in fact be able to compute the index manifestly from our explicit cocycle. An algebraic version of such semi-classical analysis of the periodic cyclic homology is implicitly formulated in \cite{nest1995} to compute the algebraic index. There is a closely related construction \cite{CATTANEO20111966,Moshayedi:2019rnu} using 2d Poisson sigma model, where the Gauss-Manin connection is used to obtain algebraic index on Poisson manifold. Our localized effective theory could be applied to many other topological models. For example, it would be interesting to combine techniques in \cite{CATTANEO20111966,Moshayedi:2019rnu} and our exact semi-classical method to construct topological B-models coupled with gravity, which are closely related \cite{Alexandrov_1997} to Poisson sigma models.

\noindent{\bf Acknowledgements.}
We would like to thank Vladimir Baranovsky, Owen Gwilliam, Weiqiang He, Qin Li, Yifan Li,  Ryszard Nest, Xiang Tang, Mich\`{e}le Vergne, Brian Williams, Edward Witten, Jie Zhou for helpful discussions. We are specially grateful to Mich\`{e}le Vergne for providing  numerous valuable suggestions on the earlier version of this manuscript. S.L. is partially supported by grant 11801300 of National Natural Science Foundation of China  and grant Z180003 of Beijing Municipal Natural Science Foundation.  Part of this work was done in Fall 2019 while S.L. was visiting Institute for Advanced Study at Princeton and Z.G. was visiting Center of Mathematical Sciences and Applications at Harvard. We thank for their hospitality and provision of excellent working enviroment.

\noindent \textbf{Conventions}. In this paper, we will work with various versions of cyclic tensors. We set up our notations here, which will be used throughout this paper.

Let $A$ be a $\mathbb{C}$-algebra with unit, not necessarily commutative. Let $\overline{A}:=A/\mathbb{C}\cdot 1$. Let
$$
C_{-p}(A):=A\otimes \overline{A}^{\otimes {p}}
$$
be the cyclic $p$-chains. The Hochschild differential
$$
b:C_{-p}(A)\rightarrow C_{-p+1}(A), p\geq 1
$$
and Connes operator
$$
B:C_{-p}(A)\rightarrow C_{-p-1}(A)
$$
are defined by
\begin{align*}
b(a_0\otimes\cdot\cdot\cdot\otimes a_p)&=(-1)^pa_pa_0\otimes\cdot\cdot\cdot\otimes a_{p-1}+a_0a_1\otimes\cdot\cdot\cdot\otimes a_{p}\\
&+\sum_{i=1}^{p-1}(-1)^ia_0\otimes\cdot\cdot\cdot \otimes a_i a_{i+1}\otimes\cdot\cdot\cdot \otimes a_p.
\end{align*}

\begin{align*}
B(a_0\otimes\cdot\cdot\cdot\otimes a_p)&=1\otimes a_0\otimes\cdot\cdot\cdot \otimes \cdot\cdot\cdot \otimes a_{p}+\sum_{i=1}^{p}(-1)^{pi}1\otimes a_i\otimes\cdot\cdot\cdot \otimes a_p\otimes a_0\otimes\cdot\cdot\cdot \otimes a_{i-1}.
\end{align*}
\iffalse
We will be working with cohomological degree convention and write
$$
CC_{-\bullet}(A):=C_{\bullet}(A)
$$
such that cyclic $p$-chains lies in degree $-p$. As operations on $CC_{-\bullet}(A)$, the Hochschild differential $b$ has degree $1$ and the Connes operator $B$ has degree $-1$.
\fi

The periodic cyclic complex is defined by
$$
CC^{per}_{-\bullet}(A):=(C_{-\bullet}(A)[u,u^{-1}],b+uB)
$$
where $u$ is a formal variable of degree $2$, and will be called the cyclic parameter.

\section{Formal geometry and algebraic index}\label{section:formal}
\subsection{Harish-Chandra pair}\label{sec:HC}\label{desc}
In this section we review some basics on Gelfand-Kazhdan formal geometry, which can be viewed as a universal treatment of differential geometric manipulations in terms of symmetry. Our presentation  follows \cite{gorbounov2016chiral,gorokhovsky2017equivariant}.

For our purpose, this will reduce the problem of establishing algebraic index to universal Lie algebra cohomology computations. In our later topological quantum mechanics interpretation, the Gelfand-Kazhdan formal geometry captures precisely the gauge consistency (under target coordinate transformations) of low energy effective theory on the formal loop space.

All Lie algebras and Lie groups will be defined over $\mathbb{C}$. For $G$ a Lie group, we use $\text{Lie}(G)$ to denote its associated Lie algebra.
	
	\newcommand{\gog}{\mathfrak{g}}
	
	\begin{defn}
	A Harish-Chandra pair is a pair $(\gog,K)$ where $\gog$ is a Lie algebra and $K$ is a Lie group together with
\begin{itemize}
\item an action $\rho_K:K\longrightarrow \text{Aut}(\gog)$ of $K$ on $\gog$  and
\item an injective Lie algebra map $i:\text{Lie}(K)\longrightarrow \gog$
\end{itemize}
such that the action of $\text{Lie}(K)$ on $\gog$ induced by $\rho_K$\[
	\text{Lie}(\rho_K):\text{Lie}(K)\longrightarrow \text{Der}(\gog)
	\]
	is the adjoint action induced by the embedding $i:\text{Lie}(K)\longrightarrow\gog.$
	\end{defn}
	 We have the notion of $(\mathfrak{g},K)-$modules.
\begin{defn}
  A $(\mathfrak{g},K)-$module is a vector space $V$ together with
\begin{itemize}
\item a Lie algebra map $\rho_{\mathfrak{g}}:\mathfrak{g}\rightarrow \text{End}(V)$ and
\item a Lie group map $\rho_K:K\rightarrow \text{GL}(V)$
 \end{itemize}
  such that the composition
  $\text{Lie}(K)\xrightarrow{i}\mathfrak{g}\xrightarrow{\rho_{\mathfrak{g}}}\text{End}(V)$
  equals $\text{Lie}(\rho_K).$
\end{defn}

Notions of morphisms between Harish-Chandra pairs and Harish-Chandra modules are defined in the obvious way, forming into natural categories.
	
	\begin{defn}
	A flat $(\gog,K)$-principal bundle (or just flat $(\gog,K)$-bundle) over X is\bi
	\item a principal $K$ bundle $\pi: P\longrightarrow X$
	\item a $K$-equivariant $\gog$-valued 1-form $\gamma \in\Omega^1(P,\gog)$ on $P$
	\ei
	such that\bi
	\item for all $a\in \Lie(K)$ we have $\gamma(\xi_a)=a$ where $\xi_a$ denotes the vector field on $P$ generated by $a$
	\item $\gamma$ satisfies the Maurer-Cartan equation \[
	d\gamma+\frac{1}{2}[\gamma,\gamma]=0
	\]
	where $d$ is the de Rham differential on $P$ and the bracket is taken in $\gog$.
	\ei
	\end{defn}
\begin{rem}\label{rmk-projective}
If instead, we replace the last equation by requiring $\gamma$ to satisfy
	$$
	d\gamma+\frac{1}{2}[\gamma,\gamma]=\pi^*\omega
	$$
	where $\omega$ is a $Z(\g)$-valued 2-form on $X$, then we say it is projective flat. Here $Z(\g)$ is the center of $\g$. Let $H$ be a subgroup of the center of $K$ such that $\Lie(H)=Z(\g)\cap \Lie(K)$, then we can form a  \HC pair $(\g/Z(\g), K/H)$. $P/H$ gives rise to a principal $K/H$-bundle,  and $\gamma$ induces a flat $(\g/Z(\g), K/H)$ -principal bundle structure on $P/H$.

\end{rem}
	
Given a flat $(\mathfrak{g},K)$-bundle $P\rightarrow X$ and a $(\mathfrak{g},K)$-module $V$, let
$$
\Omega^\bullet(P,{V}):=\Omega^\*(P)\otimes V
$$
denote differential forms on $P$ valued in $V$. It carries a differential which gives a flat connection by
$$\nabla^{\gamma}:=d+\rho_{\mathfrak{g}}(\gamma):\Omega^\*(P;{V})\rightarrow \Omega^{\*+1}(P;{V}).
$$
The group $K$ acts on $\Omega^\*(P)$ and $V$, hence inducing a natural action on $\Omega^\*(P, {V})$.

\begin{defn}
We define basic forms $\Omega^\*(P; {V})_{bas}\subset \Omega^\*(P, {V})$ by
$$
\Omega^\*(P;{V})_{bas}=\fbracket{\left.  \alpha \in  \Omega^\*(P, {V})   \right | \iota_{\xi_a} \alpha=0,  \forall a\in \Lie(K) \quad  \text{and}\quad  \text{$\alpha$ is $K$-invariant}.}
$$
Here $\iota_{\xi_a}$ is the contraction with the vector field $\xi_a$ generated by $a$.
\end{defn}
 It is standard to see that
$$
\nabla^{\gamma}: \Omega^\*(P; {V})_{bas}\to \Omega^{\*+1}(P; {V})_{bas}
$$
preserves basic forms. Therefore we obtain a cochain complex $\bracket{\Omega^\bullet(P;{V})_{bas}, \nabla^{\gamma}}$.

Let
$$
{V}_P:= P\times_{K} V
$$
be the vector bundle on $X$ associated to the $K$-representation $V$. Let
$
  \Omega^\*(X;{V_P})
$
be differential forms on $X$ valued in the bundle ${V_P}$. There is a canonical identification
$$
   \Omega^\*(P; {V})_{bas}\simeq   \Omega^\*(X;{V_P}).
$$
The flat connection $\nabla^{\gamma}$ induces a flat connection on ${V_P}$, still denoted by $\nabla^{\gamma}$. Equivalently, ${V_P}$ carries a structure of $D_X$-module. Its de Rham cohomology is denoted by
$
  H^\bullet(X; V_P).
$

Next we discuss how to descent Lie algebra cohomologies to geometric objects on $X$.

\begin{defn} Let $(\g, K)$ be a \HC pair, and $V$ be a \HC module.  The $(\g, K)$ relative Lie algebra cochain complex $(C^\*_{\Lie}(\mathfrak{g},K;V),\partial_{\Lie})$ is defined by
$$
C^p_{\Lie}(\mathfrak{g},K;V)=\Hom_K(\wedge^p(\g/\Lie(K)),V).
$$
Here $\Hom_K$ means $K$-equivariant linear maps.  $\pa_{\Lie}$ is the Chevalley-Eilenberg differential if we view $C^p_{\Lie}(\mathfrak{g},K;V)$ as a subspace of the Lie algebra cochains $C^p_{\Lie}(\mathfrak{g};V)$. Explicitly, for $\alpha \in C^p_{\Lie}(\mathfrak{g},K;V)$,
\begin{align*}
(\partial_{\Lie}\alpha)(a_1\wedge\cdots\wedge a_{n+1}):=\sum_{i=1}^{n+1}(-1)^{i-1}\rho_{\g}(a_i) \alpha(a_1\wedge\cdots\hat{a_i}\cdots\wedge a_{n+1})\\
+\sum_{i<j}(-1)^{i+j}\alpha([a_i,a_j]\wedge \cdots \hat{a_i}\cdots\hat{a_j}\cdots\wedge a_{n+1}).
\end{align*}
$\pa_{\Lie}$ preserves the $(\g, K)$-cochains, and we have a subcomplex
$$
   \bracket{C^\*_{\Lie}(\mathfrak{g},K;V),\partial_{\Lie}}\into   \bracket{C^\*_{\Lie}(\mathfrak{g};V),\partial_{\Lie}}.
$$
\end{defn}

\begin{rem}
	Note that we have an analogue definition of relative Lie algebra cochain $(C^\*_{\Lie}(\mathfrak{g},\mathfrak{k};V),\partial_{\Lie})$ for a Lie algebra $\mathfrak{g}$ and a sub-algebra $\mathfrak{k}$: we simply replace $K$-equivariance in the definition above by $\mathfrak{k}$. When $K$ is a connected, we can identify $C^\*_{\Lie}(\mathfrak{g},K;V)$ with $C^\*_{\Lie}(\mathfrak{g},\Lie(K);V)$.
\end{rem}

Note that an element of $C^p_{\Lie}(\mathfrak{g},K;V)$ is the same as an element in $C^p_{\Lie}(\mathfrak{g};V)$ satisfying two conditions: 1) contraction with a vector in $\Lie(K)$ is zero; 2) $K$-equivariance. It is analogous to the definition of basic forms. This is precisely described  by the following descent construction.

\begin{defn}We define the descent map  from the $(\g,K)$ relative Lie algebra cochain complex to $V$-valued de Rham complex on $P$ by
 \begin{align*}
\desc: (C^\*_{\Lie}(\mathfrak{g},K;V),\partial_{\Lie})&\rightarrow (\Omega^\*(P;{V})_{bas},\nabla^{\gamma})\\
	\alpha &\to \alpha(\gamma,...,\gamma).
\end{align*}	
Here if view $\alpha$ as an element in $\Hom(\wedge^k \g, V)$ and multi-linearly extend it over $\Omega^\*(P)$, then $\alpha(\gamma,...,\gamma)$ produces a $V$-valued $k$-form on $P$.
\end{defn}

 It is straight-forward to check $\desc$ is compatible with the differential and its image lies in basic forms. By abuse of notation, we will also write this map as
 $$
 \desc:(C^\*_{\Lie}(\mathfrak{g},K;V),\partial_{\Lie}) \rightarrow (\Omega^\*(X;{V_P}),\nabla^{\gamma}).
 $$
 Passing to cohomology, we obtain a descent map (again denoted by $\desc$)
 $$
  \desc: H^\*_{\Lie}(\mathfrak{g},K;V) \rightarrow H^\*(X;{V_P}).
 $$

\subsection{Fedosov connection}\label{sec:Fedosov}
We review Fedosov's geometric approach to deformation quantization. We explain Fedosov's flat connection on quantized algebra as a projective flat \HC bundle. This will play a fundamental role in the gluing of topological quantum mechanical models.

\subsubsection{Weyl Algebra}\label{Weyl}

\begin{defn}
The polynomial Weyl algebra $\mathcal{W}^{pol}_{2n}$ over $k=\mathbb{C}[\hbar,\hbar^{-1}]$  is the space of polynomials $k[p^1,...,p^n,q^1,...,q^n]$ in $2n$ variables with the Moyal product associated to the bivector
$$\hat\Pi=\frac{1}{2}\sum_{i=1}^{n}(\frac{\partial}{\partial p^i}\otimes\frac{\partial}{\partial q^i}-\frac{\partial}{\partial q^i}\otimes \frac{\partial}{\partial p^i})\in \End_{k}(\mathcal{W}^{pol}_{2n}\otimes\mathcal{W}^{pol}_{2n}).$$
The Moyal product is given by the formula
$$f\star g=m(e^{\hbar\hat\Pi}(f\otimes g)),$$
where $m(f\otimes g)=fg$ is the standard commutative product on polynomials. The Weyl algebra is $\mathbb{Z}$-graded for the assignment of weights
$$\wt(q^i)=\wt(p^i)=1,\quad \wt(\hbar)=2.$$
\end{defn}
\begin{defn}
We define the completed Weyl algebras $\mathcal{W}_{2n}$ and $\mathcal{W}^+_{2n}$  by
$$
\mathcal{W}_{2n}:=\mathbb{C}[[p^i,q^i]]\LS{\hbar}, \quad \mathcal{W}^+_{2n}=\mathbb{C}[[p^i,q^i]]\PS{\hbar}.
$$
An element of $\mathcal{W}_{2n}$ is represented by a formal sum $\alpha=\sum_{k\in \Z}\alpha_k \hbar^k$ where $\alpha_k \in \mathbb{C}[[p^i,q^i]]$ and vanishes for $k \ll 0$. $\alpha$ lies in $\mathcal{W}^+_{2n}$ if $\alpha_k=0$ for all $k<0$. The Moyal product $\star$ is well-defined on the completed Weyl algebras, and $\mathcal{W}^{pol}_{2n}, \mathcal{W}^+_{2n}$ are subalgebras of $\mathcal{W}_{2n}$.  We use the same $\Z$-grading of weights on $\mathcal{W}^{pol}_{2n}$ when talking about homogenous elements.
\end{defn}

$\mathcal{W}_{2n}$ has an induced Lie algebra structure with Lie bracket defined by
$$[f,g]:=\frac{1}{\hbar}[f,g]_{\star}:=\frac{1}{\hbar}(f\star g-g\star f).$$
Let $\Sp_{2n}$ be the symplectic group of linear transformations preserving the tensor $\Pi$. It acts on Weyl algebras by inner automorphisms. The Lie algebra $\mathfrak{sp}_{2n}$ of $\Sp_{2n}$ can be identified with quadratic polynomials in $\C[p^1,...,p^n,q^1,...,q^n]$. This defines a natural sequence of embeddings
$$
\mathfrak{sp}_{2n}\into \mathcal{W}^{pol}_{2n}\into \mathcal{W}_{2n}.
$$
It acts on $\mathcal{W}_{2n}$ by the corresponding inner derivations.

%For later convenience we introduce coordinates $y^i,i=1,...,2n$ by $y^{2i-1}=p^i,y^{2i}=q^i$ and the constant symplectic form on $\mathbb{R}^{2n}$: $\hat\omega=\sum dq^i\wedge p^i=\frac{1}{2}\hat\omega_{ij}dy^i\wedge dy^j$.
\subsubsection{Fedosov's connection and deformation quantization} \label{sec: Fed-conn}
\begin{defn}
Let $(M,\omega)$ be a symplectic manifold and $F_{\Sp}(M)$ be its symplectic frame bundle. The Weyl bundles $\mathcal{W}_M^+, \mathcal{W}_M$ of $M$ are defined to be
$$
\mathcal{W}_M^+:=F_{\Sp}(M)\times_{\Sp_{2n}}\mathcal{W}^+_{2n}, \quad \mathcal{W}_M:=F_{\Sp}(M)\times_{\Sp_{2n}}\mathcal{W}_{2n}.
$$
They are both bundles of algebras, since the Moyal product is $\Sp_{2n}$-equivariant. We denote Weyl-algebra valued differential forms by
$$
\Omega^\bullet_M(\mathcal{W}^+):=\Omega^\bullet_M\otimes_{C^{\infty}(M)}\Gamma(M,\mathcal{W}_M^+)=\Gamma(M, \wedge^\bullet T_M^\vee\otimes \mc W_M^+).
$$
Similarly for $\Omega^\bullet_M(\mathcal{W})$. We can also identify $\Omega^\bullet_M(\mathcal{W})=\Omega^\bullet_M(\mathcal{W}^+)[\hbar^{-1}]$.
\end{defn}

\begin{defn}
  A \textit{symplectic connection} on $M$ is a torsion free connection $\nabla$ on the tangent bundle $T_M$ that is compatible with the symplectic form: $\nabla \omega=0$.
\end{defn}

Symplectic connections exist on any symplectic manifold and are not unique \cite{Fedosov:1996fu}. Such a connection $\nabla$ induces a connection $\nabla^{\mc W}$ on $\mathcal{W}_M$. Let
$$
R_{\nabla}=\nabla^2\in \Omega^2_M(\End(T_M))
$$
denote the curvature of $\nabla$. If we identify
$
T_M\iso F_{\Sp}(M)\times_{\Sp_{2n}}\R^{2n}
$, then
$$
R_{\nabla}\in \Omega^2_M(F_{\Sp}(M)\times_{\Sp_{2n}}\mathfrak{sp}_{2n}).
$$
Under the embedding $\mathfrak{sp}_{2n}\into  \mathcal{W}^+_{2n}$, this curvature form is mapped to an element
$$
R_\nabla^{\mc W}\in \Omega^2_M(\mathcal{W}^+).
$$
\begin{lem}The curvature of $\nabla^{\mc W}$ on the Weyl bundle $\mc W_M$ is given by
$$
(\nabla^{\mc W})^2= {1\over \hbar}[R_\nabla^{\mc W},-]_{\star}.
$$

\end{lem}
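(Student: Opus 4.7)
The plan is to work in a local trivialization of the symplectic frame bundle $F_{\Sp}(M)$ and invoke the standard curvature-representation identity for connections on associated vector bundles. I would pick an open $U\subset M$ with a local symplectic trivialization, so that under the identification $T_M\iso F_{\Sp}(M)\times_{\Sp_{2n}}\R^{2n}$ the connection $\nabla$ takes the form $d+A$ for some $A\in \Omega^1(U,\sp_{2n})$, with curvature $R_\nabla = dA + \tfrac{1}{2}[A,A]\in \Omega^2(U,\sp_{2n})$ (bracket taken in $\sp_{2n}$). The induced connection on $\mc W_M|_U$ then reads $\nabla^{\mc W} = d + \rho(A)$, where $\rho:\sp_{2n}\to \Der(\mc W_{2n})$ is obtained by differentiating the $\Sp_{2n}$-action on $\mc W_{2n}$.

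The key step is to identify $\rho$ with the inner-derivation action coming from the embedding $\sp_{2n}\hookrightarrow \mc W^+_{2n}$ as quadratic polynomials in $p^i,q^i$. For $a\in\sp_{2n}$ and any $f\in\mc W_{2n}$, I would verify by direct inspection of the Moyal bidifferential that
$$
[a,f]_\star = \hbar\{a,f\}_P,
$$
because $\hat\Pi^k(a\otimes f)$ becomes symmetric under the swap $a\leftrightarrow f$ once $k\geq 2$: the second derivatives of the quadratic $a$ are constants, so relabeling summation indices produces the same expression with $a$ and $f$ exchanged, and the antisymmetric part cancels. This has two consequences. First, $\rho(a) = \tfrac{1}{\hbar}[a,-]_\star$ as a derivation of $\mc W_{2n}$. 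Second, applied to two quadratic polynomials $a,b\in\sp_{2n}$, it gives $\tfrac{1}{\hbar}[a,b]_\star = \{a,b\}_P$, which is precisely the bracket of $\sp_{2n}$ under the identification with quadratic polynomials; in particular $\rho$ is a Lie algebra homomorphism.

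With these identifications the curvature computation is routine:
\begin{align*}
(\nabla^{\mc W})^2 = (d+\rho(A))^2 = d\rho(A) + \rho(A)\wedge \rho(A) = \rho\bigl(dA + \tfrac{1}{2}[A,A]\bigr) = \rho(R_\nabla) = \tfrac{1}{\hbar}[R_\nabla^{\mc W},-]_\star,
\end{align*}
where $R_\nabla^{\mc W}\in \Omega^2(U,\mc W^+)$ is the image of $R_\nabla$ under $\sp_{2n}\hookrightarrow \mc W^+_{2n}$. Both sides of the claimed identity are globally defined operator-valued $2$-forms on $M$, so agreement in any local trivialization yields the global statement.

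The only genuinely nontrivial ingredient is the truncation $[a,f]_\star = \hbar\{a,f\}_P$ for $a$ quadratic; everything else is the standard curvature-representation formula for an associated bundle, combined with the fact that $\mc W_M$ is associated to $F_{\Sp}(M)$ through the $\Sp_{2n}$-action by algebra automorphisms.
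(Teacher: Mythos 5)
The paper states this lemma without proof, so there is nothing to compare against; your argument is correct and is the standard one (local trivialization, the curvature--representation identity $(d+\rho(A))^2=\rho(dA+\tfrac12[A,A])$, and the identification of the derived $\sp_{2n}$-action with the inner derivation $\tfrac1\hbar[a,-]_\star$). One small clarification on the key truncation $[a,f]_\star=\hbar\{a,f\}_P$: in the commutator all even powers of $\hat\Pi$ cancel for \emph{any} $a,f$ by the antisymmetry $m(\hat\Pi^k(g\otimes f))=(-1)^k m(\hat\Pi^k(f\otimes g))$, while the odd powers $k\geq 3$ vanish because they apply at least three derivatives to the quadratic $a$; phrasing it this way separates the two mechanisms more cleanly than the "symmetry for $k\geq 2$" formulation you give, but your conclusion is the same and the proof stands.
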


Given a symplectic connection $\nabla$ and any sequences $\{\omega_k\}_{k\geq 1}$ of closed 2-forms on $M$, Fedosov\cite{fedosov1994} proved that there exists a unique element $\gamma\in \Omega^1_M(\mathcal{W})$ with certain prescribed gauge fixing condition such that the following equation holds
  $$\nabla^{\mc W}\gamma+\frac{1}{2\hbar}[\gamma,\gamma]_{\star}+R_{\nabla}^{\mc W}=\omega_{\hbar}.$$
 Here $\omega_{\hbar}=-\omega+\sum_{k\geq 1}\hbar^k\omega_k$ is the characteristic class of the deformation quantization (the original proof in \cite{fedosov1994} is for $\omega_{\hbar}=-\omega$ but the same method works in this generality). Define the modified connection on $\mc W_M$ by $D=\nabla^{\mc W}+\frac{1}{\hbar}[\gamma,-]_{\star}$. Then
$$
D^2={1\over \hbar}[\omega_{\hbar},-]_\star=0
$$
since $\omega_{\hbar}$ is an central element in $\Omega_M^\bullet(\mc W)$. $D$ is Fedosov's flat connection on the Weyl bundle. Moreover, if we consider flat sections
$$
\mathcal{W}^+_D(M):=\fbracket{s\in \Gamma(M, \mc W^+_M)| D s=0},
$$
then $\mathcal{W}^+_D(M)$ is a subalgebra of $\Gamma(M, \mc W^+_M)$ which defines a deformation quantization on $(M,\omega)$ (under a natural isomorphism $\mathcal{W}^+_D(M)\iso C^\infty(M)\PS{\hbar}$ via the symbol map \cite{fedosov1994}, which means the map by setting $p^i=q^i=0$ in the fiber $\mathcal{W}^+_{2n}$).

Fedosov connection can be generalized to the bundle case \cite{fedosov-bundle}. Let $E$ be a vector bundle equipped with a connection $\nabla^E$. We consider the following bundle of algebras
$$
  \mc W^+_M \otimes \End(E)
$$
which has an induced connection from $\nabla^{\mc W}$ and $\nabla^E$. Then similarly we can find
$$
\gamma_E\in \Omega^1_M(  \mc W^+_M \otimes \End(E))
$$
which satisfies Fedosov's equation:
\begin{equation}\label{FEDOSOV}
   \nabla^{\mc W}\gamma_E+\frac{1}{2\hbar}[\gamma_E,\gamma_E]_{\star}+R_{\nabla}^{\mc W}=\omega_{\hbar}. \tag{F}
\end{equation}
It follows that the following modified connection on $\mc W^+_M \otimes \End(E)$ is flat
$$
  \nabla^{\mc W}\otimes 1+ 1\otimes \nabla^E+{1\over \hbar}[\gamma_E,-]_\star.
$$
The flat sections give a quantum deformation of the algebra $\Gamma(M, \End(E))$.

\subsection{Gelfand-Fuks map}\label{sec: GF}
\subsubsection{Fedosov connection and Gelfand-Fuks map}

Set $\mathfrak{g}=\mathcal{W}^+_{2n}$, regarded as a Lie algebra with bracket $[-,-]=\frac{1}{\hbar}[-,-]_{\star}$, and we have $\mathfrak{sp}_{2n}\subset \mathfrak{g}$. Then $(\mathfrak{g},\text{Sp}_{2n})$ forms a \HC pair.

%This choice of $K=\text{Sp}_{2n}$ as opposed to the more natural choice $\hat{K}$ of symplectic diffeomorphisms of a n-disk is justified by the argument at the end of section 2.1: the Chevalley-Eilenberg complex for $(\mathfrak{g},K)$ computes cohomology in the category of $(\mathfrak{g},K)-\text{Mod}\cong(\mathfrak{g},\hat{K})-\text{Mod}$, while the relative Chevalley-Eilenberg complex for $(\mathfrak{g},\hat{K})$ computes cohomology in some other category.

The symplectic connection $\nabla$ can be equivalently described by a connection 1-form
$$
A\in \Omega^1(F_{\Sp}(M), \sp_{2n})
$$
on the principal $\Sp_{2n}$-bundle $F_{\Sp}(M)$. We view $A$ as an element
$$
A \in \Omega^1(F_{\Sp}(M),\g)
$$
under the natural embedding $\sp_{2n}\into\mathcal{W}^+_{2n}$.  The correction term $\gamma$ in Fedosov's connection can be viewed as a $\Sp_{2n}$-invariant element of
$$
\gamma \in \Omega^1(F_{\Sp}(M), \mathcal{W}^+_{2n}).
$$

\begin{prop}\label{prop-flat} $A+\gamma$ defines a projective-flat $(\g, \Sp_{2n})$-principal bundle structure on $F_{Sp}(M)$.
\end{prop}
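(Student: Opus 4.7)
The plan is to check directly the three defining conditions for a projective-flat $(\g,\Sp_{2n})$-bundle structure on $F_{\Sp}(M)$ with connection 1-form $A+\gamma \in \Omega^{1}(F_{\Sp}(M),\g)$: (i) $\Sp_{2n}$-equivariance; (ii) $(A+\gamma)(\xi_{a})=a$ for every $a\in \sp_{2n}=\Lie(\Sp_{2n})$; and (iii) the projective Maurer--Cartan equation $d(A+\gamma)+\tfrac{1}{2}[A+\gamma,A+\gamma]=\pi^{*}\omega$ for a $Z(\g)$-valued 2-form $\omega$ on $M$.

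Conditions (i) and (ii) come for free. The connection 1-form $A$ is $\Sp_{2n}$-equivariant with $A(\xi_{a})=a$ for $a\in \sp_{2n}$, while $\gamma$, coming from a basic form on $M$, is $\Sp_{2n}$-invariant and horizontal, so $\gamma(\xi_{a})=0$; these facts pass through the Lie algebra embedding $\sp_{2n}\into \g$.

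The main content is condition (iii). I would first expand and regroup the left hand side as
\begin{align*}
d(A+\gamma)+\tfrac{1}{2}[A+\gamma,A+\gamma]_{\g}
=\bigl(dA+\tfrac{1}{2}[A,A]_{\g}\bigr)+\bigl(d\gamma+[A,\gamma]_{\g}\bigr)+\tfrac{1}{2}[\gamma,\gamma]_{\g}.
\end{align*}
Because $\sp_{2n}\into \g$ is a Lie algebra homomorphism, the first parenthesized term is exactly the curvature of $\nabla$ viewed in the Weyl bundle, i.e.\ $R_{\nabla}^{\mc W}$ pulled back to $F_{\Sp}(M)$. Because $\g$ acts on $\mc W^{+}_{2n}$ by inner derivations through $a\cdot v=[a,v]_{\g}=\tfrac{1}{\hbar}[a,v]_{\star}$, pulling back the covariant derivative $\nabla^{\mc W}$ from $M$ to $F_{\Sp}(M)$ matches the second parenthesized term: $\nabla^{\mc W}\gamma \leftrightarrow d\gamma+[A,\gamma]_{\g}$. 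Finally, $[\gamma,\gamma]_{\g}=\tfrac{1}{\hbar}[\gamma,\gamma]_{\star}$ by definition of the $\g$-bracket. Substituting Fedosov's equation \eqref{FEDOSOV} (for the trivial bundle case) then produces $\pi^{*}\omega_{\hbar}$.

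To finish one checks that $\omega_{\hbar}$ is $Z(\g)$-valued. Direct inspection shows that an element of $\g=\mc W^{+}_{2n}$ which $\star$-commutes with every $p^{i}$ and $q^{i}$ must be a scalar, so $Z(\g)=\C\PS{\hbar}$; the characteristic form $\omega_{\hbar}=-\omega+\sum_{k\geq 1}\hbar^{k}\omega_{k}$ is scalar-valued and therefore $Z(\g)$-valued. The only delicate point in the argument is the bookkeeping between the three relevant brackets---the $\sp_{2n}$-bracket, the $\g$-bracket, and $\tfrac{1}{\hbar}[-,-]_{\star}$---and the pullback identification of $\mc W^{+}$-valued forms on $M$ with basic $\g$-valued forms on $F_{\Sp}(M)$; once these conventions are aligned, projective flatness is a direct restatement of Fedosov's equation.
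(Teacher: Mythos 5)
Your proposal is correct and follows the same route as the paper: the paper's proof simply asserts that Fedosov's equation is equivalent to the projective Maurer--Cartan equation $d(A+\gamma)+\tfrac{1}{2}[A+\gamma,A+\gamma]=\omega_\hbar$, and your term-by-term expansion (curvature of $A$ giving $R_\nabla^{\mc W}$, the cross term giving $\nabla^{\mc W}\gamma$, and $\tfrac{1}{2}[\gamma,\gamma]_\g=\tfrac{1}{2\hbar}[\gamma,\gamma]_\star$) together with the equivariance/horizontality checks is exactly the verification of that equivalence. Nothing is missing.
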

\begin{proof}Fedosov's equation is equivalent to
$$
  d(A+\gamma)+{1\over 2}[A+\gamma, A+\gamma]=\omega_\hbar.
$$
This says that $A+\gamma$ is a projective flat connection.
\end{proof}

Now we apply the descent construction in Section \ref{sec:HC} to the pair $(\mathfrak{g}',K):=(\mathfrak{g}/Z(\mathfrak{g}),\Sp_{2n})$, where $Z(\mathfrak{g})$ is the center of $\mathfrak{g}$. In our case $\mathfrak{g}=\mathcal{W}^+_{2n}$, $Z(\g)=\C\PS{\hbar}$ and $Z(\g)\cap \sp_{2n}=0$.

There is a natural isomorphism
$$
C^\bullet_{\Lie}(\mathfrak{g}', \Sp_{2n};\mathbb{C}\LS{\hbar})\iso C^\bullet_{\Lie}(\mathfrak{g},\sp_{2n}\oplus Z(\mathfrak{g});\mathbb{C}\LS{\hbar}).
$$

We get a map of cochain complexes  via descent construction

\iffalse
trivial module $\C\LS{\hbar}$ to obtain a map of cochain complexes (we are actually applying previous construction to $(\mathfrak{g}/Z(\mathfrak{g}),\text{sp}_{2n})$) but for convenience we write it in the following way).
\fi

\begin{align*}
\desc:(C^\*_{\Lie}(\mathfrak{g}, \sp_{2n}\oplus \,Z(\mathfrak{g});\mathbb{C}\LS{\hbar}),\partial_{\Lie})&\rightarrow(\Omega^\*_M\LS{\hbar},d)\\
\alpha\quad \quad &\mapsto \alpha(\gamma, \dots, \gamma).
\end{align*}
Note we do not insert $A$ since it lies in $\sp_{2n}$. This is the Gelfand-Fuks map discussed in \cite{feigin2005hochschild,Pflaum2007An,gorokhovsky2017equivariant,feigin1989riemann}.
\subsubsection{Twist by vector bundle}

The Gelfand-Fuks map can be extended to the bundle case as well. Let $E$ be a vector bundle equipped with a connection $\nabla^E$. Let
$$
\gamma_E\in \Omega^1_M(  \mc W^+_M \otimes \End(E))
$$
such that   $\nabla^{\mc W}\otimes 1+ 1\otimes \nabla^E+{1\over \hbar}[\gamma_E,-]_\star$ defines a flat Fedosov connection on the bundle of algebras $
  \mc W^+_M \otimes \End(E)
$. Let $Fr(E)$ be the frame bundle of $E$, which is a principal $\GL_r$-bundle on $X$ and
$$
 E\iso Fr(E)\times_{\GL_r} \C^r.
$$
We can view $\nabla^E$ as coming from a connection on the principal $\GL_r$-bundle $Fr(E)$. Then the symplectic connection together with $\nabla^E$ gives a connection 1-form
$$
A\in \Omega^1(F_{\Sp}(M)\times_M Fr(E), \sp_{2n}\oplus \gl_r)
$$
on the principal $\Sp_{2n}\times \GL_r$-bundle $F_{\Sp}(M)\times_M Fr(E)$.

We consider the following \HC pair $(\g,K)$. Let $\gl_r(\mathcal{W}_{2n})$ be the Lie algebra of $\mathcal{W}_{2n}$-valued $r\times r$ matrices with bracket $[x,y]:=\frac{1}{\hbar}(x\star y-y\star x)$. Here $\star$ is the matrix multiplication using the Moyal product. Define $\g$ to be the Lie subalgebra spanned by
$$
\mathfrak{g}:=\mathcal{W}^+_{2n}\cdot\text{Id}+\hbar\gl_r(\mathcal{W}^+_{2n}).
$$
$\g$ has a Lie subalgebra $\sp_{2n}\oplus \hbar \gl_r$, which is isomorphic to the direct sum $\sp_{2n}\oplus \gl_r$ under the usual bracket. This gives an embedding $\sp_{2n}\oplus \gl_r\into \g$ that allows us to view
$$
A\in \Omega^1(F_{\Sp}(M)\times_M Fr(E), \g).
$$
Again, $\gamma_E$ can be viewed as a $\Sp_{2n}\times \GL_r$-invariant element
$$
\gamma_E\in \Omega^1(F_{\Sp}(M)\times_M Fr(E), \g).
$$

The analogue of Proposition \ref{prop-flat} is the following
\begin{prop}$A+\gamma_E$ defines a projective-flat $(\g, \Sp_{2n}\times \GL_r)$-bundle structure on $F_{Sp}(M)\times_{M} Fr(E)$.

\end{prop}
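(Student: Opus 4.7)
The plan is to verify the three defining conditions for a projective-flat $(\g,K)$-bundle, with $K=\Sp_{2n}\times\GL_r$, on $P=F_{\Sp}(M)\times_M Fr(E)$ with connection form $A+\gamma_E$: (i) the projective Maurer--Cartan equation $d(A+\gamma_E)+\frac{1}{2}[A+\gamma_E,A+\gamma_E]=\pi^*\omega_\hbar$; (ii) the fundamental vector field axiom $(A+\gamma_E)(\xi_a)=a$ for $a\in\Lie(K)$; and (iii) $K$-equivariance. The strategy is to reduce (i) to Fedosov's equation (F), exactly mirroring the proof of Proposition \ref{prop-flat}, and to obtain (ii)-(iii) formally from the construction.

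The first key step is to check the commutation of the two factors of $K$ after embedding into $\g$. The image of $\sp_{2n}$ sits in the scalar part $\mathcal W^+_{2n}\cdot\Id$ (as quadratic polynomials), while the image of $\gl_r$ sits in $\hbar\gl_r(\mathcal W^+_{2n})$ as $\hbar\cdot(\text{constant matrix})$. Under the bracket $[-,-]=\frac{1}{\hbar}[-,-]_\star$ on $\g$, the factor of $\hbar$ cancels and we are left with the commutator between a scalar matrix with polynomial entries and a constant matrix with scalar entries, which vanishes. Hence the images of $\sp_{2n}$ and $\gl_r$ commute in $\g$, and decomposing $A=A_{\sp}+A_{\gl}$ gives
\[
dA+\tfrac{1}{2}[A,A]=\bigl(dA_{\sp}+\tfrac{1}{2}[A_{\sp},A_{\sp}]_{\sp_{2n}}\bigr)+\hbar\bigl(dA_{\gl}+\tfrac{1}{2}[A_{\gl},A_{\gl}]_{\gl_r}\bigr)
\]
in $\Omega^2(P,\g)$. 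By the Cartan structure equations on the principal bundle, this identifies $dA+\frac{1}{2}[A,A]$ with the total curvature of the combined connection $\nabla^{\mc W}\otimes 1+1\otimes\nabla^E$ on $\mc W^+_M\otimes\End(E)$, pulled back to $P$ and viewed in $\g$.

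The second step is to translate Fedosov's equation (F) into the Maurer--Cartan equation. Since $\gamma_E$ is the pullback to $P$ of a $\Sp_{2n}\times\GL_r$-equivariant section over $M$, it is a horizontal form on $P$, so the exterior derivative on $P$ of $\gamma_E$ combines with the adjoint action by $A$ to produce the covariant derivative: $d\gamma_E+[A,\gamma_E]=(\nabla^{\mc W}\otimes 1+1\otimes\nabla^E)\gamma_E$. Combining this with Step 1 and using that $[A,\gamma_E]$ and $\frac{1}{2}[\gamma_E,\gamma_E]$ are both computed by $\frac{1}{\hbar}[-,-]_\star$ since the embeddings of $\sp_{2n}\oplus\gl_r$ into $\g$ respect the Lie brackets, Fedosov's equation (F) on $M$ is precisely the projective Maurer--Cartan condition (i) on $P$ with right-hand side $\pi^*\omega_\hbar$, exactly as in Proposition \ref{prop-flat}.

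For (ii) and (iii): since $A$ is a genuine principal connection on the product bundle, $A(\xi_a)=a\in\sp_{2n}\oplus\gl_r$, which is mapped to $a\in\g$ by the embedding; horizontality of $\gamma_E$ gives $\iota_{\xi_a}\gamma_E=0$, so $(A+\gamma_E)(\xi_a)=a$. $K$-equivariance of $A$ is standard, and that of $\gamma_E$ follows from the $\Sp_{2n}\times\GL_r$-equivariance of the construction of $\gamma_E$. The main obstacle is the careful bookkeeping of the $\hbar$-factors appearing in the embedding $\gl_r\hookrightarrow\hbar\gl_r(\mathcal W^+_{2n})$ and in the bracket $\frac{1}{\hbar}[-,-]_\star$, in particular verifying that the composite recovers the original Lie brackets on $\sp_{2n}$ and $\gl_r$ so that the curvature term appearing in Fedosov's equation (F) on the bundle $\mc W^+_M\otimes\End(E)$ matches $dA+\frac{1}{2}[A,A]$ viewed in $\g$.
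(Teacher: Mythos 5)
Your proposal is correct and follows essentially the same route as the paper: the paper proves the untwisted case (Proposition \ref{prop-flat}) in one line by observing that Fedosov's equation is the projective Maurer--Cartan equation, and leaves the bundle version as "the analogue" with no written proof, which is exactly the reduction you carry out. Your additional checks --- that the embedded copies of $\sp_{2n}$ and $\gl_r$ commute in $\g$ and that the $\hbar$-factors in the embedding $\gl_r\into\hbar\gl_r(\mathcal W^+_{2n})$ cancel against the bracket $\frac{1}{\hbar}[-,-]_\star$ so that $dA+\frac12[A,A]$ reproduces the total curvature of the combined connection --- are precisely the bookkeeping the paper leaves implicit, and they are done correctly.
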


In our case, the center $Z(\mathfrak{g})=\C\PS{\hbar}\Id\iso \C\PS{\hbar}$. By Remark \ref{rmk-projective}, we can form the \HC pair $(\g/Z(\g), \PGL_r)$ and $A+\gamma_E$ induces a flat $(\g/Z(\g), \PGL_r)$-principal bundle structure on $F_{\Sp}(M)\times_M PFr(E)$. Here $PFr(E)=Fr(E)/\C^*$ is the projective frame bundle.

Note that there is a natural isomorphism
$$
C^\bullet_{Lie}(\mathfrak{g}, \sp_{2n}+ \hbar\gl_r+Z(\mathfrak{g});\mathbb{C}\LS{\hbar})\iso C^\bullet_{Lie}(\mathfrak{g}/Z(\g),\Sp_{2n}\times \text{PGL}_r ;\mathbb{C}\LS{\hbar}).
$$

As a corollary, we obtain the Gelfand-Fuks map of cochain complexes by descent
\begin{align*}
\desc:
\bracket{C^\bullet_{Lie}(\mathfrak{g}, \sp_{2n}+ \hbar\gl_r+Z(\mathfrak{g});\mathbb{C}\LS{\hbar}),\partial_{Lie}}&\rightarrow \bracket{\Omega_M^\bullet\LS{\hbar},d}\\
\alpha\quad \quad &\to \alpha(\gamma, \dots, \gamma).
\end{align*}

\subsubsection{Characteristic classes}\label{sec:Char}
We review the Chern-Weil construction of characteristic classes in Lie algebra cohomology.  They will descent to the
usual characteristic forms via the Gelfand-Fuks map.

Let us first recall the construction of curvature form in Lie algebra cohomology. Let $\g$ be a Lie algebra and $\mathfrak{h}$ be a Lie subalgebra. Suppose there exists an $\mathfrak{h}$-equivariant splitting
$$
\pr: \g\to \mathfrak{h}
$$
of the embedding $\mathfrak{h}\into \g$. In general $\pr$ is not a Lie algebra homomorphism from $\g$ to $\mathfrak{h}$. The failure defines a linear map
$
R\in \Hom(\wedge^2\g,\mathfrak{h})
$
by
$$
R(\alpha,\beta):= [\pr(\alpha),\pr(\beta)]_{\mathfrak{h}}-\pr[\alpha,\beta]_{\g}, \quad \alpha, \beta\in \g.
$$
$\mathfrak{h}$-equivariance implies that $R(\alpha,-)=0$ if $\alpha\in \mathfrak{h}$ and $R$ is $\mathfrak{h}$ equivariant. $R$ is called the curvature form. Let $\Sym^m(\mathfrak{h}^\vee)^{\mathfrak{h}}$ be invariant polynomials on $\mathfrak{h}$ of degree $m$. Given $P\in \Sym^m(\mathfrak{h}^\vee)^{\mathfrak{h}}$, we can associate a cochain $P(R)$ in $C^{2m}(\g, \mathfrak{h};\C)$ given by the composition
$$
 P(R):  \wedge^{2m}\g\stackrel{\wedge^m R}{\to} \Sym^{m}(\mathfrak{h})\stackrel{P}{\to} \C.
$$
It can be checked that $\pa_{\Lie}P(R)=0$, defining a cohomology class $[P(R)]$ in $H^{2m}(\g,\mathfrak{h};\C)$ which does not depend on the choice of $\pr$. Therefore we have the analogue of Chern-Weil characteristic map
\begin{align*}
\chi: \Sym^\bullet(\mathfrak{h}^\vee)^{\mathfrak{h}}&\to H^{\*}(\g,\mathfrak{h};\C)\\
    P &\to \chi(P):=[P(R)].
\end{align*}

Now we apply the above construction to our situation where
$$
\mathfrak{g}=\mathcal{W}_{2n}^+\cdot \Id+\hbar(\gl_r(\mathcal{W}^+_{2n})), \quad \mathfrak{h}=\sp_{2n}+ \hbar\gl_r+Z(\mathfrak{g}).
$$
Any element in $\mathfrak{g}=\mathcal{W}_{2n}^+\cdot \Id+\hbar(\gl_r(\mathcal{W}^+_{2n}))$ can be uniquely written as
$$
f\cdot \text{Id}+\hbar A, \quad f\in \mathbb{C}[[y^1,...,y^{2n}]], A\in \gl_r(\mathcal{W}^+_{2n}).
$$
Here we write uniformly $y^1=p^1,\dots, y^n=p^n, y^{n+1}=q^1,\dots, y^{2n}=q^n$. We also identify
$$
\mathfrak{h}=\mathfrak{sp}_{2n}\oplus \hbar\gl_r\oplus \mathbb{C}\oplus_{i>1} \hbar^i\mathbb{C}.
$$

 We define the $\mathfrak{h}-$equivariant projection: $\pr:\mathfrak{g}\rightarrow \mathfrak{h}$ as follows
 $$\pr(f\cdot\text{Id}+\hbar A):=\bracket{\frac{1}{2}\partial_i\partial_jf(0)y^i y^j,\hbar A_1(0),f(0),\bigoplus\limits_{i>1}\frac{1}{r}\trace(\hbar^i A_i(0))}.
 $$
 Here we write
  $\hbar A=\hbar A_1+\hbar^2 A_2+...$ . We can write $\pr=\pr_1+\pr_2+\pr_3$ where
  \begin{align*}
  \pr_1(f\cdot\text{Id}+\hbar A)&=\frac{1}{2}\partial_i\partial_jf(0)y^i y^j\in \mathfrak{sp}_{2n}\\\pr_2(f\cdot\text{Id}+\hbar A)&=\hbar A_1(0)\in\hbar\gl_r\\
  \pr_3(f\cdot\text{Id}+\hbar A)&=f(0)+\sum_{i>1}\frac{1}{r}\trace(\hbar^iA_i(0))\in \mathbb{C}\oplus_{i>1} \hbar^i\mathbb{C}
  \end{align*}

  The corresponding curvature is given by
 $$R:=[\pr(-),\pr(-)]-\pr([(-),(-)])\in \Hom(\wedge^2 \g,\mathfrak{h}).
 $$
It can be decomposed into three terms $R=R_1+R_2+R_3,$ where
\begin{align*}
R_1&:=[\pr_1(-),\pr_1(-)]-\pr_1[-,-]&&\in \Hom(\wedge^2 \g, \sp_{2n})\\
R_2&:=[\pr_2(-),\pr_2(-)]-\pr_2[-,-]&&\in \Hom(\wedge^2 \g, \gl_r)\\
R_3&:=-\pr_3[-,-]&&\in \Hom(\wedge^2 \g, \C\oplus \oplus_{i>1}\hbar^i \C).
\end{align*}

\begin{rem} It is worthwhile to point out that all the $\Hom$'s here are only $\C$-linear map, but not $\C\PS{\hbar}$-linear, although $\g$ is a $\C\PS{\hbar}$-module.

\end{rem}

We now define the $\hat A$ genus
$$\hat{A}(\mathfrak{sp}_{2n}):=\bbracket{\det\bracket{\frac{R_1/2}{\sinh(R_1/2)}}^{1/2}}\in H^\bullet(\mathfrak{g},\mathfrak{h};\mathbb{C}) ,$$
and the Chern Character:
$$\Ch(\gl_r):=[\Tr(e^{R_2})]\in H^\bullet(\mathfrak{g},\mathfrak{h};\mathbb{C}).$$
%The above cohomology classes does not depend on the choice of projection $\pr$.

\begin{prop}\label{prop-char-desc}
Under the descent map $\desc: H^\*(\g,\mathfrak{h};\C)\to H^\*(M)\LS{\hbar}$ via the Fedosov connection,
  \begin{align*}
  &\desc(\hat{A}(\mathfrak{sp}_{2n}))=\hat{A}(M)\in H^\bullet(M,\mathbb{C})\\
&  \desc(\Ch(\gl_r))=e^{\omega_1}\Ch(E)\in H^\bullet(M,\mathbb{C})\\
&  \desc(R_3)=\omega_{\hbar}-\hbar \omega_1\in H^\*(M,\mathbb{C})\PS{\hbar}.
  \end{align*}
  Here $\omega_1$ is the $\hbar^1$ term of $\omega_{\hbar}.$ $\hat A(M)$ is the $\hat A$-genus of $M$, and $\Ch(E)$ is the Chern character of the bundle $E$.

\end{prop}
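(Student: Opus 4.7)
All three identities follow by substituting the flat connection 1-form $\alpha := A + \gamma_E$ into the 2-cochain $R = R_1+R_2+R_3$ and simplifying via Fedosov's equation (the bundle analogue of Proposition~\ref{prop-flat}): $d\alpha + \tfrac{1}{2}[\alpha,\alpha] = \omega_\hbar \cdot \Id$. Two structural facts make the computation clean. First, the three subalgebras $\sp_{2n}$, $\hbar\gl_r$, $Z(\g)$ pairwise commute inside $\g$ (the center is central, and scalar matrices commute with quadratic $y$-polynomials under the Moyal bracket), so $\desc(R)$ decouples as $\sum_i \desc(R_i)$ with each $\desc(R_i)$ depending only on $\pr_i(\alpha)$. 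Second, applying $\pr_i$ to Fedosov's equation lets me substitute $\pr_i([\alpha,\alpha]) = -2\,d\,\pr_i(\alpha) + 2\,\pr_i(\omega_\hbar\cdot\Id)$, reducing $\desc(R_i)$ to the ordinary Chern-Weil curvature of $\pr_i(\alpha)$ shifted by the central term $-2\,\pr_i(\omega_\hbar\cdot\Id)$.

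The central shifts are immediate from the decomposition $\omega_\hbar = -\omega + \sum_{k\geq 1}\hbar^k\omega_k$ and the definition of $\pr_i$: one finds $\pr_1(\omega_\hbar\cdot\Id)=0$, $\pr_2(\omega_\hbar\cdot\Id)=\hbar\omega_1\cdot\Id$, and $\pr_3(\omega_\hbar\cdot\Id)=\omega_\hbar-\hbar\omega_1$, already distributing $\omega_\hbar$ across the three components in the form required by the proposition. The Fedosov recursion then determines $\pr_i(\alpha)$: $\pr_1(\alpha)$ reassembles the symplectic connection $A$ with the quadratic-in-$y$ $\Id$-component of $\gamma_E$ into a 1-form with curvature the Riemann tensor $R_\nabla$ of the symplectic connection on $M$; $\pr_2(\alpha)$ reproduces the bundle connection $\nabla^E$ with curvature $R_{\nabla^E}$; and $\pr_3(\alpha)$ has its exact contribution $d\pr_3(\alpha)$ vanishing by the Fedosov gauge condition, so only the central shift survives.

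Assembling the pieces: for $R_1$, the invariant polynomial $[\det(R_1/2/\sinh(R_1/2))^{1/2}]$ applied to $R_\nabla$ gives the usual $\hat A$ Chern-Weil representative on $M$, so $\desc(\hat A(\sp_{2n})) = \hat A(M)$. For $R_2$, $\desc(R_2) = R_{\nabla^E} + \omega_1\cdot\Id$; since $\omega_1\cdot\Id$ commutes with everything, $\Tr(e^{R_2}) = e^{\omega_1}\Tr(e^{R_{\nabla^E}}) = e^{\omega_1}\Ch(E)$. For $R_3$, the self-bracket of the center vanishes and the central shift alone contributes, yielding $\omega_\hbar-\hbar\omega_1$.

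The main technical obstacle is the careful bookkeeping of $\hbar$-grading and traces --- verifying precisely that the $\hbar\omega_1\cdot\Id$ contribution from $\pr_2(\omega_\hbar\cdot\Id)$ combines with the Chern-Weil curvature of $\pr_2(\alpha)$ to produce $R_{\nabla^E}+\omega_1\cdot\Id$ (rather than being absorbed into $R_{\nabla^E}$), and that the factors of $2$, $\hbar$, and signs implicit in the Chern-Weil conventions reconcile consistently across the three cases. Once these normalizations are pinned down, each identification is standard Chern-Weil theory applied to the symplectic connection on $M$ or the bundle connection on $E$.
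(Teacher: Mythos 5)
Your proposal is correct in substance but takes a genuinely different route from the paper for the first two identities. The paper simply writes out the low-weight terms of the Fedosov connection, $\gamma=\omega_{ij}y^idx^j+\tfrac{1}{8}R_{ijkl}y^iy^jy^kdx^l+\hbar((R_E)_{ij}y^idx^j+(\omega_1)_{ij}y^idx^j)+\cdots$, and evaluates $R_1(\gamma,\gamma)$ and $R_2(\gamma,\gamma)$ directly against the explicit formulas for $R_1,R_2$; only for $R_3$ does it project the Fedosov equation, exactly as you do. Your approach instead runs the $\pr_i$-projection of the Maurer--Cartan/Fedosov equation uniformly through all three components, replacing $\pr_i([\alpha,\alpha])$ by $-2d\,\pr_i(\alpha)+2\pr_i(\omega_\hbar)$ and identifying $[\pr_i\alpha,\pr_i\alpha]+2d\,\pr_i\alpha$ with the ordinary Chern--Weil curvature of $\pr_1(\alpha)=A_{\sp}$ (resp.\ $\pr_2(\alpha)=$ the $\gl_r$-connection form). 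This buys you independence from the explicit weight-3 term of $\gamma$ (you only need the gauge condition that $\gamma$ has no quadratic $\Id$-component and vanishing $\pr_2,\pr_3$), at the cost of having to track the normalization of $\alpha(\gamma,\gamma)$ and the resulting factors of $2$ and signs against the paper's conventions --- which, as you note, is the genuine bookkeeping burden; the paper's direct expansion sidesteps this by producing the curvature $2$-form on the nose. Your computation of the central shifts $\pr_1(\omega_\hbar)=0$, $\pr_2(\omega_\hbar)=\hbar\omega_1\cdot\Id$, $\pr_3(\omega_\hbar)=\omega_\hbar-\hbar\omega_1$ agrees with what the paper's definitions give.

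One justification in your plan is wrong, though it is not load-bearing. You claim that because $\sp_{2n}$, $\hbar\gl_r$ and $Z(\g)$ pairwise commute, each $\desc(R_i)$ depends only on $\pr_i(\alpha)$. This is false: the term $\pr_i([\alpha,\beta])$ in $R_i$ sees the full bracket in $\g$, not just the $\pr_i$-components. Indeed the paper's explicit formula $R_2(f+\hbar A,\,g+\hbar B)=\hbar\hat\omega^{ij}(\pa_if(0)\pa_jB_1(0)-\pa_iA_1(0)\pa_jg(0))$ is built entirely from cross terms between the scalar and matrix linear parts, both of which are killed by $\pr_2$. What actually rescues your argument is that you never use this claim: the projected Fedosov equation controls $\pr_i([\alpha,\alpha])$ including all cross terms, and the decomposition $R=R_1+R_2+R_3$ holds by the definition of the $\pr_i$ regardless of any commutativity. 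You should delete that sentence and let the projected Fedosov equation carry the whole weight.
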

\begin{proof}
  For the first two identities, we write down the first few terms of Fedosov connection $\gamma$ \cite{fedosov-bundle}:
  $$\gamma=\omega_{ij}y^idx^j+\frac{1}{8}R_{ijkl}y^iy^jy^kdx^l+\hbar((R_E)_{ij}y^idx^j+(\omega_1)_{ij}y^idx^j)+...$$
Here we omit the terms with weight bigger than 3.  Those term will not appear in our computation of curvature, the Lie bracket of higher weight term will be projected out by $\pr$.

The descent of $R_1$ is given by
$$
R_1(\gamma,\gamma)=\frac{1}{4}R_{ijkl}y^iy^jdx^k\wedge dx^l.
$$
This is the curvature two form of our symplectic manifold. Applying the invariant $\hat{A}$ polynomial, we get $\hat{A}-$genus of $M$. The descent of $R_2$ is
$$
R_2(\gamma,\gamma)=\hbar(R^E_{kl}dx^k\wedge dx^l+\omega_1).
$$
Applying the invariant polynomial $\Tr(e^{R_2})$, we get $e^{\omega_1}\Ch(E).$

For the last identity we use the Fedosov equation \eqref{FEDOSOV} and apply $\pr_3$ to both side
$$
\pr_3(d(A+\gamma_E)+\frac{1}{2}[A+\gamma,A+\gamma])=\pr_3(\omega_{\hbar}),
$$
we get $\desc(R_3)=-\hbar \omega_1+\omega_{\hbar}$.
\end{proof}

\subsection{Universal Algebraic Index Theorem}\label{normalized}
Now we fix the \HC pair $(\g, K)$
$$
\mathfrak{g}=\mathcal{W}^+_{2n}\cdot \text{Id}+\hbar\gl_r(\mathcal{W}^+_{2n})), \quad K=\Sp_{2n}\times \GL_r.
$$
Let $C_{-\bullet}(\gl_r(\mathcal{W}_{2n}))$ denote the cyclic chains of the matrix algebra $\gl_r(\mathcal{W}_{2n})$.

We can write down a trace map (see Section \ref{trace-map}),
$$\text{Tr}:(CC^{per}_{-\bullet}(\gl_r(\mathcal{W}_{2n})),b+uB)\rightarrow (\mathbb{C}\LS{\hbar}[u,u^{-1}],0),$$
which is basically a composition of trace of matrices and the higher trace for Weyl algebras. This is indeed a cochain map, but not a morphism of $(\mathfrak{g},K)$-modules. The observation is that the $\mathfrak{g}$-action on $(CC^{per}_{-\bullet}(\gl_r(\mathcal{W}_{2n})),b+uB)$ is homotopy trivial. This suggests that $\text{Tr}$ could be lifted to Lie algebra cochains  valued in the $(\mathfrak{g}, K)$-module that is the periodic
cyclic chains. This is indeed the case.
\begin{thm}[see Theorem \ref{TRACE}]There is a canonical cocycle in the Lie algebra cochain complex (of cohomology degree $2n$)
$$\uTr\in C^\*_{\Lie}(\mathfrak{g},\mathfrak{h};\Hom_{\mathbb{K}}(CC_{-\*}^{per}(\gl_r(\mathcal{W}_{2n}),\mathbb{K}))$$
whose restriction to
$$C^0_{\Lie}(\mathfrak{g},\mathfrak{h};\Hom_{\mathbb{K}}(CC_{-\*}^{per}(\gl_r(\mathcal{W}_{2n}),\mathbb{K}))$$
coincides with $\Tr$. Here $\mathbb{K}:=\mathbb{C}\LS{\hbar}[u,u^{-1}]$.
\end{thm}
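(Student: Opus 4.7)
The plan is to construct $\uTr$ by coupling the free correlation map to the universal flat connection $\widehat{\Theta}\in C^1_{\Lie}(\mathfrak{g};\mathfrak{g})$, in direct analogy with the physics picture from the introduction. First I would set up the free expectation $\abracket{-}_{free}:CC^{per}_{-\bullet}(\gl_r(\mathcal{W}_{2n}))\to \hOmega^{-*}_{2n}[u,u^{-1}]$ as a sum over Feynman diagrams on $S^1$ whose internal edges carry the integral kernel for the inverse of $\pa_t$ (cf.\ Definition \ref{defn-free-expectation}), and then post-compose with the Berezin integration $\int_{BV}$ over the fermionic BV Lagrangian $H^1(S^1)\otimes\R^{2n}$ to land in $\mathbb{K}=\mathbb{C}\LS{\hbar}[u,u^{-1}]$. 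The composition $\int_{BV}\circ\abracket{-}_{free}$ recovers the naive trace map $\Tr$ on cyclic chains.

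The key move is to promote $\Tr$ into a Lie algebra cochain by inserting the interaction built from $\widehat{\Theta}$. Viewing $\widehat{\Theta}\in C^1_{\Lie}(\mathfrak{g})\otimes \gl_r(\mathcal{W}_{2n})$ through the embedding $\mathfrak{g}\into \gl_r(\mathcal{W}_{2n})$, I would define
\[
\uTr(\alpha):=\int_{BV}\abracket{\alpha\otimes e^{\otimes\widehat{\Theta}/\hbar}}_{free},
\]
with the descendants of $\widehat{\Theta}$ inserted along $S^1$ in the order prescribed by Definition \ref{defn:interaction} so that the matrix ordering matches the cyclic structure on the chains. Each power of $\widehat{\Theta}$ contributes an element of $\wedge^\bullet \mathfrak{g}^\vee$ tensored with a local observable, so $\uTr$ manifestly lands in $C^*_{\Lie}(\mathfrak{g};\Hom_{\mathbb{K}}(CC^{per}_{-*}(\gl_r(\mathcal{W}_{2n})),\mathbb{K}))$, and setting $\widehat{\Theta}=0$ recovers $\Tr$. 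The basic property with respect to $\mathfrak{h}=\sp_{2n}+\hbar\gl_r+Z(\mathfrak{g})$ is then essentially formal: $\Sp_{2n}\times\GL_r$-equivariance is inherited from invariance of the $S^1$ propagator and of the symbol/Berezin map, the two ``inner'' summands of $\mathfrak{h}$ act by Moyal inner derivations which are killed inside the trace, and $Z(\mathfrak{g})$ acts trivially on the cyclic complex modulo exact terms.

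The heart of the argument, and the main obstacle, is the cocycle identity $(\pa_{\Lie}+b^*+uB^*)\uTr=0$. I would reduce this to the combination of two inputs: (i) the free intertwining relations $\abracket{B(-)}_{free}=\d\,\abracket{-}_{free}$ and $\abracket{b(-)}_{free}=\hbar\Delta\,\abracket{-}_{free}$, which are proved by a boundary-of-configuration-space argument on $S^1$ (collision of two adjacent marked points realizes the Hochschild multiplication, and insertion of $1$ from a ``long'' propagator factor realizes Connes' $B$), together with the fact that $\hbar\Delta$ is an exact differential under $\int_{BV}$; and (ii) the universal Maurer--Cartan identity $\pa\widehat{\Theta}+\tfrac{1}{2}[\widehat{\Theta},\widehat{\Theta}]=0$ in $C^*_{\Lie}(\mathfrak{g};\mathfrak{g})$, which when expanded against the exponential $e^{\otimes\widehat{\Theta}/\hbar}$ forces each Chevalley--Eilenberg variation to pair with a collision of two neighboring $\widehat{\Theta}$-insertions. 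The technical subtlety lies in the careful book-keeping of the cyclic ordering of matrix insertions along $S^1$, the signs from the cyclic parameter $u$, and the Koszul signs from $C^*_{\Lie}(\mathfrak{g})$; once these are aligned, the Ward identity from (ii) exactly matches the Hochschild boundary of (i) and combines with the Berezin vanishing to close $\uTr$ under $\pa_{\Lie}+b^*+uB^*$.
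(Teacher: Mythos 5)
Your overall construction is the one the paper uses: the free expectation map on cyclic tensors, the shuffle insertion of $\widehat\Theta/\hbar$ along $S^1$ as in Definition \ref{defn:interaction}, post-composition with the Berezin map $\int_{BV}$, and a cocycle identity obtained by combining the boundary/collision analysis on $S^1_{cyc}[m+1]$ (which produces $b$, the terms $[\widehat\Theta/\hbar,\hat{\mathcal O}_k]_\star$, and $-\tfrac{1}{2\hbar}[\widehat\Theta,\widehat\Theta]_\star$) with the Maurer--Cartan equation $\pa\widehat\Theta+\tfrac12[\widehat\Theta,\widehat\Theta]=0$. This is exactly the route of Theorems \ref{thm-interaction}, \ref{thm-interaction-S1} and \ref{TRACE}.

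There is, however, one genuine gap: you declare the passage to the \emph{relative} complex $C^\*_{\Lie}(\g,\mathfrak{h};-)$ to be ``essentially formal'', and the reasons you offer do not cover all of $\mathfrak{h}$. Membership in $C^\*_{\Lie}(\g,\mathfrak{h};-)$ requires both $K$-equivariance (this part is indeed formal, since $\tr$, $\partial_P$, $\iota_{\Pi}$ and $\sigma$ are all $\Sp_{2n}\times\GL_r$-invariant) and the vanishing of $\uTr[a_1,\dots,a_k]$ whenever some $a_i\in\mathfrak{h}$. For the summands $\hbar\gl_r$ and $Z(\g)$ the inserted observable $\d\bigl(\widehat\Theta(a_i)\bigr)$ vanishes outright because $a_i$ is constant in the $y$'s; but for $a_i\in\sp_{2n}$, which is quadratic in $y$, the insertion $\d a_i$ is nonzero and linear in $y$, so exactly one propagator $\partial_P$ can attach to that vertex, and the vanishing comes from the first-moment identity $\int_0^1\bigl(u-\tfrac12\bigr)\,du=0$ of the specific propagator $P^{S^1}=u-\tfrac12$ after integrating over the position of that vertex. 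This is a gauge-fixing-dependent computation, not a consequence of ``inner derivations being killed inside the trace'': the latter concerns the $\g$-action on the module $\Hom_{\mathbb K}(CC^{per}_{-\*},\mathbb K)$, whereas what is needed here is the contraction condition $\iota_a\uTr=0$ for $a\in\mathfrak{h}$. Without this step your argument only produces a cocycle in the absolute complex $C^\*_{\Lie}(\g;-)$, which is strictly weaker than the statement (and insufficient for the descent construction that the theorem is meant to feed).
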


In the next section, we will give an explicit formula of $\uTr$ in terms of a topological quantum mechanical model. We will call $\uTr$ the universal trace map. When the bundle is not at present so $\g=\mathcal{W}_{2n}^+$, this becomes the universal trace as studied in \cite{nest1995,feigin2005hochschild}.

One application of our topological quantum mechanical interpretation of $\uTr$ is that we can implement the analogue of exact semi-classical approximation to give a simple geometric proof of the following universal algebraic index theorem \cite{feigin1989riemann, feigin2005hochschild}.

\begin{thm}[See Theorem \ref{index thm}]\label{thm-index}  Let $\uTr$ be the universal trace map, which induces a cohomology element
$$
 \uTr\in H^\*_{\Lie}(\mathfrak{g},\mathfrak{h};\Hom_{\mathbb{K}}(CC_{-\*}^{per}(\gl_r(\mathcal{W}_{2n}),\mathbb{K})),\quad \mathbb{K}:=\mathbb{C}\LS{\hbar}[u,u^{-1}].
 $$
Then we have the following index formula
$$
 \uTr(1)= u^ne^{-R_3/u\hbar}\hat{A}(\sp_{2n})_u\cdot \text{Ch}(\GL_r)_u\in H^\*_{Lie}(\mathfrak{g},\mathfrak{h};\mathbb{K}).$$
 Here for a cochain of even degrees $A=\sum_{p\ even}A_p,  A_p\in H^p(\g,\mathfrak{h};\mathbb{K})$, we denote
 $
 A_u:=\sum_{p} u^{-p/2}A_p.
 $
\end{thm}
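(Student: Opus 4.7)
The plan is to exploit the topological quantum mechanical realization of $\uTr$ built in Sections 3--4 and carry out a rigorous version of the physicist's exact semi-classical approximation. By construction, $\uTr(1)$ is the partition function $\int_{BV}\circ\abracket{1}_{int}$ obtained from the free correlator $\abracket{-}_{free}$ dressed with the universal interaction $\widehat{\Theta}\in C^1_{\Lie}(\g;\g)$ and then Berezin integrated over the fermionic Lagrangian inside the zero modes $H^\bullet(S^1)\otimes\R^{2n}$. Its Feynman diagrammatic expansion has vertices labelled by $\widehat{\Theta}$ viewed inside $\gl_r(\W_{2n})$, propagators equal to the integral kernel of $\partial_t^{-1}$ on $S^1$, and an outer matrix trace enforcing cyclicity; the Maurer--Cartan identity $\pa_{\Lie}\widehat{\Theta}+\tfrac12[\widehat{\Theta},\widehat{\Theta}]=0$, which is simply the Jacobi identity of $\g$, guarantees that the resulting sum lives in $C^\bullet_{\Lie}(\g,\mathfrak{h};\mathbb{K})$ as a cocycle.

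First, I would establish that $[\uTr(1)]$ is essentially independent of $\hbar$. Viewing $\hbar$ as a deformation parameter for the family of algebras $\gl_r(\W_{2n})$, Getzler's Gauss--Manin connection on periodic cyclic homology lifts to the relative Lie algebra cochain complex and acts on $[\uTr(1)]$ by insertion of $\hbar\partial_\hbar\widehat{\Theta}$. Away from the central direction this insertion is $(\pa_{\Lie}+b+uB)$-exact, while the $Z(\g)$-projection produces a universal anomaly represented by the $2$-cocycle $R_3$ of Section \ref{sec:Char}. Integrating this flow in $\hbar$ yields the overall factor $e^{-R_3/u\hbar}$ and reduces the problem to the computation of the normalized class $e^{R_3/u\hbar}\uTr(1)$ in the formal limit $\hbar\to 0$.

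Second, I would show the diagrammatic expansion truncates to its tree and one-loop part. Using the Connes operator $B$ as an explicit chain homotopy, one checks that any connected Feynman graph with two or more internal loops assembles into a $(b+uB)$-coboundary on the cyclic chain side: each extra propagator $\partial_t^{-1}$ turns the associated angular integration on $S^1$ into the total derivative of a cyclically inserted observable, whose boundary contributions match the image of $B$ after shuffling of the matrix-valued vertices. This Connes-operator cancellation is what replaces the large-$N$ and Lie-algebra-cohomology reductions of \cite{feigin2005hochschild,WILLWACHER2015277,PFLAUM20101958}, and I expect it to be the main technical obstacle, since one must simultaneously track the matrix cyclic ordering, the Berezin integration and the $u$-grading of the periodic complex.

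Finally, I would evaluate the surviving tree and one-loop contributions. The Berezin integration over the fermionic zero modes contributes the prefactor $u^n$ once rewritten in the periodic variable of degree $2$. The annulus one-loop diagrams factorize into two commuting pieces according to the decomposition $\widehat{\Theta}=\pr_1+\pr_2+\pr_3$ of Section \ref{sec:Char}: the $\sp_{2n}$-piece produces $[\det(\tfrac{R_1/2}{\sinh(R_1/2)})]^{1/2}$ from the functional determinant of $\partial_t-R_1$ on $S^1$, that is $\hat{A}(\sp_{2n})_u$, while the $\gl_r$-piece produces $\Tr(e^{R_2})=\Ch(\gl_r)_u$ from the matrix trace of the holonomy of $R_2$. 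Reassembling with the central factor $e^{-R_3/u\hbar}$ and the prefactor $u^n$ yields the stated formula, which via the descent map of Proposition \ref{prop-char-desc} recovers the classical algebraic index theorem.
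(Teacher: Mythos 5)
Your overall architecture coincides with the paper's: an $\hbar$-variation (Gauss--Manin) argument controlling $\uTr(1)$ up to the central anomaly $R_3$ (Proposition \ref{con:EACT}), followed by an explicit tree and one-loop Feynman evaluation producing $u^n$, $\hat{A}(\sp_{2n})_u$ and $\Ch(\gl_r)_u$ (Proposition \ref{ONELOOP}). Your first and third steps are essentially right, modulo small imprecisions: the vertices of the expansion are the components $\widehat{\gamma}_i$ of $\widehat{\gamma}=\widehat{\Theta}-\pr$ graded by $\hbar$-degree, not the projections $\pr_i$ themselves (those are killed by $\mathfrak{h}$-relativity), and the asserted factorization of the $\sp_{2n}$-wheels from the matrix-valued $\gl_r$-diagrams requires the observation that loop-number-zero vertices are scalar matrices and can therefore be moved freely inside the trace.

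The genuine gap is your second step. You propose to kill every connected graph with two or more loops by a graph-level homotopy, claiming that each extra propagator turns the angular integration on $S^1$ into a total derivative realizing a $(b+uB)$-coboundary. No such mechanism is available: the total-derivative/boundary argument in the paper (the anticommutation of $d-\hbar\Delta$ with $\hbar\pa_P+D$) is what proves the intertwining $\abracket{b(-)}_{free}=\hbar\Delta\abracket{-}_{free}$ and says nothing about loop order, and individual higher-loop graphs are not $B$- or $(b+uB)$-exact. The correct route --- for which you already have all the ingredients --- is elementary power counting combined with your own step one: a diagram contributes with weight $\hbar^{g(\Gamma)-k(\Gamma)}$; after exponentiating the connected trees, which give exactly $e^{-\hat{\omega}_0/u\hbar}$ and cancel the leading part of $e^{R_3/u\hbar}$, every remaining diagram satisfies $g(\Gamma)-k(\Gamma)\geq 0$ with equality precisely when each connected component is one-loop. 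Hence $e^{R_3/u\hbar}\uTr(1)$ has only non-negative powers of $\hbar$, and its strictly positive $\hbar$-weight part is $\partial_{\Lie}$-exact because $\nabla_{\hbar\pa_\hbar}\bigl(e^{R_3/u\hbar}\uTr(1)\bigr)$ is $\partial_{\Lie}$-exact while $\nabla_{\hbar\pa_\hbar}$ acts on the $\hbar^m$-component by multiplication by $m$ after the symbol map. This also exposes why your phrase ``the formal limit $\hbar\to 0$'' is not yet meaningful as stated: without first establishing the non-negativity of the $\hbar$-powers of the normalized class, that limit does not exist.
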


Now we explain why this formula leads to the usual algebraic index formula for vector bundles on manifolds, by descent construction. We explain the version in \cite{fedosov-bundle}.

The quantum observables $W_D$ is ($D$ is the Fedosov connection)
$$
W_D=\fbracket{s\in \Gamma(M,   \mc W^+_M \otimes \End(E))|  Ds=0 }.
$$
We apply the Gelfand-Fuks descent to the universal trace map:
$$\desc(\uTr(-))\in \Omega^\*(M,E^{per}).$$
Here
$$E^{per}:=(F_{\Sp}(M)\times_M Fr(E))\times_{(\Sp_{2n}\times \GL_r)}\Hom_{\mathbb{K}}(CC_{-\*}^{per}(\gl_r(\mathcal{W}_{2n})),\mathbb{K}).$$
$\uTr$ is $D+b+uB$ closed by construction. Restrict to flat sections of $D$, we get a cochain map
$$\desc(\uTr):(CC_{-\*}^{per}(W_D),b+uB)\rightarrow (\Omega^\*(M)\LS{\hbar}[u,u^{-1}],d_{dR})$$
of degree $2n$. We can apply it to $f\in W_D$ and integrate over $M$, to get a trace
$$
  \Tr(f)= \int_M \desc(\uTr(f))\in \C\LS{\hbar}.
 $$
The value does not depend on $u$, by the degree reason. Using the explicit formula of $\uTr$ in Theorem \ref{TRACE}, we can see that it satisfies the normalized property
   $$ \Tr(f)=\frac{(-1)^n}{\hbar^n}\bracket{\int_M\text{tr}(f)\frac{\omega^n}{n!}+O(\hbar)},\forall f\in \Gamma(M,\End(E)).$$
Therefore
 $$\Tr:W_D\rightarrow \mathbb{C}\LS{h}$$
is indeed the canonical trace in \cite{fedosov-bundle,nest1995}, and the universal formula implies the geometric one by descent. See Section \ref{sec: GM} for further details.

\section{Topological Quantum Mechanics and the Algebraic Index}\label{section:BV}
Throughout this section we fix the notation
$$\mathfrak{g}:=\mathcal{W}^+_{2n}\cdot\text{Id}+\hbar \gl_r(\mathcal{W}_{2n}^+), \quad \mathfrak{h}:=\mathfrak{sp}_{2n}\oplus\hbar\gl_r\oplus\mathbb{C}\oplus_{i>1}\hbar^i\mathbb{C},$$
$$\mathbb{K}:=\mathbb{C}\LS{\hbar}[u,u^{-1}].$$
The main goal of this section is to give a natural and explicit construction of the universal trace
$$
\uTr\in C_{\Lie}^\*(\mathfrak{g},\mathfrak{h};\Hom_{\mathbb{K}}(CC_{-\*}^{per}(\gl_r(\mathcal{W}_{2n})),\mathbb{K})).
$$
This leads to a simple geometric proof of the universal algebraic index theorem (Theorem \ref{thm-index}) by a one-loop Feynman diagram computation in terms of a version of exact semi-classical approximation.

\subsection{Some Notations} We set-up some basic notations in this section and describe a simple BV algebra that will be used to construct the universal trace. We will write
$$
y^1=p^1,\dots, y^n=p^n, y^{n+1}=q^1,\dots, y^{2n}=q^n
$$
so that $
  \C[p^i,q^i]=\C[y^j].
$
The Poisson tensor is expressed in terms of the skew-symmetric matrix $\hat\omega^{ij}$
$$
\Pi=\frac{1}{2}\sum_{i=1}^{n}\bracket{\frac{\partial}{\partial {p^i}}\otimes\frac{\partial}{\partial {q^i}}-\frac{\partial}{\partial {q^i}}\otimes\frac{\partial}{\partial {p^i}}}=\frac{1}{2}\sum_{i,j=1}^{2n}\hat\omega^{ij}\partial_{y^i}\otimes \partial_{y^j}.
$$

%\subsubsection{BV algebra }

\begin{defn}\label{de Rham}
We denote $\hOmega^{-\*}_{2n}$ to be the following formal de Rham algebra
$$\hOmega^{-\*}_{{2n}}:=\C\PS{y^i,dy^i}\LS{\hbar}.
$$
Here the 1-forms $dy^i$ have cohomology degree $-1$ and anticommute $dy^i\cdot dy^j=-dy^j\cdot dy^i$. We have put a $\hbar$ coefficient for convenience. With respect to the natural decomposition
$$\hOmega^{-\*}_{{2n}}=\bigoplus\limits_p \hOmega^{-p}_{{2n}},
$$
our degree assignment is such that p-forms $\hOmega^{-p}_{{2n}}$ sits in degree $-p$.
\end{defn}

Let $\d$ be the de Rham differential of degree -1
$$\d:\hOmega^{-\*}_{{2n}}\rightarrow \hOmega^{-\*-1}_{{2n}}.$$
The constant bivector $\Pi$ defines a map of degree 2 via the contraction
$$\iota_{\Pi}:\hOmega^{-\*}_{{2n}}\rightarrow \hOmega^{-\*+2}_{{2n}}.$$

  We define the BV operator $\Delta$ by the Lie derivative with respect to $\Pi$
  $$\Delta:=\mathcal{L}_{\Pi}=[\d,\iota_{\Pi}]:\hOmega^{-\*}_{{2n}}\rightarrow \hOmega^{-\*+1}_{{2n}}.$$
In terms of $y^i$,
  $$\iota_{\Pi}=\frac{1}{2}\hat\omega^{ij}\iota_{\partial_{y^i}}\iota_{\partial_{y^j}},   \quad \Delta=\hat\omega^{ij}\mathcal{L}_{\partial_{y^i}}\iota_{\partial_{y^j}}.$$

%It is easy to check that $\Delta^2=0$, and the failure of $\Delta$ being a derivation defines a BV bracket on $\hOmega^{-\*}_{{2n}}$
%$$\{a,b\}_{\Delta}:=\Delta(ab)-(\Delta a)b-(-1)^{\deg(a)}a(\Delta b),\quad \forall a,b\in \hOmega^{-\*}_{{2n}}.$$

We denote the k-th tensor product of $\hOmega^{-\*}_{2n}$ by
  $$(\hOmega^{-\*}_{{2n}})^{\otimes k}:=\hOmega^{-\*}_{{2n}}\otimes_{\C\LS{\hbar}}\cdot\cdot\cdot\otimes_{\mathbb{C}\LS{\hbar}}\hOmega^{-\*}_{{2n}}.$$
%  Further, we define the "multiplication map"
%  $$\textbf{Mult}:(\hOmega^{-\*}_{{2n}})^{\otimes k}\rightarrow \hOmega^{-\*}_{{2n}},   \quad a_1\otimes\cdot\cdot\cdot \otimes a_k\rightarrow a_1\cdot a_2\cdot\cdot...\cdot a_k$$
 % by the natural commutative product. %This definition is $C^*_{Lie}(\mathfrak{g},\mathfrak{h};\mathbb{C}((\hbar)))-$linearly extended to $C^*(\mathfrak{g},\mathfrak{h};\Omega^{-*}_{\mathcal{W}_{2n}})$.
We extend the definition of $\d,\iota_{\Pi},\Delta$ to $(\hOmega^{-\*}_{{2n}})^{\otimes k}$ by declaring
   \begin{align*}\d  (a_1\otimes\cdot\cdot\cdot\otimes a_k)&:=\sum_{1\leq \alpha\leq k}\pm a_1\otimes\cdot\cdot\cdot\otimes \d a_{\alpha}\otimes\cdot\cdot\cdot\otimes a_k\\
    \iota_{\Pi}(a_1\otimes\cdot\cdot\cdot\otimes a_k)&:=\frac{1}{2}\sum_{1\leq\alpha,\beta\leq k}\pm \hat\omega^{ij}a_1\otimes\cdot\cdot\cdot\otimes\iota_{\partial_{y^i}}a_{\alpha}\otimes\cdot\cdot\cdot \otimes \iota_{\partial_{y^j}}a_{\beta}\otimes\cdot\cdot\cdot\otimes a_k \\
    \Delta(a_1\otimes\cdot\cdot\cdot\otimes a_k)&:=\sum_{1\leq\alpha,\beta\leq k}\pm\hat\omega^{ij}a_1\otimes\cdot\cdot\cdot\mathcal{L}_{\partial_{y^i}}a_{\alpha}\otimes\cdot\cdot\cdot\otimes\iota_{\partial_{y^j}}a_{\beta}\otimes\cdot\cdot\cdot\otimes a_k.
    \end{align*}
    Here $\pm$ are Koszul signs by passing odd operators through the graded objects (see e.g. \cite{QFT}).

\subsection{Expectation values} In this section, we construct versions of expectation value maps that allows us to evaluate tensors in $\gl_r(\mathcal{W}_{2n})$. They arise naturally from topological quantum mechanics, which can be viewed as the process of integrating out massive modes in quantum field theory.

\subsubsection{Free expectation value map}
Let
$$
\Conf_{S^1}[m+1]=\{(p_0,p_1, \dots, p_m)\in (S^1)^{m+1}| p_i\neq p_j\} \subset (S^1)^{m+1}
$$
be the configuration space of $m$ ordered points on the circle $S^1$. Let
$$
 \Cyc_{S^1}[m+1]=\{(p_0,p_1,\dots,p_m)\in \Conf_{S^1}[m+1]| p_0, \dots, p_m\ \text{are anti-clockwise cyclic ordered} \}
$$
be the connected component of $\Conf_{S^1}[m+1]$ where points have the prescribed cyclic order.

\begin{figure}[h]

\definecolor{rvwvcq}{rgb}{0.08235294117647059,0.396078431372549,0.7529411764705882}
\begin{tikzpicture}[line cap=round,line join=round,>=triangle 45,x=1cm,y=1cm,scale=0.8,transform shape]
\clip(-6.87,-3.54) rectangle (7.47,4.3);
\draw [line width=2pt] (-0.01,0) circle (2.285607140345865cm);
\draw (-0.45,3.3) node[anchor=north west] {{$p_0$}};
\draw (-4.45,0.18) node[anchor=north west] {{}};
\draw (1.53,2.3) node[anchor=north west] {{$p_m$}};
\draw (2.63,0.94) node[anchor=north west] {{}};
\draw (-2.5,2.2) node[anchor=north west] {{$p_1$}};
\draw (-3.29,-1) node[anchor=north west] {{$\cdots$}};
\draw (-5.2,-1) node[anchor=north west] {{}};
\draw (2.27,-0.28) node[anchor=north west] {{$\cdot\cdot\cdot$}};
\draw (2.05,-1.18) node[anchor=north west] {{$p_k$}};
\draw (-0.13,-2.26) node[anchor=north west] {{}};
\draw (-2.41,-1.54) node[anchor=north west] {{}};
\begin{scriptsize}
\draw [fill=rvwvcq] (-2.29,0.16) circle (2.5pt);
\draw [fill=rvwvcq] (-0.1476741265598887,2.281456954421006) circle (2.5pt);
\draw [fill=rvwvcq] (-1.7250258764177986,1.5108561292252032) circle (2.5pt);
\draw [fill=rvwvcq] (1.4922446916995062,1.7225739131487674) circle (2.5pt);
\draw [fill=rvwvcq] (2.25417072637637,0.31229941053467203) circle (2.5pt);
\draw [fill=rvwvcq] (1.8207956964465248,-1.3682788889231925) circle (2.5pt);
\draw [fill=rvwvcq] (0.2272102048344094,-2.273264462996424) circle (2.5pt);
\draw [fill=rvwvcq] (-1.5107707124277856,-1.7238582507616451) circle (2.5pt);
\end{scriptsize}
\end{tikzpicture}

  \caption{}\label{CYCLICS}
\end{figure}

$\Cyc_{S^1}[m+1]$ has a natural compactification as follows. Let us identify $S^1=\R/\Z$ so that the total length of $S^1$ is 1. Given a cyclic ordered points $(p_0,\dots, p_m)$ on $S^1$, let $u_{i,i+1}$ denote the oriented distance by traveling  from $p_i$ to $p_{i+1}$ anti-clockwise ($p_{m+1}\equiv p_0$). Let $\Delta_m$ denote the standard simplex
$$
\Delta_m=\{(\lambda_0,\dots, \lambda_m)\in \R^{n+1}| \lambda_i\geq 0, \sum_{i=0}^m \lambda_i=1\}.
$$
Let $\Delta_{m}^o$ be the interior of $\Delta_m$. Then we have a natural identification
\begin{align*}
\Cyc_{S^1}[m+1] & \iso S^1\times \Delta_{m}^o\\
(p_0,\dots,p_m) & \to \{p_0\}\times (u_{0,1},u_{1,2},\dots, u_{m,0}).
\end{align*}
This allows us to compactify $\Cyc_{S^1}[m+1]$ by  $S^1\times \Delta_{m}$, which will be denoted by $S^1_{cyc}[m+1]$.

Similarly, we can compactify the whole space $\Conf_{S^1}[m]$, denoted by ${S^1}[m]$. ${S^1}[m]$ is a manifold with corners. Alternately, it could be constructed via successive real-oriented blow ups of diagonals in $(S^1)^m$ as described in \cite{axelrod1993chern,kontsevich1994feynman,getzler1994operads}. In particular, it carries a natural blow-down map
$$\pi:S^1[m]\rightarrow (S^1)^m.
$$

For example,  $S^1[2]$ is parametrized as a cylinder
$$S^1[2]=\{(e^{2\pi i\theta},u)|0\leq\theta<1,0\leq u\leq 1\}.$$
With this parametrization, the blow down map is
$$\pi:S^1[2]\rightarrow (S^1)^2,  \quad (e^{2\pi i\theta},u)\rightarrow (e^{2\pi i\theta},e^{2\pi i\theta+u}).$$
If we denote an element of $(S^1)^2$ by two ordered points $(p_0, p_1)$, then $u$ is the oriented distance by traveling from $p_0$ to $p_1$ anti-clockwise.

\begin{defn}We define the \textit{propagator} $P^{S^1}$ to be the following function on $S^1[2]$:
$$P^{S^1}(e^{2\pi i\theta},u)=u-\frac{1}{2}.$$
\end{defn}
Geometrically, $P^{S^1}$ represents the integral kernel of $``d^{-1}"$ on the circle. It is precisely the propagator for the free topological quantum mechanics.

Now we can formulate Feynman graph integrals on $S^1[k]$ as in \cite{axelrod1993chern,kontsevich1994feynman,getzler1994operads}.

\begin{defn} Let $\Omega^\*_{S^1[k]}$ be smooth differential forms on $S^1[k]$.
  We define the $\Omega^\*_{S^1[k]}-$linear operator
  $$\partial_P:\Omega^\*_{S^1[k]}\otimes (\hOmega^{-\*}_{{2n}})^{\otimes k}\rightarrow \Omega^\*_{S^1[k]}\otimes (\hOmega^{-\*}_{{2n}})^{\otimes k}$$
  by
  \begin{align*}
  \partial_{P}(a_1\otimes\cdots\otimes a_k)&:=\frac{1}{2}\sum_{1\leq\alpha\neq\beta\leq k}\pi^*_{\alpha\beta}(P^{S^1})\otimes \bracket{\hat\omega^{ij}a_1\otimes\cdots\otimes\mathcal{L}_{\partial_{y^i}}a_{\alpha}\otimes\cdots\otimes\mathcal{L}_{\partial_{y^j}}a_{\beta}\otimes...\otimes a_k}
%  D(a_1\otimes...\otimes a_k)&:=\sum_{\alpha}\pm d\theta_{\alpha}\otimes \bracket{ a_1\otimes...\otimes \d a_{\alpha}\otimes...\otimes a_k}.
  \end{align*}
  Here $a_i\in \hat\Omega^{-\*}_{{2n}}$ and $\pi_{\alpha\beta}:S^1[m]\rightarrow S^1[2]$ is the forgetful map to the two points indexed by $\alpha,\beta$. $\theta_{\alpha}\in [0,1)$ is the parameter on the $S^1$ indexed by $\alpha$ and $d\theta_\alpha$ is viewed as a 1-form in $S^1[k]$ via the pull-back $\pi_{\alpha}:S^1[m]\rightarrow S^1$.
\end{defn}

We also denote
$$
\int_{S^1[k]}: \Omega^\*_{S^1[k]}\otimes (\hOmega^{-\*}_{{2n}})^{\otimes k}\to (\hOmega^{-\*}_{{2n}})^{\otimes k}
$$
by integrating out differential forms on $S^1[k]$.

We define expectation value map for free topological quantum mechanics as follows.
\begin{defn}\label{defn-free-expectation}
We define the free expectation value map
$$\langle-\rangle_{free} : C_{-\*}(\gl_r(\mathcal{W}_{2n}))\rightarrow \hOmega^{-\*}_{{2n}}$$
by
$$\langle \hat{\mathcal{O}}_0\otimes \hat{\mathcal{O}}_1\otimes ...\otimes \hat{\mathcal{O}}_m\rangle_{free}=\tr{\int_{S^1_{cyc}[m+1]} d\theta_0 \cdots d\theta_{m}e^{\hbar\partial_P}\bracket{ \hat{\mathcal{O}}_0\otimes \d \hat{\mathcal{O}}_1\otimes \d\hat{\mathcal{O}}_2\otimes \cdots\otimes \d \hat{\mathcal{O}}_m}}\in \hOmega^{-m}_{2n}. $$
Here $\tr$ is the composition
$$
\tr:   \gl_r(\hOmega^{-\*}_{2n})\otimes \cdots \otimes \gl_r(\hOmega^{-\*}_{2n})\stackrel{\text{multiplying}}{\to} \gl_r(\hOmega^{-\*}_{2n})\stackrel{\text{trace}}{\to} \hOmega^{-\*}_{2n}
$$
of multiplying $r\times r$ matrices valued in $\hOmega^{-\*}_{2n}$ with an ordinary trace of the matrix.

\end{defn}

\begin{lem}
The expectation value map intertwines the de Rham differential $\d$ with the Connes operator $B$
$$\d \langle \hat{\mathcal{O}}\rangle_{free}=\langle B(\hat{\mathcal{O}})\rangle_{free}, \quad \quad \hat{\mathcal{O}}=\hat{\mathcal{O}}_0\otimes \hat{\mathcal{O}}_1\otimes\cdots\otimes \hat{\mathcal{O}}_m.$$
\end{lem}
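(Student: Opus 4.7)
The plan is to move $\d$ inside the integral on the left, simplify using $\d^2=0$, and show that the right side reduces to the same expression by observing that the inserted unit $1$ is decoupled from the propagator.

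For the left side, $\d$ commutes with the matrix trace, with the integration over $S^1_{cyc}[m+1]$, and with each $\mathcal{L}_{\partial_{y^j}}$ (hence with $e^{\hbar\partial_P}$). Pulling $\d$ inside and using that $\d$ is a graded derivation on the tensor product with $\d^2=0$, only the fully differentiated term survives:
\[
\d\langle\hat{\mathcal{O}}\rangle_{free} = \tr\int_{S^1_{cyc}[m+1]} d\theta_0\cdots d\theta_m\; e^{\hbar\partial_P}(\d\hat{\mathcal{O}}_0\otimes\d\hat{\mathcal{O}}_1\otimes\cdots\otimes\d\hat{\mathcal{O}}_m).
\]

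For the right side, expand $B(\hat{\mathcal{O}})=\sum_{i=0}^m(-1)^{mi}\,1\otimes\hat{\mathcal{O}}_i\otimes\cdots\otimes\hat{\mathcal{O}}_{i-1}$. In the $i$-th term the unit $1$ occupying slot $0$ at position $\theta_*$ is annihilated by every $\mathcal{L}_{\partial_{y^j}}$, so all pairings in $\partial_P$ that touch slot $0$ vanish; moreover $1$ is the identity matrix. Thus $e^{\hbar\partial_P}$ effectively pairs $\d\hat{\mathcal{O}}_\alpha$ at position $\phi_\alpha$ with $\d\hat{\mathcal{O}}_\beta$ at $\phi_\beta$ via $P^{S^1}(\phi_\alpha,\phi_\beta)$, the same pairings as in the standard $e^{\hbar\partial_P}(\d\hat{\mathcal{O}}_0\otimes\cdots\otimes\d\hat{\mathcal{O}}_m)$. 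Super-cyclicity of $\tr$ on the odd-form-degree entries $\d\hat{\mathcal{O}}_k$ supplies a sign $(-1)^{mi}$ that cancels against the $(-1)^{mi}$ in the definition of $B$, so each cyclic-rotation term contributes the common integrand $\tr\circ e^{\hbar\partial_P}(\d\hat{\mathcal{O}}_0\otimes\cdots\otimes\d\hat{\mathcal{O}}_m)$.

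Letting $\theta_*$ be the position of the unit and $\phi_0,\ldots,\phi_m$ the positions of $\hat{\mathcal{O}}_0,\ldots,\hat{\mathcal{O}}_m$ (in anti-clockwise cyclic order), the $i$-th term of $B$ integrates exactly over configurations where $\theta_*$ lies in the open arc $(\phi_{i-1},\phi_i)$. Summing over $i=0,1,\ldots,m$ sweeps $\theta_*$ through every arc of $S^1\setminus\{\phi_0,\ldots,\phi_m\}$. Since the integrand is independent of $\theta_*$, the $S^1$-integration contributes $\int_{S^1}d\theta_*=1$, producing
\[
\langle B(\hat{\mathcal{O}})\rangle_{free} = \tr\int_{S^1_{cyc}[m+1]} d\phi_0\cdots d\phi_m\; e^{\hbar\partial_P}(\d\hat{\mathcal{O}}_0\otimes\cdots\otimes\d\hat{\mathcal{O}}_m),
\]
which matches $\d\langle\hat{\mathcal{O}}\rangle_{free}$ above.

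The principal difficulty is sign bookkeeping: three distinct factors of $(-1)^{mi}$ arise---from the definition of $B$, from super-cyclicity of the graded trace, and from the permutation identity $d\theta_0'\wedge\cdots\wedge d\theta_{m+1}'=(-1)^{mi}\,d\theta_*\wedge d\phi_0\wedge\cdots\wedge d\phi_m$---and must be shown to combine into a single positive integration over $S^1\times S^1_{cyc}[m+1]$ rather than an alternating sum. Verification in small cases ($m=1,2$ at order $\hbar^0$, where $\d\langle\hat{\mathcal{O}}\rangle$ and $\langle B(\hat{\mathcal{O}})\rangle$ both evaluate to $\tfrac{1}{m!}\tr(\d\hat{\mathcal{O}}_0\cdots\d\hat{\mathcal{O}}_m)$) confirms that each term of $B$ contributes with the same sign after the two intrinsic $(-1)^{mi}$'s cancel, so summing genuinely produces $\int_{S^1}d\theta_*$ rather than a signed sum.
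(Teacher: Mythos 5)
Your proposal is correct and follows essentially the same route as the paper's proof: both reduce each cyclic-rotation term of $B$ to the fully differentiated integrand $e^{\hbar\partial_P}(\d\hat{\mathcal{O}}_0\otimes\cdots\otimes\d\hat{\mathcal{O}}_m)$ by noting that $\partial_P$ cannot act on the unit, then integrate out the unit's position over $S^1$ and identify the result with $\d\langle\hat{\mathcal{O}}\rangle_{free}$ via the Leibniz rule and $\d^2=0$. Your added attention to the cancellation of the $(-1)^{mi}$ signs makes explicit what the paper leaves implicit under "cyclic symmetry of the trace."
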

\begin{proof}
We use the explicit formula of $B:\gl_r(\mathcal{W}_{2n})^{\otimes m+1}\rightarrow \gl_r(\mathcal{W}_{2n})^{\otimes m+2}$
$$B(\hat{\mathcal{O}}_0\otimes \hat{\mathcal{O}}_1\otimes ...\otimes \hat{\mathcal{O}}_m)=\sum_{i=0}^{m}(-1)^{mi}(1\otimes \hat{\mathcal{O}}_i\otimes \cdots \otimes \hat{\mathcal{O}}_m\otimes \hat{\mathcal{O}}_0\otimes...\otimes \hat{\mathcal{O}}_{i-1}).$$
Then
$$
\langle B(\hat{\mathcal{O}})\rangle_{free}=\tr \int_{S^1_{cyc}[n+2]}d\theta_0\cdots d\theta_{m+1}e^{\hbar\partial_P}\bracket{\sum_{i=0}^{m}(-1)^{mi}(1\otimes \d \hat{\mathcal{O}}_i\otimes\cdots\otimes \d \hat{\mathcal{O}}_m\otimes \d \hat{\mathcal{O}}_0\otimes...\otimes \d \hat{\mathcal{O}}_{i-1})}.
$$
Since the propagator $\pa_P$ can not be applied the the $1$-factor, the integrand does not depend on $\theta_0$. Using the cyclic symmetry of the trace, the above arbitrary insertion of $\hat \OO_0$ is equivalent to allowing $\theta_0$ to run over the whole $S^1$ while preserving the cyclic order of $\hat \OO_0, \hat\OO_1, \cdots, \hat\OO_m$. We can integrate out $\theta_0$, and it follows that

\begin{align*}\langle B(\hat{\mathcal{O}})\rangle_{free}
&=\tr \int_{S^1_{cyc}[m+1]}d\theta_0 \cdots d\theta_{m} e^{\hbar\partial_P}( \d \hat{\mathcal{O}}_0\otimes \d \hat{\mathcal{O}}_1\otimes\cdots\otimes  \d \hat{\mathcal{O}}_m)\\
&=\d \tr \int_{S^1_{cyc}[m+1]}d\theta_0 \cdots d\theta_{m} e^{\hbar\partial_P}( \hat{\mathcal{O}}_0\otimes \d \hat{\mathcal{O}}_1\otimes\cdots\otimes  \d \hat{\mathcal{O}}_m)\\
&=\d \langle \hat{\mathcal{O}}\rangle_{free}.
\end{align*}

\iffalse
$$d_{\mathcal{W}_{2n}}\langle \hat{\mathcal{O}}\rangle_{free}=d_{\mathcal{W}_{2n}}\trace\circ \textbf{Mult}\int_{Conf[n+1]}e^{\partial_P+D}(\hat{\mathcal{O}}_0d\theta^0\otimes \hat{\mathcal{O}}_1\otimes...\otimes \hat{\mathcal{O}}_n)$$
$$=d_{\mathcal{W}_{2n}}Tr\circ \textbf{Mult}\int_{Conf[n+1]}e^{\hbar\partial_P}(\hat{\mathcal{O}}_0d\theta^0\otimes d\theta^1d_{\mathcal{W}}\hat{\mathcal{O}}_1\otimes...\otimes d\theta^n d_{\mathcal{W}} \hat{\mathcal{O}}_n)$$
$$=Tr\circ \textbf{Mult}\int_{Conf[n+1]}e^{\hbar\partial_P}(d_{\mathcal{W}}\hat{\mathcal{O}}_0d\theta^0\otimes d\theta^1d_{\mathcal{W}}\hat{\mathcal{O}}_1\otimes...\otimes d\theta^n d_{\mathcal{W}} \hat{\mathcal{O}}_n).$$
\fi
\end{proof}

\begin{lem}
The expectation value map intertwining the operator $\hbar\Delta$ and the Hochschild boundary operator
$$\hbar\Delta\langle \hat{\mathcal{O}}\rangle_{free}=\langle b(\hat{\mathcal{O}})\rangle_{free}.$$
\end{lem}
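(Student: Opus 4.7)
The proof will parallel the previous lemma (which related $\d$ with $B$), but with the role of integrating out a marker point now replaced by a Stokes' theorem argument on the boundary of the compactified cyclic configuration space $S^1_{cyc}[m+1]$. The conceptual picture is that the Hochschild boundary $b$ arises from collisions of cyclically adjacent insertion points on $S^1$, and these collisions are Stokes-dual to the action of the BV operator $\hbar\Delta$ on the expectation value. The Moyal product of two adjacent factors $\hat{\mathcal{O}}_i\cdot\hat{\mathcal{O}}_{i+1}$ emerges from the limit of the propagator $P^{S^1}\to -\tfrac12$ at each collision boundary.

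Concretely, I would first pull $\hbar\Delta = \hbar\hat\omega^{jk}\mathcal{L}_{\partial_{y^j}}\iota_{\partial_{y^k}}$ inside the integral, past $\tr$ and the multiplication. Since $\mathcal{L}_{\partial_{y^j}}$ and $\iota_{\partial_{y^k}}$ act on the fully multiplied product by Leibniz, we obtain a sum of cross-term bivector contractions between distinct tensor positions; the diagonal ($\alpha=\beta$) pieces vanish because $\Delta$ annihilates $\hat{\mathcal{O}}_0$ (a $0$-form in $y$) and each $\d\hat{\mathcal{O}}_\alpha$ (using $\Delta\d=\d\Delta$ and $\Delta$ on $0$-forms equals zero). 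The remaining cross-term operators $\hbar\hat\omega^{jk}\mathcal{L}^{(\alpha)}_{\partial_{y^j}}\iota^{(\beta)}_{\partial_{y^k}}$ for $\alpha\neq\beta$ closely resemble the bivector contractions appearing in $\hbar\partial_P$, but with $\iota$ in place of one $\mathcal{L}$.

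Next, I would use the key identity
\[
d_{S^1[m+1]}\bigl(\pi^*_{\alpha\beta}P^{S^1}\bigr) \;=\; \pi^*_{\alpha\beta}(du) \;=\; d\theta_\beta - d\theta_\alpha,
\]
which allows each cross-term pairwise contraction to be written as the $d_{S^1[m+1]}$-derivative of an integrand involving $P^{S^1}$. Stokes' theorem then converts these bulk exact-form integrals into boundary contributions over the codimension-$1$ strata of $S^1_{cyc}[m+1]$. The boundary consists of $m+1$ strata corresponding to cyclically adjacent collisions $\theta_i\sim\theta_{i+1}$ (indices mod $m+1$). On the $(i,i+1)$-stratum, $\pi^*_{i,i+1}P^{S^1}$ takes the value $-\tfrac12$, causing the $(i,i+1)$ factor of $e^{\hbar\partial_P}$ to collapse to $\exp(-\hbar\hat{\Pi}^{(i,i+1)})$ acting on $\hat{\mathcal{O}}_i\otimes\hat{\mathcal{O}}_{i+1}$; after multiplication and using antisymmetry of $\hat{\Pi}$ in accordance with the cyclic orientation, this produces precisely the Moyal product $\hat{\mathcal{O}}_i\cdot\hat{\mathcal{O}}_{i+1}$ and contributes the $(-1)^i$-signed term of $\langle b(\hat{\mathcal{O}})\rangle_{free}$. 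The wrap-around stratum $\theta_m\sim\theta_0$ yields the cyclic term $(-1)^m\hat{\mathcal{O}}_m\hat{\mathcal{O}}_0\otimes\cdots\otimes\hat{\mathcal{O}}_{m-1}$, and higher-codimension strata vanish by dimension count (standard for Kontsevich-type configuration space integrals).

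The main obstacle will be sign and orientation bookkeeping. Specifically, one must match the Koszul signs from moving $\mathcal{L}_{\partial_{y^j}}$ and $\iota_{\partial_{y^k}}$ past the various $\d\hat{\mathcal{O}}_\alpha$ factors, the orientation signs coming from the face of the simplex $\Delta_m$ at $u_{i,i+1}=0$, and the cyclic ordering of the Moyal product (so as to obtain $\hat{\mathcal{O}}_i\star\hat{\mathcal{O}}_{i+1}$ in the correct order rather than its reverse). A sanity check in the case $m=1$, where $\hbar\Delta\langle a_0\otimes a_1\rangle_{free}=\tr([a_0,a_1]_\star) = \langle b(a_0\otimes a_1)\rangle_{free}$, confirms that the antisymmetry of $\hat{\Pi}$ together with the anticlockwise cyclic orientation selects the correct product order; the general case then follows by the same Stokes bookkeeping applied pair by pair around the circle.
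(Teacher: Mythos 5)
Your proposal is correct and follows essentially the same route as the paper: the paper also establishes that the $\hbar\Delta$-image of the integrand is $d$-exact on $S^1_{cyc}[m+1]$ (phrased there as the anticommutation of $d-\hbar\Delta$ with $\hbar\partial_P+D$, which is exactly your identity $d\,\pi_{\alpha\beta}^*P^{S^1}=d\theta_\beta-d\theta_\alpha$ in disguise), applies Stokes' theorem, and identifies the $m+1$ adjacent-collision boundary strata with the terms of $b$ via $P^{S^1}\to\pm\tfrac12$ producing the Moyal product. The only differences are presentational bookkeeping, which you have flagged appropriately.
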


\begin{proof}  Since $\Delta$ commutes with $\pa_P$, we have
$$
\hbar\Delta\langle \hat{\mathcal{O}}\rangle_{free}=\tr{\int_{S^1_{cyc}[m+1]} d\theta_0 \cdots d\theta_{m}e^{\hbar\partial_P}\hbar\Delta\bracket{ \hat{\mathcal{O}}_0\otimes \d \hat{\mathcal{O}}_1\otimes \d\hat{\mathcal{O}}_2\otimes\cdots\otimes \d \hat{\mathcal{O}}_m}}.
$$
The key observation is that the integrand is in fact an exact-form, leading to a boundary contribution. To see this, introduce the following $\Omega^\*_{S^1[m+1]}-$linear operator on $\Omega^\*_{S^1[m+1]}\otimes (\hOmega^{-\*}_{{2n}})^{\otimes (m+1)}$
$$
 D(a_0\otimes\cdots\otimes a_m)=\sum_{\alpha}\pm d\theta_{\alpha}\otimes \bracket{ a_0\otimes\cdots\otimes \d a_{\alpha}\otimes\cdots\otimes a_m}, \quad a_i \in \hOmega^{-\*}_{{2n}}.
$$
Then the expectation value map can be written as
$$
\abracket{\hat \OO}_{free}=\tr{\int_{S^1_{cyc}[m+1]} e^{\hbar\partial_P+D}\bracket{ d\theta_0\hat{\mathcal{O}}_0\otimes  \hat{\mathcal{O}}_1\otimes \hat{\mathcal{O}}_2\otimes\cdots\otimes  \hat{\mathcal{O}}_m}}.
$$
Let $d$ be the de Rham differential on ${S^1_{cyc}[m+1]}$. It is easy to check that $d-\hbar \Delta$ anticommutes with $\hbar\partial_P+D$ (this observation comes from \cite[Lemma 2.33]{BVQandindex}). Since
$$
  (d-\hbar \Delta)\bracket{ d\theta_0\hat{\mathcal{O}}_0\otimes  \hat{\mathcal{O}}_1\otimes \hat{\mathcal{O}}_2\otimes\cdots\otimes  \hat{\mathcal{O}}_m}=0,
$$
if follows that
\begin{align*}
\hbar\Delta\langle \hat{\mathcal{O}}\rangle_{free}&=\tr{\int_{S^1_{cyc}[m+1]} (\hbar \Delta)e^{\hbar\partial_P+D}\bracket{ d\theta_0\hat{\mathcal{O}}_0\otimes  \hat{\mathcal{O}}_1\otimes \hat{\mathcal{O}}_2\otimes\cdots\otimes  \hat{\mathcal{O}}_m}}\\
&=\tr{\int_{S^1_{cyc}[m+1]} d e^{\hbar\partial_P+D}\bracket{ d\theta_0\hat{\mathcal{O}}_0\otimes  \hat{\mathcal{O}}_1\otimes \hat{\mathcal{O}}_2\otimes\cdots\otimes  \hat{\mathcal{O}}_m}}\\
&=\tr{\int_{\pa S^1_{cyc}[m+1]} e^{\hbar\partial_P+D}\bracket{ d\theta_0\hat{\mathcal{O}}_0\otimes  \hat{\mathcal{O}}_1\otimes \hat{\mathcal{O}}_2\otimes\cdots\otimes  \hat{\mathcal{O}}_m}}.
\end{align*}

The boundary $\partial S^1_{cyc}[m+1]$ has $m+1$ components, each one corresponding to the collision of two adjacent points. Summing all boundary components we get Hochschild differential (Fig.\ref{Ho}). The fact that we get a Moyal product is due to the arbitrary distribution of the propagator $\pa_P$ and the property that $P^{S^1}$ becomes $\pm {1\over 2}$ when points collide (the sign is the direction of the collision).

\end{proof}

\begin{figure}

\definecolor{ccqqqq}{rgb}{0.8,0,0}
\definecolor{xdxdff}{rgb}{0.49019607843137253,0.49019607843137253,1}
\begin{tikzpicture}[line cap=round,line join=round,x=1cm,y=1cm]
\clip(-5.383780267054994,-2.411015216167219) rectangle (8.609640785576588,4.477142678569612);
\draw [line width=2pt] (-1.32,0) circle (0.7602631123499294cm);
\draw [line width=2pt] (0.72,0) circle (0.7717512552629899cm);
\draw [line width=2pt] (4,0) circle (0.7871467461661769cm);
\draw [line width=2pt] (-4,0) circle (0.82462cm);
\draw [->,line width=1pt] (-1.4362958317551557,0.588023172783776) -- (-1.088579890842029,0.4638389081719452);
\draw [->,line width=1pt] (-0.8774666410019162,0.2899809377153821) -- (-1.088579890842029,0.4638389081719452);
\draw [->,line width=1pt] (0.9480420487919997,0.5259310404778605) -- (1.1715737250932956,0.3644914964824805);
\draw [->,line width=1pt] (1.2460842838603943,0.11612296725881892) -- (1.1715737250932956,0.3644914964824805);
\draw [->,line width=1pt] (3.456564193950986,0.30239936417656516) -- (3.754606429019381,0.5010941875554944);
\draw [->,line width=1pt] (4.015393384704226,0.6004415992449591) -- (3.7345458939667004,0.4877204975203742);
\draw [->,line width=1pt] (-2.857464477581309,0) -- (-2.3048328986339404,0);
\draw (-0.5285171091602562,0.2) node[anchor=north west] {$\pm$};
\draw (1.5043776276818495,0.2) node[anchor=north west] {$\pm$};
\draw (2.688588153997639,0.2) node[anchor=north west] {$\pm$};
\draw (1.9977986803134284,0.2) node[anchor=north west] {$\cdot\cdot\cdot$};
\draw (-4.120622372318151,1.3) node[anchor=north west] {\tiny$\hat{\mathcal{O}}_0$};
\draw (-3.429832898633941,1.0) node[anchor=north west] {\tiny$\hat{\mathcal{O}}_m$};
\draw (-3.192990793370783,0.6) node[anchor=north west] {\tiny$\hat{\mathcal{O}}_{m-1}$};
\draw (-5.245622372318151,0.9) node[anchor=north west] {\tiny$\hat{\mathcal{O}}_1$};
\draw (-4.929832898633941,-0.654436268798804) node[anchor=north west] {\tiny$\hat{\mathcal{O}}_k$};
\draw (-1.61404342494973,1.3) node[anchor=north west] {\tiny$\hat{\mathcal{O}}_0\star\hat{\mathcal{O}}_m$};
\draw (-2.482464477581309,1.0) node[anchor=north west] {\tiny$\hat{\mathcal{O}}_1$};
\draw (-0.76,0.7) node[anchor=north west] {\tiny$\hat{\mathcal{O}}_{m-1}$};
\draw (-2.2258855302128877,-0.5557520582724883) node[anchor=north west] {\tiny$\hat{\mathcal{O}}_k$};
\draw (0.08332499610290182,-0.654436268798804) node[anchor=north west] {\tiny$\hat{\mathcal{O}}_k$};
\draw (3.1425355224186915,-0.6149625845882777) node[anchor=north west] {\tiny$\hat{\mathcal{O}}_k$};
\draw (4.4,1.1) node[anchor=north west] {\tiny$\hat{\mathcal{O}}_m$};
\draw (4.7,0.6) node[anchor=north west] {\tiny$\hat{\mathcal{O}}_{m-1}$};
\draw (0.43858815399763873,1.3) node[anchor=north west] {\tiny$\hat{\mathcal{O}}_0$};
\draw (-0.33114868810762454,1.0) node[anchor=north west] {\tiny$\hat{\mathcal{O}}_1$};
\draw (1.1096407855765862,1.0) node[anchor=north west] {\tiny$\hat{\mathcal{O}}_m\star\hat{\mathcal{O}}_{m-1}$};
\draw (2.6491144697871127,1.2) node[anchor=north west] {\tiny$\hat{\mathcal{O}}_1\star\hat{\mathcal{O}}_0$};
\draw (-5.5219381617918355,0.4126689943590897) node[anchor=north west] {$\cdot\cdot\cdot$};
\draw (-3.567990793370783,-0.4978573214303832) node[anchor=north west] {$\cdot\cdot\cdot$};
\draw (-2.6403592144234143,0.27451109962224776) node[anchor=north west] {$\cdot\cdot\cdot$};
\draw (-0.9627276354760457,-0.4386467951145938) node[anchor=north west] {$\cdot\cdot\cdot$};
\draw (1.2675355224186915,-0.24127837406196242) node[anchor=north west] {$\cdot\cdot\cdot$};
\draw (-0.39035921442341404,-0.1031204793251204685) node[anchor=north west] {$\cdot\cdot\cdot$};
\draw (2.6293776276818495,-0.1031204793251204685) node[anchor=north west] {$\cdot\cdot\cdot$};
\draw (4.563588153997639,-0.22154153195669928) node[anchor=north west] {$\cdot\cdot\cdot$};
\begin{scriptsize}
\draw [fill=xdxdff] (-4.056093858635844,0.8227099266590515) circle (2.5pt);
\draw [fill=xdxdff] (-4.680087905809231,0.4663459925549004) circle (2.5pt);
\draw [fill=xdxdff] (-3.4483583030094955,0.6129352188783382) circle (2.5pt);
\draw [fill=xdxdff] (-4.334909901876994,-0.7535472792232364) circle (2.5pt);
\draw [fill=xdxdff] (-1.9057023843149676,0.48471921460549094) circle (2.5pt);
\draw [fill=ccqqqq] (-0.98,0.68) circle (2.5pt);
\draw [fill=xdxdff] (-1.6124088893653576,-0.701781334476858) circle (2.5pt);
\draw [fill=xdxdff] (0.6823983080120576,0.7708346857528172) circle (2.5pt);
\draw [fill=xdxdff] (0.11925875967183541,0.4844687422001327) circle (2.5pt);
\draw [fill=xdxdff] (-3.18329231518371,0.11395921183483103) circle (2.5pt);
\draw [fill=xdxdff] (-0.5778409873888207,0.1649242250247065) circle (2.5pt);
\draw [fill=ccqqqq] (1.351143130657498,0.4441377586108319) circle (2.5pt);
\draw [fill=xdxdff] (0.5686471265297801,-0.7567643673510983) circle (2.5pt);
\draw [fill=ccqqqq] (3.6242715544557664,0.691684997096882) circle (2.5pt);
\draw [fill=xdxdff] (4.5656453376796176,0.547398713883501) circle (2.5pt);
\draw [fill=xdxdff] (4.783038972549198,0.08031168949222557) circle (2.5pt);
\draw [fill=xdxdff] (3.7147016048717862,-0.7336244446154077) circle (2.5pt);
\end{scriptsize}
\end{tikzpicture}

  \caption{Hochschild differential}\label{Ho}
\end{figure}
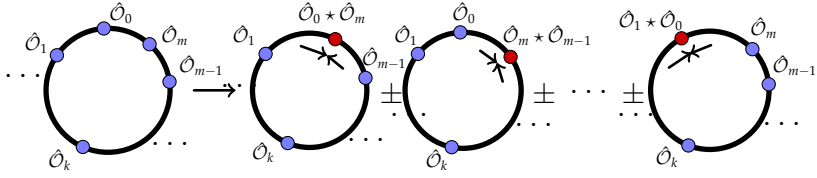

\subsubsection{Interacting expectation map} Recall (Section \ref{sec:Fedosov}) we are interested in the following Lie algebra
$$\mathfrak{g}:=\mathcal{W}^+_{2n}\cdot\text{Id}+\hbar \gl_r(\mathcal{W}_{2n}^+), \quad [-,-]={1\over  \hbar}[-,-]_\star.$$
Here $[-,-]_\star$ is the commutator with respect to the Moyal product.

The free expectation value map is an element
$$
\abracket{-}_{free}\in \Hom_{\mathbb{C}\LS{\hbar}}(C_{-\*}(\gl_r(\mathcal{W}_{2n})),\Omega^{-\bullet}_{{2n}})
$$
satisfying the following property
$$
(b-\hbar \Delta)\abracket{-}_{free}=0, \quad (B-\d)\abracket{-}_{free}=0.
$$
Now we are going to lift $\abracket{-}_{free}$ to Lie algebra cochains of $\g$ that will capture the interacting theory.

Let $\Id:\mathfrak{g}\rightarrow \mathfrak{g}$ be the identity map, which is viewed as a 1-chain in $C^1_{\Lie}(\mathfrak{g};\mathfrak{g})$ denoted by $\widehat{\Theta}$.

Let $(C^\*_{\Lie}(\mathfrak{g}),\pa)$ be the dg algebra with $\pa$ the \CE differential with respect to the trivial representation. We have an induced dg Lie algebra
$$
(C^\*_{\Lie}(\mathfrak{g})\otimes \g, \pa, [-,-])
$$
where $[-,-]$ is the Lie bracket on the $\g$-factor. Then $\widehat{\Theta}$ solves the Maurer-Cartan equation
$$
    \pa \widehat{\Theta}+{1\over 2}[\widehat{\Theta},\widehat{\Theta}]=0.
$$
As vector spaces, we can identify
$$
C^\*_{\Lie}(\mathfrak{g};\mathfrak{g})\iso C^\*_{\Lie}(\mathfrak{g})\otimes \g.
$$
Under this identification, the usual \CE differential $\pa_{\Lie}$ on $C^\*_{\Lie}(\mathfrak{g};\mathfrak{g})$ is
$$\partial_{\Lie}=\partial+[\widehat{\Theta},-].$$

As explained in the introduction, $\widehat{\Theta}$ can be viewed as a universal flat connection. Our goal is to turn this universal connection into an interaction added to our free theory.

Let us equip $\hOmega^{-\bullet}_{{2n}}$ with a trivial $\g$-module structure. Then
 $\Hom_{\mathbb{C}\LS{\hbar}}(C_{-\*}(\gl_r(\mathcal{W}_{2n})),\hOmega^{-\bullet}_{{2n}})$
has an induced $\g$-module structure from that on $\gl_r(\mathcal{W}_{2n})$ ($\g$ is a Lie subalgebra of $\gl_r(\mathcal{W}_{2n})$).

Let us recall the shuffle product on tensors, which we denote by $\times_{sh}$.  The shuffle product of a $p$-tensor with a $q$-tensor is a $(p+q)$-tensor defined by
\begin{align*}
(v_1\otimes \cdots \otimes v_p)\times_{sh} (v_{p+1}\otimes \cdots \otimes v_{p+q})
=\sum_{\sigma\in Sh(p,q)}v_{\sigma^{-1}(1)}\otimes \cdots \otimes v_{\sigma^{-1}(p+q)}.
\end{align*}
Here $Sh(p,q)$ is the subgroup of the permutation group $S_{p+q}$ given by $(p,q)$-shuffles
$$
Sh(p,q)=\{\sigma \in S_{p+q}| \sigma(1)<\cdots <\sigma (p)\ \text{and}\ \sigma(p+1)<\cdots<\sigma(p+q)\}.
$$

\begin{defn}\label{defn:interaction}
We define the interacting expectation map
$$\langle-\rangle_{int}\in C_{\Lie}^\*(\mathfrak{g};\Hom_{\mathbb{C}\LS{\hbar}}(C_{-\*}(\gl_r(\mathcal{W}_{2n})),\hOmega^{-\bullet}_{{2n}}))$$
by
\begin{align*}\langle \hat{\mathcal{O}}_0\otimes \hat{\mathcal{O}}_1\otimes \cdots \otimes \hat{\mathcal{O}}_m\rangle_{int}&:=\abracket{\sum_{k\geq 0} \hat{\mathcal{O}}_0\otimes \bracket{\bracket{\hat{\mathcal{O}}_1\otimes \cdots \otimes \hat{\mathcal{O}}_m}  \times_{sh} (\widehat\Theta/\hbar)^{\otimes k} }}_{free}\\
&=\sum_{k\geq 0}\sum_{ \substack{i_0,...,i_m\geq 0\\ i_0+\cdots+i_m=k}} \abracket{\widehat{\mathcal{O}}_0 \otimes (\widehat\Theta/\hbar)^{\otimes i_0}\otimes \hat{\mathcal{O}}_1\otimes (\widehat\Theta/\hbar)^{\otimes i_1}\otimes \cdots \otimes \hat{\mathcal{O}}_m\otimes(\widehat\Theta/\hbar)^{\otimes i_m}}_{free}.
\end{align*}
See Fig.\ref{int} for an illustration. In the above expression, we view $\hat \Theta$ as a Lie algebra 1-cochain valued in $\gl_r(\W_{2n})$, i.e., an element in $C^1(\g, \gl_r(\W_{2n}))$, using the natural embedding $\g\subset \gl_r(\W_{2n})$.   Therefore the following component of $\langle-\rangle_{int}$
$$
\hat{\mathcal{O}}_0\otimes \hat{\mathcal{O}}_1\otimes \cdots \otimes \hat{\mathcal{O}}_m\to \abracket{ \hat{\mathcal{O}}_0\otimes \bracket{\bracket{\hat{\mathcal{O}}_1\otimes \cdots \otimes \hat{\mathcal{O}}_m}  \times_{sh} (\widehat\Theta/\hbar)^{\otimes k} }}_{free}
$$
provides an element in $C_{\Lie}^k(\mathfrak{g};\Hom_{\mathbb{C}\LS{\hbar}}(C_{-m}(\gl_r(\mathcal{W}_{2n})),\hOmega^{-(m+k)}_{{2n}}))$. Koszul sign convention is always assumed to organize such a map into a Lie algebra cochain.

\iffalse
Explicitly, if we write
$$
\langle-\rangle_{int}: \wedge^\bullet \g\to \Hom_{\mathbb{C}\LS{\hbar}}(C_{-\*}(\gl_r(\mathcal{W}_{2n})),\hOmega^{-\bullet}_{{2n}}),
$$
then its value at $a_1\wedge \cdots \wedge a_k, a_i\in \g$, is the $\hbar$-linear map that sends $\hat{\mathcal{O}}_0 \otimes \hat{\mathcal{O}}_1\otimes...\otimes \hat{\mathcal{O}}_m$ to
$$
 \abracket{\hat{\mathcal{O}}_0\otimes \bracket{\bracket{\hat{\mathcal{O}}_1\otimes \cdots \otimes \hat{\mathcal{O}}_m}  \times_{sh} \bracket{{1\over \hbar^k}\hat \Theta(a_1)\wedge \cdots \wedge \hat \Theta(a_k)} }}_{free}\in \hat \Omega_{2n}^{-(k+m)}.
$$
Here $\hat \Theta(a_i)=a_i$, but viewed as an element in $\gl_r(\W_{2n})$ using the natural embedding $\g\subset \gl_r(\W_{2n})$.   As a tensor in $(\gl_r(\W_{2n}))^{\otimes k}$,
$$
\hat \Theta(a_1)\wedge \cdots \wedge \hat \Theta(a_k)={1\over k!}\sum_{\sigma \in S_k} \text{sgn}(\sigma)  \hat \Theta(a_{\sigma(1)}) \otimes \cdots \otimes  \hat \Theta(a_{\sigma(k)}).
$$
\fi

\end{defn}

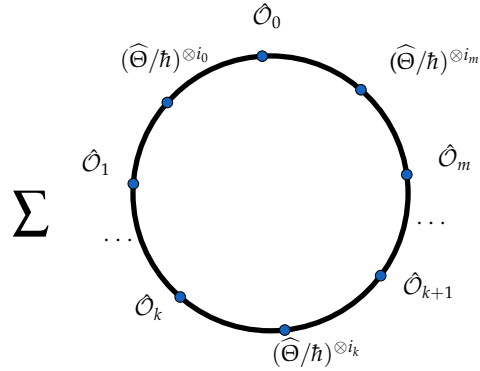
\begin{figure}

\definecolor{rvwvcq}{rgb}{0.08235294117647059,0.396078431372549,0.7529411764705882}
\begin{tikzpicture}[line cap=round,line join=round,>=triangle 45,x=1cm,y=1cm,scale=0.8,transform shape]
\clip(-6.87,-3.54) rectangle (7.47,4.3);
\draw [line width=2pt] (-0.01,0) circle (2.285607140345865cm);
\draw (-0.45,3.3) node[anchor=north west] {{$\hat{\mathcal{O}}_0$}};
\draw (-4.45,0.18) node[anchor=north west] {\Huge{$\sum$}};
\draw (1.83,2.6) node[anchor=north west] {{($\widehat{\Theta}/\hbar)^{\otimes i_m}$}};
\draw (2.63,0.94) node[anchor=north west] {{$\hat{\mathcal{O}}_{m}$}};
\draw (-2.65,2.62) node[anchor=north west] {{$(\widehat{\Theta}/\hbar)^{\otimes i_0}$}};
\draw (-3.29,0.86) node[anchor=north west] {{$\hat{\mathcal{O}}_1$}};
\draw (-2.93,-0.56) node[anchor=north west] {{$\cdot\cdot\cdot$}};
\draw (2.27,-0.28) node[anchor=north west] {{$\cdot\cdot\cdot$}};
\draw (2.05,-1.18) node[anchor=north west] {{$\hat{\mathcal{O}}_{k+1}$}};
\draw (-0.13,-2.26) node[anchor=north west] {{$(\widehat{\Theta}/\hbar)^{\otimes i_{k}}$}};
\draw (-2.41,-1.54) node[anchor=north west] {{$\hat{\mathcal{O}}_{k}$}};
\begin{scriptsize}
\draw [fill=rvwvcq] (-2.29,0.16) circle (2.5pt);
\draw [fill=rvwvcq] (-0.1476741265598887,2.281456954421006) circle (2.5pt);
\draw [fill=rvwvcq] (-1.7250258764177986,1.5108561292252032) circle (2.5pt);
\draw [fill=rvwvcq] (1.4922446916995062,1.7225739131487674) circle (2.5pt);
\draw [fill=rvwvcq] (2.25417072637637,0.31229941053467203) circle (2.5pt);
\draw [fill=rvwvcq] (1.8207956964465248,-1.3682788889231925) circle (2.5pt);
\draw [fill=rvwvcq] (0.2272102048344094,-2.273264462996424) circle (2.5pt);
\draw [fill=rvwvcq] (-1.5107707124277856,-1.7238582507616451) circle (2.5pt);
\end{scriptsize}
\end{tikzpicture}

  \caption{$\langle-\rangle_{int}$}\label{int}
\end{figure}

\begin{thm}\label{thm-interaction}
  $\langle-\rangle_{int}$ is closed under the differential $\partial_{\Lie}+b-\hbar\Delta$ and $B-\d$. In other words
  $$
  \partial_{\Lie}\langle-\rangle_{int}+\langle b(-)\rangle_{int}=\hbar \Delta \langle-\rangle_{int}, \quad \d\langle-\rangle_{int}=\langle B(-)\rangle_{int}.
  $$
\end{thm}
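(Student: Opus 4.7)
The plan is to reduce both identities to the free expectation identities $(b-\hbar\Delta)\langle-\rangle_{free}=0$ and $(B-\d)\langle-\rangle_{free}=0$ proved above, combined with the Maurer-Cartan equation $\partial\widehat\Theta+\tfrac{1}{2}[\widehat\Theta,\widehat\Theta]=0$ satisfied by the universal $1$-cochain $\widehat\Theta\in C^1_{\Lie}(\g;\g)$. Abbreviate the enriched tensor by
\[
E_k(\hat{\OO}) := \hat{\OO}_0\otimes\bigl((\hat{\OO}_1\otimes\cdots\otimes\hat{\OO}_m)\times_{sh}(\widehat\Theta/\hbar)^{\otimes k}\bigr),
\]
so that $\langle\hat\OO\rangle_{int}=\sum_{k\geq 0}\langle E_k(\hat\OO)\rangle_{free}$ and the $k$-th summand lives in Lie cochain degree $k$.

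For the $B$-identity, applying $\d$ and using the free identity yields $\d\langle\hat\OO\rangle_{int}=\sum_k\langle B(E_k(\hat\OO))\rangle_{free}$. The operator $B$ cyclically inserts a unit $1\in\gl_r(\W_{2n})$, which is annihilated by every Lie derivative $\mathcal L_{\partial_{y^i}}$; hence the propagator $\partial_P$ is trivial on the $1$-slot and the free correlator is independent of that slot's $\theta$-coordinate. Reorganizing the double sum over the cyclic insertion position of $1$ and the shuffle positions of the $\widehat\Theta$'s---cyclically rotating each term so that $1$ becomes the new fixed ``zeroth'' slot---reconstructs $\langle B(\hat\OO)\rangle_{int}$ exactly.

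For the $b$-identity, the free $\Delta$-identity gives $\hbar\Delta\langle\hat\OO\rangle_{int}=\sum_k\langle b(E_k(\hat\OO))\rangle_{free}$. I partition the Hochschild boundary on $E_k$ into three classes of adjacent-pair collisions: (i) $\hat\OO_i\star\hat\OO_{i+1}$, (ii) $\hat\OO_i\star(\widehat\Theta/\hbar)$ or $(\widehat\Theta/\hbar)\star\hat\OO_i$, and (iii) $(\widehat\Theta/\hbar)\star(\widehat\Theta/\hbar)$. Type (i) only occurs in shuffles with $i_i=0$, and summing over the residual positions of the other $\widehat\Theta$'s reproduces $\langle b(\hat\OO)\rangle_{int}$ (the cyclic wrap-around term $\hat\OO_m\star\hat\OO_0$ comes from the boundary stratum of $S^1_{cyc}$ where the last point meets $\theta_0$). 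Type (ii) collisions on the two sides of a fixed $\hat\OO_i$ carry opposite Hochschild signs; they combine to insert $\tfrac{1}{\hbar}[\widehat\Theta,\hat\OO_i]_\star=[\widehat\Theta,\hat\OO_i]$ in the $i$-th slot, which after summing over $i$ is exactly the $[\widehat\Theta,-]$-contribution to $\partial_{\Lie}\langle\hat\OO\rangle_{int}$ coming from the dual $\g$-action on $\Hom_{\mathbb{C}\LS{\hbar}}(C_{-\*}(\gl_r(\W_{2n})),\hOmega^{-\*}_{2n})$.

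The core of the argument is type (iii): an adjacent collision of two $\widehat\Theta/\hbar$ replaces them by $(\widehat\Theta/\hbar)\star(\widehat\Theta/\hbar)=\tfrac{1}{2\hbar}[\widehat\Theta,\widehat\Theta]=-\tfrac{1}{\hbar}\partial\widehat\Theta$, where the second equality is Maurer-Cartan. The combinatorial claim is that summing these collisions over all $k$ and all shuffles yields precisely $\partial\langle\hat\OO\rangle_{int}$, where $\partial$ acts by Leibniz on each inserted $\widehat\Theta/\hbar$. Verifying this is the main obstacle: one must track Koszul signs through the Hochschild sum and check that the symmetry factor $1/2$ in $[\widehat\Theta,\widehat\Theta]$ is absorbed by the two-to-one identification of adjacent-pair collisions arising from the two shuffle-equivalent configurations in which the ``differentiated'' $\widehat\Theta$ slot can sit (left or right of its neighbor). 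Once this is done, assembling the contributions from types (i)--(iii) gives the first identity $\hbar\Delta\langle\hat\OO\rangle_{int}=\partial_{\Lie}\langle\hat\OO\rangle_{int}+\langle b(\hat\OO)\rangle_{int}$.
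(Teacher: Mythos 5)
Your proposal follows essentially the same route as the paper's proof: both reduce to the free identities $(b-\hbar\Delta)\langle-\rangle_{free}=0$, $(B-\d)\langle-\rangle_{free}=0$ and then split the adjacent-collision terms into exactly the same three classes --- collisions among the $\hat{\mathcal O}_i$ giving $\langle b(-)\rangle_{int}$, collisions of $\widehat\Theta$ with an $\hat{\mathcal O}_i$ giving $\frac{1}{\hbar}[\widehat\Theta,\hat{\mathcal O}_i]_\star$, and collisions of two $\widehat\Theta$'s converted to $\partial\widehat\Theta$ via the Maurer--Cartan equation, reassembled through $\partial_{\Lie}=\partial+\frac{1}{\hbar}[\widehat\Theta,-]_\star$. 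The sign and symmetry-factor bookkeeping you flag as the remaining obstacle is treated at the same (pictorial) level of detail in the paper itself, so your argument is a faithful match.
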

\begin{proof}The equation
  $\d \langle-\rangle_{int}=\langle B(-)\rangle_{int}$
  follows from the equation for $\langle-\rangle_{free}$. We next compute $\hbar\Delta\langle -\rangle_{int}$.  There will be three terms.   The first is about collision inside $\hat \OO$'s which gives rise to
  $$\langle b(-)\rangle_{int}.$$
The second term (Fig.\ref{ACTION}) gives $[\hat \Theta/\hbar,\hat \OO_k]_\star$. The third term (Fig.\ref{LIEACTION}) gives $-\frac{1}{2\hbar}[\widehat{\Theta},\widehat{\Theta}]_\star$ which equals $\pa \widehat{\Theta}$ by its Maurer-Cartan equation. Since  $\partial_{\Lie}=\partial+\frac{1}{\hbar}[\widehat{\Theta},-]_\star$, the sum of these two terms gives
  $$\partial_{\Lie}\langle-\rangle_{int}.$$
\end{proof}

\begin{figure}[htp]
\definecolor{ccqqqq}{rgb}{0.8,0,0}
\definecolor{xdxdff}{rgb}{0.49019607843137253,0.49019607843137253,1}
\begin{tikzpicture}[line cap=round,line join=round,>=triangle 45,x=1cm,y=1cm]
\clip(-7.32,-3.18) rectangle (7.2,1.84);
\draw [line width=2pt] (-5.36,0) circle (1.1260550608207396cm);
\draw [line width=2pt] (-1.42,0) circle (1.1280070921762846cm);
\draw [line width=2pt] (2.72,0) circle (1.1180339887498925cm);
\draw (-4.12,0.06) node[anchor=north west] {$\pm$};
\draw (-3.18,0.06) node[anchor=north west] {$\pm$};
\draw (-3.74,0.06) node[anchor=north west] {$\cdot\cdot\cdot$};
\draw (-6.46,-0.82) node[anchor=north west] {$\hat{\mathcal{O}}_k$};
\draw (-7.14,0.52) node[anchor=north west] {$\cdot\cdot\cdot$};
\draw (-1.62,-1.2) node[anchor=north west] {$\hat{\mathcal{O}}_k$};
\draw (1.96,-1.2) node[anchor=north west] {$\frac{1}{\hbar}[\widehat{\Theta},\hat{\mathcal{O}}_{k}]_{\star}$};
\draw (-0.42,-0.18) node[anchor=north west] {$\cdot\cdot\cdot$};
\draw (3.8,-0.22) node[anchor=north west] {$\cdot\cdot\cdot$};
\draw [->,line width=1pt] (-4.92,-1.26) -- (-5.58,-1.36);
\draw [->,line width=1pt] (-2.16,-1.16) -- (-1.58,-1.38);
\draw [->,line width=1pt] (0.24,-0.08) -- (1.18,-0.06);
\draw (-4.72,-0.8) node[anchor=north west] {$\widehat{\Theta}/\hbar$};
\draw (-3.14,-0.6) node[anchor=north west] {$\widehat{\Theta}/\hbar$};
\begin{scriptsize}
\draw [fill=xdxdff] (-5.36,1.1260550608207396) circle (2.5pt);
\draw [fill=xdxdff] (-6.341442044466652,0.5520611500124926) circle (2.5pt);
\draw [fill=xdxdff] (-5.7531159823220825,-1.0552060578119058) circle (2.5pt);
\draw [fill=xdxdff] (-4.712934505795452,-0.9215781281095079) circle (2.5pt);
\draw [fill=xdxdff] (-4.573180822944892,0.8055529669849912) circle (2.5pt);
\draw [fill=xdxdff] (-1.4582155781857178,1.1273595564787342) circle (2.5pt);
\draw [fill=xdxdff] (-2.349525134747477,0.639048530138892) circle (2.5pt);
\draw [fill=xdxdff] (-2.1867793633112433,-0.8273145762042381) circle (2.5pt);
\draw [fill=xdxdff] (-1.3562525832360732,-1.1262043628293772) circle (2.5pt);
\draw [fill=xdxdff] (-0.5550919068831923,0.7241091012140698) circle (2.5pt);
\draw [fill=xdxdff] (2.6807948634769896,1.1173463909057955) circle (2.5pt);
\draw [fill=xdxdff] (1.8230759917113275,0.6674783317497098) circle (2.5pt);
\draw [fill=ccqqqq] (2.743782574707725,-1.1177810112630586) circle (2.5pt);
\draw [fill=xdxdff] (3.6377999171377637,0.6384695075740964) circle (2.5pt);
\end{scriptsize}
\end{tikzpicture}

  \caption{The term $[\widehat\Theta,\hat{\mathcal{O}}_i]/\hbar$}\label{ACTION}
\end{figure}
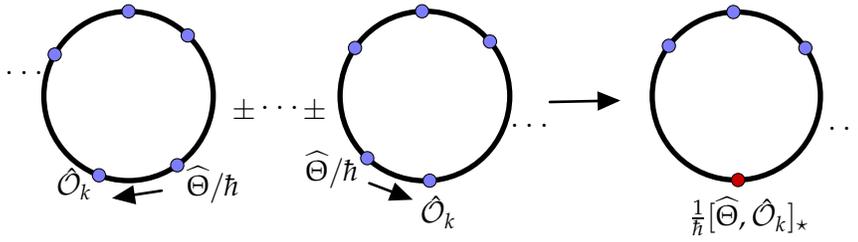

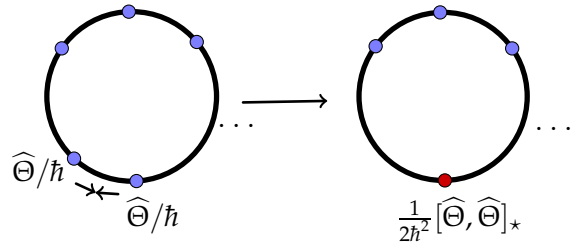
\begin{figure}[htp]
\definecolor{ccqqqq}{rgb}{0.8,0,0}
\definecolor{xdxdff}{rgb}{0.49019607843137253,0.49019607843137253,1}
\begin{tikzpicture}[line cap=round,line join=round,x=1cm,y=1cm]
\clip(-7.32,-3.18) rectangle (7.2,1.84);
\draw [line width=2pt] (-2.42,0) circle (1.1280070921762846cm);
\draw [line width=2pt] (1.72,0) circle (1.1180339887498925cm);
\draw (-2.62,-1.2) node[anchor=north west] {$\widehat{\Theta}/\hbar$};
\draw (0.96,-1.2) node[anchor=north west] {$\frac{1}{2\hbar^2}[\widehat{\Theta},\widehat{\Theta}]_{\star}$};
\draw (-1.42,-0.18) node[anchor=north west] {$\cdot\cdot\cdot$};
\draw (2.8,-0.22) node[anchor=north west] {$\cdot\cdot\cdot$};
\draw [->,line width=1pt] (-3.16,-1.16) -- (-2.9,-1.28);
\draw [->,line width=1pt] (-2.6,-1.3) -- (-2.9,-1.28);
\draw [->,line width=1pt] (-0.94,-0.08) -- (0.18,-0.06);
\draw (-4.14,-0.6) node[anchor=north west] {$\widehat{\Theta}/\hbar$};
\begin{scriptsize}
\draw [fill=xdxdff] (-2.4582155781857178,1.1273595564787342) circle (2.5pt);
\draw [fill=xdxdff] (-3.349525134747477,0.639048530138892) circle (2.5pt);
\draw [fill=xdxdff] (-3.1867793633112433,-0.8273145762042381) circle (2.5pt);
\draw [fill=xdxdff] (-2.3562525832360732,-1.1262043628293772) circle (2.5pt);
\draw [fill=xdxdff] (-1.5550919068831923,0.7241091012140698) circle (2.5pt);
\draw [fill=xdxdff] (1.6807948634769896,1.1173463909057955) circle (2.5pt);
\draw [fill=xdxdff] (0.8230759917113275,0.6674783317497098) circle (2.5pt);
\draw [fill=ccqqqq] (1.743782574707725,-1.1177810112630586) circle (2.5pt);
\draw [fill=xdxdff] (2.6377999171377637,0.6384695075740964) circle (2.5pt);
\end{scriptsize}
\end{tikzpicture}

  \caption{The term $[\widehat\Theta,\widehat{\Theta}]/2\hbar^2$}\label{LIEACTION}
\end{figure}

For applications, it would be useful to $\C[u,u^{-1}]$-linearly extend $\abracket{-}_{int}$ naturally to an element
$$
\abracket{-}_{int}^{S^1}\in C_{\Lie}^\*(\mathfrak{g};\Hom_{\mathbb{K}}(CC_{-\*}^{per}(\gl_r(\mathcal{W}_{2n})),\hOmega^{-\bullet}_{2n}[u,u^{-1}]).
$$
We will still call it $\abracket{-}_{int}$. An an element of this cyclic extended object, we have the following.

\begin{thm}\label{thm-interaction-S1}
$\abracket{-}_{int}$ is closed under the differential
$
(\partial_{\Lie}+b-\hbar\Delta)+u(B-\d).
$
\end{thm}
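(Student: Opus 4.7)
The plan is to observe that Theorem \ref{thm-interaction-S1} is a direct $\mathbb{C}[u,u^{-1}]$-linear repackaging of the two identities already established in Theorem \ref{thm-interaction}, organized so that the total differential $(\partial_{\Lie}+b-\hbar\Delta)+u(B-\d)$ becomes the natural operator on periodic cyclic chains coupled to the target BV complex.

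First, I would unpack the meaning of $\abracket{-}_{int}^{S^1}$: by construction it is the $\mathbb{C}[u,u^{-1}]$-linear extension of $\abracket{-}_{int}$ along the cyclic parameter $u$ (of cohomological degree $2$). Since both the source $CC^{per}_{-\*}(\gl_r(\mathcal{W}_{2n}))=C_{-\*}(\gl_r(\mathcal{W}_{2n}))[u,u^{-1}]$ and the target $\hOmega^{-\*}_{2n}[u,u^{-1}]$ are obtained by tensoring with $\mathbb{C}[u,u^{-1}]$, and since the operators $\partial_{\Lie}$, $b$, $\hbar\Delta$, $B$, $\d$ are all extended $u$-linearly, no new computation is needed beyond verifying that the two halves of the differential work out separately on each $u$-homogeneous piece.

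Next I would simply apply Theorem \ref{thm-interaction} twice. The degree-$0$ part in $u$ gives
\begin{equation*}
\partial_{\Lie}\abracket{-}_{int}+\abracket{b(-)}_{int}-\hbar\Delta\abracket{-}_{int}=0,
\end{equation*}
while the degree-$1$ part in $u$ follows from
\begin{equation*}
u\cdot\bigl(\abracket{B(-)}_{int}-\d\abracket{-}_{int}\bigr)=0.
\end{equation*}
Adding these two identities and collecting terms yields precisely
\begin{equation*}
\bigl[(\partial_{\Lie}+b-\hbar\Delta)+u(B-\d)\bigr]\,\abracket{-}_{int}^{S^1}=0,
\end{equation*}
with the usual Koszul sign conventions accounting for the degree $2$ of $u$ versus the odd operators $B$ and $\d$. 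The fact that the two identities of Theorem \ref{thm-interaction} can be combined without any cross-terms is the key point, and this is guaranteed because $\partial_{\Lie}+b-\hbar\Delta$ preserves the $u$-grading while $B-\d$ shifts it by one, so they live in different $u$-graded pieces of the total differential.

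The only subtle point — and the one I would double-check carefully — is that the $\mathbb{C}\LS{\hbar}$-linearity of $\abracket{-}_{int}$ is genuinely compatible with extending along $u$, i.e. that no nontrivial $u$-dependence sneaks into $\widehat\Theta$ or the propagator $P^{S^1}$. Since $\widehat\Theta\in C^1_{\Lie}(\g;\g)$ is purely in $u$-degree $0$ and the Feynman integrals defining $\abracket{-}_{free}$ do not involve $u$, this compatibility is automatic, so the argument really does reduce to Theorem \ref{thm-interaction} plus a bookkeeping of $u$-degrees. There is no serious obstacle here; the theorem is essentially a formal corollary.
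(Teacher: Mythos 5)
Your proposal is correct and matches the paper's approach: the paper's proof of Theorem \ref{thm-interaction-S1} is simply the one-line observation that it follows from Theorem \ref{thm-interaction}, and your argument is an accurate expansion of that reduction via the $\C[u,u^{-1}]$-linear extension and $u$-degree bookkeeping.
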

\begin{proof} This follows from Theorem \ref{thm-interaction}.

\end{proof}

\subsection{The Universal trace}\label{trace-map} Now we use expectation value map to construct the universal trace.

\begin{defn}\label{BV-integration} We define the Berezin integration map by
\begin{align*}
  \int_{BV}: \hOmega^{-\*}_{2n}[u,u^{-1}] & \to \mathbb{K}=\C\LS{\hbar}[u,u^{-1}]\\
          a     &\to \sigma(u^ne^{\hbar \iota_{\Pi}/u}a).
\end{align*}
Here $\sigma$ is the symbol map which sets $y^i,dy^i$'s to zero. $ \int_{BV}$ has cohomology degree $2n$.
\end{defn}

\begin{rem} The transformation $e^{\hbar \iota_{\Pi}/u}(-)$ at the cohomology level is called the character map in \cite{CATTANEO20111966}.
\end{rem}

$\int_{BV}$ can be viewed as an equivariant version of Berezin integral for BV algebras, where the super Lagrangian is  the pure fermionic one. $ \int_{BV}$ is  a $\deg=2n$ map of cochain complexes
$$
  \int_{BV}: (\hOmega^{-\*}_{2n}[u,u^{-1}], \hbar \Delta+ u\d ) \to (\mathbb{K},0).
$$

\begin{defn}We define the universal trace map to be the element
$$
\uTr := \int_{BV}\abracket{-}_{int}\in C^\*(\mathfrak{g};\Hom_{\mathbb{K}}(CC_{-\*}^{per}(\gl_r(\mathcal{W}_{2n})),\mathbb{K})).
$$
Given $a_1, \dots,a_k\in \g$, we will write
$$
\uTr[a_1,\dots, a_k](-)\in \Hom_{\mathbb{K}}(CC_{-\*}^{per}(\gl_r(\mathcal{W}_{2n})),\mathbb{K})
$$
for the corresponding evaluation map on cyclic tensors.
\end{defn}

\begin{thm}\label{TRACE}
  The universal trace lies in the $(\g, \mathfrak{h})$-Lie algebra cochain complex
  $$
  \uTr \in C^\*(\mathfrak{g},\mathfrak{h};\Hom_{\mathbb{K}}(CC^{per}_{-\bullet}(\gl_r(\mathcal{W}_{2n})),\mathbb{K}))
  $$
  and is closed under the differential
  $$
    (\pa_{\Lie}+b+uB) \uTr=0.
  $$
\end{thm}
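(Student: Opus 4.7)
The plan is to establish the two assertions of the theorem---the cocycle equation $(\partial_{\Lie} + b + uB)\uTr = 0$ and the relative (i.e.\ $\mathfrak{h}$-basic) property---by reducing them to Theorem~\ref{thm-interaction-S1} together with algebraic/diagrammatic properties of the Berezin integration and of the $\mathfrak{sp}_{2n}$-vertex insertions.

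For the cocycle equation, I would apply $\int_{BV}$ to the identity
$$(\partial_{\Lie} + b + uB)\langle - \rangle_{int} = (\hbar\Delta + u\d)\langle - \rangle_{int}$$
coming from Theorem~\ref{thm-interaction-S1}. Since $\int_{BV}$ acts only on the $\hat\Omega^{-\bullet}_{2n}$-factor, it commutes with $\partial_{\Lie}$, $b$, and $B$, so the left-hand side equals $(\partial_{\Lie} + b + uB)\uTr$. What remains is to check that $\int_{BV}$ is a chain map to the zero-differential complex $(\mathbb{K}, 0)$, i.e.\ that it annihilates $\hbar\Delta + u\d$. This reduces to the identity $e^{\hbar\iota_\Pi/u}(\hbar\Delta + u\d) = u\,\d\, e^{\hbar\iota_\Pi/u}$, which follows from $\iota_\Pi^2 = 0$, $[\iota_\Pi, \d] = -\Delta$, and a two-term truncation of the BCH series; applying $\sigma$ then kills the leftover $\d$ because $\sigma$ sets $dy = 0$ while $\d$ raises the $dy$-degree.

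For the relative property, I would first verify $\iota_a \uTr = 0$ for every $a \in \mathfrak{h}$ and then deduce $\mathfrak{h}$-equivariance from Cartan's magic formula $L_a = \iota_a \partial_{\Lie} + \partial_{\Lie} \iota_a$ combined with the cocycle equation, noting that $\iota_a$ commutes with $b + uB$ since those differentials act on unrelated tensor factors. The key observation for $\iota_a \uTr = 0$ is that every $\widehat\Theta$-insertion in $\langle - \rangle_{int}$ sits in a position $\geq 1$ and therefore receives the de Rham differential $\d$ in the free expectation. Decomposing $\mathfrak{h} = \mathfrak{sp}_{2n} \oplus \hbar\gl_r \oplus Z(\mathfrak{g})$: for $a \in \hbar\gl_r \oplus Z(\mathfrak{g})$, the quantity $a/\hbar$ is constant in $y$, so $\d(a/\hbar) = 0$ and the insertion contributes nothing.

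The hard part will be the $\mathfrak{sp}_{2n}$-case. For $a = \tfrac{1}{2}a_{ij}y^iy^j$ we have $\d(a/\hbar) = \tfrac{1}{\hbar}a_{ij}y^i\, dy^j$, carrying exactly one $y$-factor, so at most one propagator leg can attach to this vertex (a second $\partial_{y^k}$ annihilates the form). In a diagram where no propagator attaches, the residual $y^i$ survives the free correlator and is killed by the symbol map $\sigma$ at the end of $\int_{BV}$. In a diagram where exactly one propagator attaches, the integrand factorizes as $P^{S^1}(\theta_a, \theta_b)\cdot f(\text{other positions})$, and summing over the shuffle positions of the $a$-insertion among the other vertices is equivalent to integrating $\theta_a$ over the entire circle; the contribution then vanishes because $\int_{S^1} P^{S^1}\, d\theta_a = \int_0^1(u - \tfrac{1}{2})\,du = 0$. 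Tracking signs and verifying that the shuffle-sum really reconstructs the full circle integration is the principal technical subtlety.
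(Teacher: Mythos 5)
Your proposal follows essentially the same route as the paper: the cocycle equation is obtained by composing Theorem \ref{thm-interaction-S1} with the chain-map property of $\int_{BV}$, and the vanishing on arguments in $\mathfrak{h}$ comes down to the observation that $\d f$ is linear in $y$ for $f\in\mathfrak{h}$, so the single propagator attachment contributes $\int_0^1(u-\tfrac12)\,du=0$ after the shuffle sum reconstructs the full circle integration; your Cartan-formula derivation of $\mathfrak{h}$-equivariance from $\iota_a\uTr=0$ is a harmless variant of the paper's direct remark that $\tr$, $\iota_\Pi$, $\partial_P$ and $\sigma$ are all $\mathfrak{h}$-equivariant. One small correction: $\iota_\Pi^2\neq 0$ in general (it is an even second-order contraction operator), but this is not what you need; the conjugation identity $e^{\hbar\iota_\Pi/u}(\hbar\Delta+u\d)e^{-\hbar\iota_\Pi/u}=u\d$ terminates after one commutator because $[\iota_\Pi,\d]=-\Delta$ and $[\iota_\Pi,\Delta]=0$ (the coefficients of $\Pi$ are constant), after which $\sigma\circ\d=0$ finishes the argument exactly as you say.
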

\begin{proof} The equation $(\pa_{\Lie}+b+uB) \uTr=0$ follows from Theorem \ref{thm-interaction-S1} and the fact that $\int_{BV}$ is a cochain map.  We only need to show that $\uTr$ is $\mathfrak{h}$-invariant and vanishes when there is some argument taking value in $\mathfrak{h}$.

   Note that for $f\in\mathfrak{h}$, $\d f$ is linear in $y^i$ so $\partial_P$ can be only applied once. It will contribute to the expectation value map $\abracket{\ldots, f ,\ldots}_{free}$ a factor of
   $$\int^{1}_{0}(u-\frac{1}{2})du=0.$$
 Thus
   $$\uTr[...,f,...](-)=\int_{BV} \langle \ldots, f, \ldots\rangle_{free}=0.$$

  Finally, $\uTr$ is $\mathfrak{h}-$invariant since the operations $\Tr,\iota_{{\Pi}},\partial_P,\sigma$ are all $\mathfrak{h}-$invariant.
\end{proof}

\subsection{Gauss-Manin connection and Index}\label{sec: GM}
In this section, we use $\uTr$ to implement the idea of exact semi-classical approximation to prove the algebraic index theorem (Theorem \ref{thm-index}).

As we mentioned in the introduction, we need to explore the $\hbar$-variation in terms of certain Gauss-Manin connection. In our case, the relevant one is Getzler's Gauss-Manin connection  \cite{getzler1993cartan} on periodic cyclic homologies. We briefly recall this construction.

Assume $\hbar$ is the deformation parameter and we are given a family of unital (homotopic) associative multiplication $m_{\hbar}$ on $A_{\hbar}$ parametrized by $\hbar$. Getzler's Gauss-Manin connection is defined on the periodic cyclic chains $CC_{-\*}^{per}(A_\hbar)$ by

$$
\nabla^{GM}_{\partial_{\hbar}}=\partial_{\hbar}+\frac{1}{u}\iota_{\frac{\partial m_\hbar}{\partial \hbar}}.
$$
Here $m_{\hbar}$ is viewed as an element in the Hochschild cochain $C^\*(A,A)$, $\iota$ is the noncommutative analogue of contraction, and $\mathcal L$ is the noncommutative analogue of Lie derivative.  The following homotopic Cartan formula holds
$$
[b+uB,\iota_\phi]+\iota_{[m_{\hbar},\phi]}=u\mathcal{L}_\phi, \quad \forall\phi\in C^{\bullet}(A,A).
$$
Note that the Hochschild differential $b$ depends on $\hbar$ through $m_{\hbar}$, but the Connes operator $B$ is independent of $\hbar$. We refer to  \cite{getzler1993cartan} for detailed explanation.

For our application, we only need a simplified situation when the deformation is inner
$$
\frac{\partial m_\hbar}{\partial \hbar}=[m_\hbar,\varphi_\hbar], \quad \text{for some} \ \varphi_\hbar\in C^{\bullet}(A,A).
$$
Then the homotopy Cartan formula implies that
$$
\nabla^{GM}_{\partial_{\hbar}}=\partial_{\hbar}+\mathcal{L}_{\varphi_\hbar}-\frac{1}{u}[b+uB,\iota_{\varphi_\hbar}].
$$

In our case, we apply the above construction to the Weyl algebra $\W_{2n}$. It would be convenient to work with the logarithmic derivative $\hbar\pa_\hbar$. Observe that the following transformations
$$
 y^i\to \lambda y^i, \quad \hbar \to \lambda^2 \hbar, \quad \lambda\in \C^*
$$
are automorphism of $\W_{2n}$. This realizes the $\hbar\pa_{\hbar}$variation by an explicit inner deformation.

Introduce the vector field
$$
\E=\frac{1}{2}\sum_i y^i\frac{\partial}{\partial y^i}
$$
which gives a $\hbar$-linear map on $\W_{2n}$ and can be viewed as an element of $C^1(\W_{2n},\W_{2n})$. Let $m_{\hbar}$ denote the Moyal product on $\W_{2n}$. Then the above automorphism says that $m_\hbar$ is invariant under simultaneous rescaling of $y$ and $\hbar$. The infinitesimal version is translated into the equation
$$
    \hbar\pa_{\hbar} m_\hbar+[\E,m_\hbar]=0 \quad \text{or equivalently}\quad \hbar\pa_{\hbar} m_\hbar=[m_\hbar,\E].
$$
It follows that the Gauss-Manin connection has the form
$$
       \nabla_{\hbar\pa_\hbar}^{GM}=\hbar\pa_\hbar+\mathcal L_{\E} + \text{ $(b+uB)$-homotopy}.
$$
This suggests that we should look at rescaling property under
$$
\hbar\partial_{\hbar}+{1\over 2}\sum y^i\frac{\partial}{\partial{y^i}}.
$$
This is indeed how index theorem was derived in \cite{BVQandindex}. In the rest of this section, we will basically show that the argument in \cite{BVQandindex} can be lifted to a universal calculation in Lie algebra cohomology.

Let us first set up some notations motivated from the above discussion.

\begin{defn} We define the $\hbar$-connection $\nabla$ on the $\C\LS{\hbar}$-module $\W_{2n}$ and $\hat \Omega_{2n}^{-\*}$ by
$$
  \nabla_{\hbar\pa_\hbar}={\hbar\pa_\hbar}+\E.
$$
Here the action of $\E$ on $\hat \Omega_{2n}^{-\*}$ is understood by Lie derivative with respect to the vector field $\frac{1}{2}\sum_i y^i\frac{\partial}{\partial y^i}$.
\end{defn}

Note that we have induced $\hbar$-connections on tensors of $\gl_r(\W_{2n})$, $\Hom_{\mathbb{C}\LS{\hbar}}(C_{-\*}(\gl_r(\mathcal{W}_{2n})),\hOmega^{-\bullet}_{{2n}})$, $\Hom_{\mathbb{K}}(CC_{-\*}^{per}(\gl_r(\mathcal{W}_{2n})),\mathbb{K})$, etc.  All of them will still be denoted by $\nabla_{\hbar\pa_\hbar}$.

\begin{defn} Let $V$ be a $\C\LS{\hbar}$-module where the connection $\nabla_{\hbar\pa_\hbar}$ is defined. Assume $V$ carries a structure of $\g$-module. We define the linear map
$$
\nabla_{\hbar\pa_\hbar}:  C^\*(\mathfrak{g};V)\to C^\*(\mathfrak{g};V)
$$
by extending that on $V$. In other words, given $\varphi \in C^k(\mathfrak{g};V)$,  the cochain $\nabla_{\hbar\pa_\hbar} \varphi\in C^k(\mathfrak{g};V)$ is
$$
(\nabla_{\hbar\pa_\hbar} \varphi) (a_1,\dots, a_k):= \nabla_{\hbar\pa_\hbar} \bracket{\varphi (a_1,\dots, a_k)}, \quad a_i \in \g.
$$
Similarly we define $\nabla_{\hbar\pa_\hbar}$ on $C^\*(\mathfrak{g}, \mathfrak{h};V)$.
\end{defn}

\begin{rem} Although $\g$ is  a Lie algebra over $\C\PS{\hbar}$, we do not allow $\nabla_{\hbar\pa_\hbar}$ to acts on the $\g$-factor. The reason is that cochains in $C^\bullet(\g; V)$ is only $\C$-linear, but not $\C\PS{\hbar}$-linear. For examples, the projections and curvatures defined in Section \ref{sec:Char} are only $\C$-linear maps.

\end{rem}

In particular,  we have now a well-defined operator
$$
\nabla_{\hbar\pa_\hbar}: C^\*(\mathfrak{g},\mathfrak{h};\Hom_{\mathbb{K}}(CC_{-\*}^{per}(\gl_r(\mathcal{W}_{2n})),\mathbb{K})) \to C^\*(\mathfrak{g},\mathfrak{h};\Hom_{\mathbb{K}}(CC_{-\*}^{per}(\gl_r(\mathcal{W}_{2n})),\mathbb{K})).
$$
Note that this operator does not commute with $\pa_{\Lie}$. In fact, using $\pa_{\Lie}=\pa+[\hat\Theta,-]$, we find
$$
[\nabla_{\hbar\pa_\hbar}, \pa_{\Lie}]=[\nabla_{\hbar\pa_\hbar}, \pa+{1\over \hbar }[\hat\Theta,-]_\star]=\bbracket{\nabla_{\hbar\pa_\hbar}(\hat\Theta/\hbar),-}_\star.
$$
Here $\hat\Theta$ is viewed as an element in $C^1(\g; \gl_r(\W_{2n}))$ so $\nabla_{\hbar\pa_\hbar}$ can be applied.   To derive the last equality, we have used the fact that the Moyal commutator $[-,-]_\star$ is compatible with $\nabla_{\hbar\pa_\hbar}$. The above commutator relation will play an important role in deriving the algebra index below.

\begin{lem}The free expectation value map
$
\abracket{-}_{free}\in \Hom_{\mathbb{C}\LS{\hbar}}(C_{-\*}(\gl_r(\mathcal{W}_{2n})),\Omega^{-\bullet}_{{2n}})
$ is flat with respect to $\nabla_{\hbar\pa_\hbar}$. In other words, for any $\OO\in C_{-\*}(\gl_r(\mathcal{W}_{2n}))$,
$$
\nabla_{\hbar\pa_\hbar}\abracket{ \OO}_{free} =\abracket{ \mathcal \nabla_{\hbar\pa_\hbar} (\OO)}_{free}.
$$
Similarly, $\int_{BV}$ is flat with respect to $\nabla_{\hbar\pa_\hbar}$.
\end{lem}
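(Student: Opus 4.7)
My plan is to reinterpret $\nabla_{\hbar\partial_\hbar}$ as the infinitesimal generator of a one-parameter $\C^\ast$-rescaling, and then verify that each ingredient in the definitions of $\langle-\rangle_{free}$ and $\int_{BV}$ is equivariant under this rescaling. Concretely, I would introduce the algebra automorphism
\[
 \rho_\lambda : y^i \mapsto \lambda^{1/2} y^i, \quad dy^i \mapsto \lambda^{1/2} dy^i, \quad \hbar \mapsto \lambda \hbar, \quad \lambda\in\C^\ast.
\]
Because the Moyal bidifferential $\hbar\hat\omega^{ij}\partial_{y^i}\partial_{y^j}$ has total weight $1 - 1/2 - 1/2 = 0$, this $\rho_\lambda$ descends to an automorphism of $\hOmega^{-\*}_{2n}$ and of $\gl_r(\mathcal{W}_{2n})$ (this is the classical scaling symmetry of the Weyl algebra). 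Differentiating $\rho_{e^t}$ at $t=0$ recovers $\nabla_{\hbar\partial_\hbar}$, so the two statements I want are equivalent to $\rho$-equivariance of $\langle-\rangle_{free}$ and $\int_{BV}$.

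The central step is to check that the bidifferential $\hbar\partial_P$ of Definition \ref{defn-free-expectation} is itself $\rho$-invariant as an operator. Under conjugation by $\rho_\lambda$, the scalar $\hbar$ picks up a factor $\lambda$, each differentiation $\mathcal{L}_{\partial_{y^i}}$ picks up $\lambda^{-1/2}$ (by the chain rule), and the propagator $\pi^*_{\alpha\beta}(P^{S^1})$ lives on $S^1[k]$ and is independent of $\hbar$ and $y$, so picks up $\lambda^0$. The three factors combine to give weight zero, so $e^{\hbar\partial_P}$ is $\rho$-equivariant. The remaining operations in Definition \ref{defn-free-expectation} --- the de Rham $\d$, the matrix multiplication and trace, and the integration over $S^1_{cyc}[m+1]$ --- only involve the $S^1$-geometry and the matrix/form structure, not the $\hbar$ or $y$-weights, so they commute with $\rho_\lambda$ as well. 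Composing all pieces yields $\rho_\lambda \circ \langle -\rangle_{free} = \langle \rho_\lambda(-)\rangle_{free}$, and differentiation at $\lambda=1$ gives the flatness identity.

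For the Berezin integration the argument is entirely parallel. The operator $\hbar\iota_\Pi/u$ has weight $1 + 2\cdot(-1/2) = 0$ under $\rho$, since each contraction $\iota_{\partial_{y^i}}$ scales as $\lambda^{-1/2}$ (acting on $dy^j$ of weight $\lambda^{1/2}$), so $u^n e^{\hbar\iota_\Pi/u}$ is $\rho$-equivariant, while the factor $u^n$ is untouched. The symbol map $\sigma$ is also equivariant: expanding a form as $\sum f_{I,J,k}\, y^I (dy)^J \hbar^k$, both $\sigma\circ \rho_\lambda$ and $\rho_\lambda\circ \sigma$ extract the $I=J=0$ summand and then rescale $\hbar^k$ by $\lambda^k$. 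Composing gives $\rho$-equivariance of $\int_{BV}$, and differentiating at $\lambda=1$ delivers its flatness.

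I expect no serious obstacle; the only subtlety is the sign/weight bookkeeping for the derivation- and contraction-type operators, which is dictated by the usual chain rule $\partial_{y^i}\circ \rho_\lambda = \lambda^{1/2}\rho_\lambda \circ \partial_{y^i}$ and its $\iota$-analogue. Both flatness statements ultimately reflect the single underlying fact that the Weyl quantization, the propagator on $S^1$, and the symbol/Berezin machinery are all intrinsic to the symplectic vector space $(\R^{2n},\omega)$ and compatible with its natural $\C^\ast$-scaling.
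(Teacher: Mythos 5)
Your proposal is correct and is exactly the ``direct check'' the paper leaves to the reader: the paper introduces the rescaling automorphism $y^i\mapsto\lambda y^i$, $\hbar\mapsto\lambda^2\hbar$ of $\W_{2n}$ immediately before defining $\nabla_{\hbar\pa_\hbar}$ as its infinitesimal generator, and your weight count (weight $0$ for $\hbar\pa_P$, $\d$, $\hbar\iota_\Pi/u$, and equivariance of $\sigma$, $\tr$, and the $S^1$-integrations) is precisely the verification intended. The only cosmetic slip is attributing the equivariance of $\d$ to its not involving $y$-weights --- it does, but has total weight zero because $dy^i$ and $y^i$ scale identically --- which your own weight assignment already handles correctly.
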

This lemma follows from a direct check. This also implies that $\abracket{-}_{free}$ is homotopic flat with respect to Getzler's Gauss-Manin connection.

With the above preparation,  we now come to the computation of the index. We will adopt the same notations as in Section \ref{sec:Char} in the rest of this section.

Recall the $\mathfrak{h}-$equivariant projection $\pr$ in Section \ref{sec:Char}. Let
$$\widehat{\gamma}:=\widehat\Theta-\pr\in C^1_{\Lie}(\mathfrak{g},\mathfrak{h};\mathfrak{g}).$$

By Theorem \ref{TRACE},  we can replace $\widehat{\Theta}$ by $\widehat{\gamma}$ in the expression of $\uTr$ since $\pr$ lies in $\mathfrak{h}$. We always assume this replacement in the following discussions. As in Definition \ref{defn:interaction}, when we insert $\widehat \gamma$ to define $\uTr$, it is viewed as a Lie algebra 1-cochain valued in $\gl_r(\W_{2n})$, i.e. an element of $C^1(\g,\mathfrak{h}; \gl_r(\W_{2n}))$, using the natural embedding $\g\subset \gl_r(\W_{2n})$.

The next proposition is the crucial observation in computing the algebraic index.
\begin{prop}\label{con:EACT}
In the cochain complex $C^\*(\mathfrak{g},\mathfrak{h};\mathbb{K})$
$$\nabla_{\hbar\partial_{\hbar}}(e^{R_3/u\hbar}\uTr(1))=\partial_{\Lie}\text{-exact term}.$$
\end{prop}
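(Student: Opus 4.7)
The strategy exploits the rescaling symmetry of the Weyl algebra under $(y^i,\hbar)\mapsto(\lambda y^i,\lambda^2\hbar)$, whose infinitesimal generator is proportional to $\nabla_{\hbar\partial_\hbar}=\hbar\partial_\hbar+\E$ and which preserves the Moyal $\star$-product. By the preceding lemma, $\langle-\rangle_{free}$ and $\int_{BV}$ are flat with respect to $\nabla_{\hbar\partial_\hbar}$, and $\nabla_{\hbar\partial_\hbar}$ is a derivation of the shuffle product on interaction vertices. Hence
\[
\nabla_{\hbar\partial_\hbar}\uTr(1) \;=\; \int_{BV}\Bigl\langle \xi \times_{sh} \sum_{k\geq 0}(\hat\gamma/\hbar)^{\otimes k}\Bigr\rangle_{free}, \qquad \xi := \nabla_{\hbar\partial_\hbar}(\hat\gamma/\hbar) \in C^1(\g,\mathfrak{h};\gl_r(\W_{2n})),
\]
which reduces the problem to analyzing the 1-cochain $\xi$.

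Next, substituting $\hat\Theta=\hat\gamma+\pr$ with $\pr=\pr_1+\pr_2+\pr_3$ into the Maurer--Cartan equation $\pa\hat\Theta+\tfrac12[\hat\Theta,\hat\Theta]_{\g}=0$ and using the curvature decomposition $R=R_1+R_2+R_3$ from Section~\ref{sec:Char} yields the universal Fedosov equation
\[
\pa\hat\gamma + \tfrac{1}{2\hbar}[\hat\gamma,\hat\gamma]_\star + \tfrac{1}{\hbar}[\pr,\hat\gamma]_\star + R_1+R_2+R_3 \;=\; 0.
\]
Applying $\nabla_{\hbar\partial_\hbar}$ and using that it commutes with $\pa$, is a derivation of $\star$, satisfies $\nabla_{\hbar\partial_\hbar}(1/\hbar)=-1/\hbar$, leaves $\pr_1,\pr_2,R_1,R_2$ as eigenvectors with eigenvalue $1$ (being weight-$2$ elements under $\wt(y)=1$, $\wt(\hbar)=2$), and that $\pr_3-\nabla_{\hbar\partial_\hbar}\pr_3$ is central-valued in $Z(\g)$ so commutes with $\hat\gamma$ under $\star$, one derives the compact identity
\[
\pa_\Lie\xi \;=\; \tfrac{1}{\hbar}\bigl(R_3-\nabla_{\hbar\partial_\hbar}R_3\bigr),
\]
whose right-hand side is a central-valued 2-cochain.

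Finally, insert $\xi$ into the correlator. Using the cocycle identity $(\pa_\Lie+b+uB)\uTr=0$ together with a BRST/Gauss--Manin argument in the spirit of Getzler~\cite{getzler1993cartan}, the centrality of $\pa_\Lie\xi$ implies that the insertion of $\xi$ contributes a $\pa_\Lie$-exact piece plus a multiplicative central correction equal to $\pa_\Lie\xi/u$; the $u$-factor arises because $\int_{BV}$ is a cochain map intertwining $\hbar\Delta+u\d$ with $b+uB$. This gives
\[
\nabla_{\hbar\partial_\hbar}\uTr(1) \;=\; \tfrac{1}{u\hbar}\bigl(R_3-\nabla_{\hbar\partial_\hbar}R_3\bigr)\,\uTr(1) \;+\; \pa_\Lie(\eta)
\]
for an explicit $0$-cochain $\eta$. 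Since $(R_3-\nabla_{\hbar\partial_\hbar}R_3)/u\hbar = -\nabla_{\hbar\partial_\hbar}(R_3/u\hbar)$, the Leibniz rule together with the centrality and $\pa_\Lie$-closedness of $R_3$ yield $\nabla_{\hbar\partial_\hbar}(e^{R_3/u\hbar}\uTr(1)) = \pa_\Lie(e^{R_3/u\hbar}\eta)$, establishing the proposition.

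The principal obstacle is the insertion step: rigorously translating the centrality of $\pa_\Lie\xi$ into a multiplicative central correction divided by $u$. This is the algebraic manifestation of exact semiclassical approximation and demands careful Feynman-diagrammatic bookkeeping of how central insertions interact with the cyclic structure through the BV integration.
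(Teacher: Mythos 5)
Your overall strategy coincides with the paper's: reduce $\nabla_{\hbar\pa_\hbar}\uTr(1)$ to a single insertion of $\xi=\nabla_{\hbar\pa_\hbar}(\widehat\gamma/\hbar)$ using the $\nabla_{\hbar\pa_\hbar}$-flatness of $\abracket{-}_{free}$ and $\int_{BV}$, compute $\pa_{\Lie}\xi$ from the Maurer--Cartan equation for $\widehat\Theta$ and the structure of $\pr$ (your ``universal Fedosov equation'' is a correct reformulation, and your conclusion that $\pa_{\Lie}\xi$ is a central, scalar-valued $2$-cochain built from $R_3$ matches the paper's identity $\pa_{\Lie}(\nabla_{\hbar\pa_\hbar}(\widehat\gamma/\hbar))=\nabla_{\hbar\pa_\hbar}(R_3/\hbar)$ up to a sign that, as written, is compensated by a second sign in your last step), and then convert the insertion into a $\pa_{\Lie}$-exact term plus a central correction.

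However, the step you yourself flag as ``the principal obstacle'' is a genuine gap, and it is exactly the step that carries the content of the proposition: you assert, without argument, that inserting $\xi$ produces a $\pa_{\Lie}$-exact piece plus the central correction $\pa_{\Lie}\xi/u$, and you suggest this requires careful Feynman-diagrammatic bookkeeping. It does not; the missing argument is a short algebraic observation. The sum over all insertion positions of $\xi$ on the circle, with the basepoint observable $1$ fixed, is by definition $\abracket{1\otimes\xi}_{int}$, and the cyclic $1$-tensor $1\otimes\xi$ is precisely $B(\xi)$, the Connes operator applied to $\xi$ regarded as a cyclic $0$-chain. Since $b$ vanishes on $0$-chains, $uB(\xi)=(b+uB)(\xi)$, hence $\nabla_{\hbar\pa_\hbar}\uTr(1)=u^{-1}\uTr\bracket{(b+uB)\xi}$. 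The cocycle identity $(\pa_{\Lie}+b+uB)\uTr=0$ of Theorem \ref{TRACE} then gives $u^{-1}\uTr\bracket{(b+uB)\xi}=\pa_{\Lie}\bbracket{u^{-1}\uTr(\xi)}-u^{-1}\uTr(\pa_{\Lie}\xi)$, and because $\pa_{\Lie}\xi$ is valued in central scalars it factors out of $\uTr$ by $\mathbb{K}$-linearity, yielding $-u^{-1}(\pa_{\Lie}\xi)\cdot\uTr(1)=-\nabla_{\hbar\pa_\hbar}(R_3/u\hbar)\cdot\uTr(1)$. No diagrammatics and no appeal to Getzler's connection are needed at this point. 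Until you supply an argument of this kind your proof is incomplete at its crux; I would also recheck the relative sign between your claimed identity $\pa_{\Lie}\xi=\tfrac{1}{\hbar}(R_3-\nabla_{\hbar\pa_\hbar}R_3)$ and the sign of the correction term, since the two discrepancies with the computation above currently cancel by accident.
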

\begin{proof} Since $\abracket{-}_{free}$ and $\int_{BV}$ are $\nabla_{\hbar\pa_\hbar}$-flat, we have
\begin{align*}
\nabla_{\hbar\partial_{\hbar}}(\uTr(1)) & =\int_{BV}\nabla_{\hbar\partial_{\hbar}}\langle 1\rangle_{int}=\int_{BV}\langle 1\otimes \nabla_{\hbar\partial_{\hbar}}(\widehat\gamma/\hbar)\rangle_{int}
=\int_{BV}\langle B(\nabla_{\hbar\partial_{\hbar}}(\widehat\gamma/\hbar))\rangle_{int}\\ &= \uTr(B(\nabla_{\hbar\partial_{\hbar}}(\widehat\gamma/\hbar)))=u^{-1}\uTr((b+uB)(\nabla_{\hbar\partial_{\hbar}}(\widehat\gamma/\hbar)))\\
&=u^{-1}\uTr((\partial_{\Lie}+uB+b)(\nabla_{\hbar\partial_{\hbar}}(\widehat\gamma/\hbar)))-u^{-1}\uTr(\partial_{\Lie}(\nabla_{\hbar\partial_{\hbar}}(\widehat\gamma/\hbar))).
\end{align*}
By Theorem \ref{TRACE}, the first term is
$$
\partial_{\Lie}\bbracket{u^{-1}\uTr((\nabla_{\hbar\partial_{\hbar}}(\widehat\gamma/\hbar)))}
$$
which is $\partial_{\Lie}\text{-exact}$.  To compute the second term, we observe
$$
\nabla_{\hbar\partial_{\hbar}}(\pr/\hbar)=\nabla_{\hbar\partial_{\hbar}}(\pr_3/\hbar)
$$
which is valued in the center of $\gl_r(\W_{2n})$. This is because $\pr_1/\hbar, \pr_2/\hbar$ are valued in $\hbar^{-1}\sp_{2n}, \gl_r$ which are annihilated are $\nabla_{\hbar\partial_{\hbar}}$. It follows that
\begin{align*}
   \partial_{\Lie}(\nabla_{\hbar\partial_{\hbar}}(\widehat\gamma/\hbar))&=\partial(\nabla_{\hbar\partial_{\hbar}}(\widehat\gamma/\hbar))+\bbracket{\hat\Theta/\hbar, \nabla_{\hbar\partial_{\hbar}}(\widehat\gamma/\hbar)}_{\star}\\
   &=\partial(\nabla_{\hbar\partial_{\hbar}}(\widehat\Theta/\hbar))+\bbracket{\hat\Theta/\hbar, \nabla_{\hbar\partial_{\hbar}}(\widehat\Theta/\hbar)}_{\star}-\partial(\nabla_{\hbar\partial_{\hbar}}(\pr_3/\hbar))\\
      &=\nabla_{\hbar\partial_{\hbar}}\bracket{\partial(\widehat\Theta/\hbar)+{1\over 2}\bbracket{\hat\Theta/\hbar, \widehat\Theta/\hbar}_{\star}}-\partial(\nabla_{\hbar\partial_{\hbar}}(\pr_3/\hbar))\\
      &=-\nabla_{\hbar\partial_{\hbar}}(\partial(\pr_3/\hbar))=\nabla_{\hbar\partial_{\hbar}}(R_3/\hbar).
\end{align*}
Combining the above computations, we find
$$
\nabla_{\hbar\partial_{\hbar}}(\uTr(1))=-\nabla_{\hbar\partial_{\hbar}}(R_3/u\hbar) \uTr(1)+\partial_{\Lie}\bbracket{u^{-1}\uTr((\nabla_{\hbar\partial_{\hbar}}(\widehat\gamma/\hbar)))}.
$$
Since $R_3$ is $\pa_{\Lie}$-closed,
$$
\nabla_{\hbar\partial_{\hbar}}(e^{R_3/u\hbar}\uTr(1))=\partial_{\Lie}\bracket{e^{R_3/u\hbar}\bbracket{u^{-1}\uTr((\nabla_{\hbar\partial_{\hbar}}(\widehat\gamma/\hbar)))}}.
$$
\end{proof}
\begin{rem}
  This argument is the major difference between \cite{PFLAUM20101958,WILLWACHER2015277,feigin1989riemann} and our method. They use large N techniques or explicit evaluations on certain Lie subalgebra to reduce the computations to tree and one loop diagrams.
\end{rem}

\noindent \textbf{Semi-classical analysis and algebraic index}. Recall that given an even cochain $A=\sum_{p\ even}A_p,  A_p\in C_{Lie}^p(\g,\mathfrak{h};\mathbb{K})$, we denote $A_u:=\sum_{p}u^{-p/2}A_p$.

 The following proposition generalizes the semi-classical analysis of \cite[Lemma 2.59]{BVQandindex}.
\begin{prop}\label{ONELOOP} In the cochain complex $C^\*(\mathfrak{g},\mathfrak{h};\mathbb{K})$
$$\uTr(1)=u^ne^{-R_3/u\hbar}(\hat{A}(\mathfrak{sp}_{2n})_u\cdot Ch(\gl_r)_{u}+O(\hbar)).$$
Precisely, this means that $e^{R_3/u\hbar} \uTr(1)$ is a cochain in  $C^\*(\mathfrak{g},\mathfrak{h};\mathbb{K})$ whose value in $\mathbb K$ has only non-negative powers in $\hbar$ with leading $\hbar$-order $u^n\hat{A}(\mathfrak{sp}_{2n})_u\cdot Ch(\gl_r)_{u}$.
\end{prop}
\begin{proof}

This lemma is proved by Feynman diagram computations (see e.g.  \cite{Costello_2011}). Recall
$$
\uTr(1)=\sigma(u^ne^{\hbar \iota_{\Pi}/u}(\sum_{m=0}^{2n}\tr\int_{S^1_{cyc}[m+1]}d\theta_0\cdots d\theta_{m}e^{\hbar \partial_P}(1\otimes \underbrace{d_{2n}\widehat{\gamma}/\hbar\otimes\cdots\otimes d_{2n}\widehat{\gamma}/\hbar}_{\text{m}\ \widehat{\gamma}/\hbar' s}))).
$$
By the standard technique of Feynman diagrams (see e.g. \cite{bessis1980quantum}),  this can be expressed as
$$
\uTr(1)=u^n \bracket{\sum_{\substack{\Gamma\ \\}} {\hbar^{g(\Gamma)-k(\Gamma)}W_{\Gamma}\over |\text{Aut}(\Gamma)|}}.
$$
Let us explain the notation and terminology.
The sum of $\Gamma$ is over all diagrams (possibly disconnected) where vertices are given by $\hat \gamma$ and propagator is $\iota_{\hat{\Pi}}/u+P$. Precisely, let us view $\hat \gamma\in C^1(\g,\mathfrak{h}; \gl_r(\W_{2n}))$ and decompose
$$
\hat \gamma=\sum_{i\geq 0}\hat \gamma_i \hbar^i, \quad  \hat\gamma_i=\sum_{m\geq 0} \hat\gamma_i^{(m)}.
$$
$\hat \gamma_i^{(m)}$ collects those terms in $\hat \gamma$ whose value in $\gl_r(\W_{2n})$ is homogeneous of degree $m$ in $y$'s and degree $i$ in $\hbar$. Then in the above Feynman diagram expansion, $d_{2n}(\widehat \gamma_i^{(m)})$ contributes a $m$-valency vertex of loop number $i$ (with coefficient valued in $C^1(\g, \mathfrak{h}; \gl_r(\C))$). Since
$$
d_{2n}(\widehat \gamma_i^{(m)})=\sum_k dy^k {\pa\over \pa y^k}(\widehat \gamma_i^{(m)}),
$$
there will be two types of edges attached to the vertex,  by $y$'s or $dy$'s. We use blue color to indicate the $y$-edge and use red color to indicate the $dy$-edge. Therefore the $m$-valency vertex $d_{2n}(\widehat \gamma_i^{(m)})$ contains 1 red edge and $m-1$ blue edges. See the figure below.

\begin{figure}[H]

\definecolor{ccccff}{rgb}{0.8,0.8,1}
\definecolor{ffwwzz}{rgb}{1,0.4,0.6}
\definecolor{qqwuqq}{rgb}{0,0.39215686274509803,0}
\begin{tikzpicture}[line cap=round,line join=round,>=triangle 45,x=0.5cm,y=0.5cm]
\clip(-14.344921328686345,-4.22815692666691) rectangle (13.809418909855031,9.271859144440677);
\draw [line width=2pt,color=ffwwzz] (4.31874568416901,5.181736373538365)-- (4.398165543798182,3.037400163550744);
\draw [line width=2pt,color=ccccff] (4.398165543798182,3.037400163550744)-- (2.4126690530688886,2.640300865404888);
\draw [line width=2pt,color=ccccff] (4.398165543798182,3.037400163550744)-- (3.8819364562085656,1.0121937430068801);
\draw [line width=2pt,color=ccccff] (4.398165543798182,3.037400163550744)-- (5.629173368050344,1.2901632517089792);
\draw (5.271783999719071,4.380286022611292) node[anchor=north west] {$\cdots$};
\draw (2.7700584214001616,7.279110899076038) node[anchor=north west] {$dy^{i_{m}}$};
\draw (0.7051420710416967,4.224347005207094) node[anchor=north west] {$y^{i_1}$};
\draw (2.6906385617709896,1.047552620040249) node[anchor=north west] {$y^{i_{2}}$};
\draw (6.145402455639959,1.5432015672590326) node[anchor=north west] {$y^{i_{3}}$};
\draw (-12.312463153507112,4.062606584094608) node[anchor=north west] {$E_{ab}\hbar^{l}y^{i_{1}}\cdots y^{i_{m-1}}dy^{i_m}$};
\begin{scriptsize}
\draw [fill=qqwuqq] (4.398165543798182,3.037400163550744) circle (2.5pt);
\end{scriptsize}
\end{tikzpicture}
  \caption{This is the loop number $l$ vertex of valency $m$. The vertex is matrix valued. Here $E_{ab}$ is the fundamental matrix in $\gl_r(\C)$, whose only nontrivial entry is 1 at the $a$-th row and $b$-th column. }
  \end{figure}
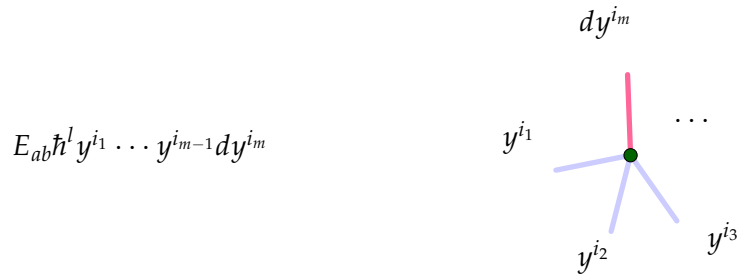

Note that our vertices are matrix valued, therefore it is important to keep their orders. This is a subtle difference with the usual Feynman diagram technique. For each diagram $\Gamma$,  let $V(\Gamma)$ be the set of ordered vertices of $\Gamma$. We use a bijection
 $$
 \chi:\{1,...,|V(\Gamma)|\}\rightarrow V(\Gamma),
 $$
 to label the order.  Let
$$
\ell :V(\Gamma)\rightarrow \mathbb{Z}_{\geq 0}
$$
be the map which assigns the loop number of each vertex. Let $b_1(\Gamma)$ be the first Betti number of $\Gamma$. We say $\Gamma$ is a $g$-loop diagram if
$$
g=g(\Gamma)= b_1(\Gamma)+\sum_{v\in V(\Gamma)} \ell(v).
$$
A diagram $\Gamma$ of loop number $g=0$ is called a tree diagram.  We denote
$$
k(\Gamma)=\sharp\{\text{connected components of $\Gamma$}\}.
$$

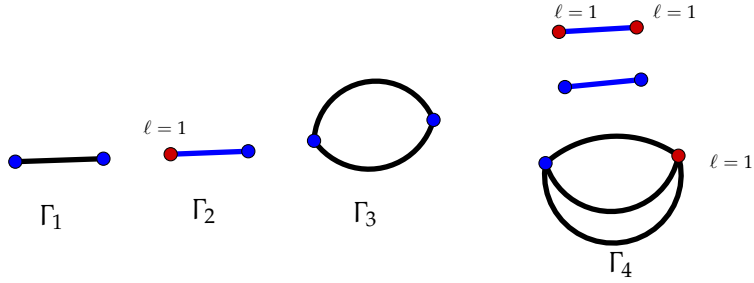
\begin{figure}[H]

\definecolor{ccqqqq}{rgb}{0.8,0,0}
\definecolor{qqqqff}{rgb}{0,0,1}
\begin{tikzpicture}[line cap=round,line join=round,>=triangle 45,x=0.5cm,y=0.5cm]
\clip(-10.742020626832202,-3.622815692666691) rectangle (13.412319611709174,9.271859144440677);
\draw [line width=2pt] (-10.572477996300686,2.6800107952194736)-- (-8.22959213724012,2.759430654848645);
\draw [line width=2pt,color=qqqqff] (-6.442645295583756,2.8785604442924018)-- (-4.377728945225291,2.957980303921573);
\draw (-10.215088627969413,1.8063923392985914) node[anchor=north west] {$\Gamma_1$};
\draw (-6.164675786881655,1.9652320585569336) node[anchor=north west] {$\Gamma_2$};
\draw (-1.876003366906382,1.8858121989277625) node[anchor=north west] {$\Gamma_3$};
\draw [line width=2pt,color=qqqqff] (3.8819364562085688,6.134774689088418)-- (5.946852806567033,6.253904478532174);
\draw [line width=2pt,color=qqqqff] (4.040776175466911,4.665507285948752)-- (6.06598259601079,4.864056935021679);
\draw [shift={(-0.9828817674902689,3.17555890241257)},line width=2pt]  plot[domain=0.3831285695108012:3.1049554169837195,variable=\t]({1*1.6487166676882155*cos(\t r)+0*1.6487166676882155*sin(\t r)},{0*1.6487166676882155*cos(\t r)+1*1.6487166676882155*sin(\t r)});
\draw [shift={(-1.173196877740176,4.263073818126329)},line width=2pt]  plot[domain=3.755543095488054:6.015726198186052,variable=\t]({1*1.7828889178357024*cos(\t r)+0*1.7828889178357024*sin(\t r)},{0*1.7828889178357024*cos(\t r)+1*1.7828889178357024*sin(\t r)});
\draw [shift={(5.267092639385909,3.1765002792214774)},line width=2pt]  plot[domain=3.4401081258406845:6.096911400331047,variable=\t]({1*1.8231771198815856*cos(\t r)+0*1.8231771198815856*sin(\t r)},{0*1.8231771198815856*cos(\t r)+1*1.8231771198815856*sin(\t r)});
\draw [shift={(5.4170614274633175,0.5070558514435932)},line width=2pt]  plot[domain=0.9573661894053154:2.29646802958683,variable=\t]({1*2.851726637517952*cos(\t r)+0*2.851726637517952*sin(\t r)},{0*2.851726637517952*cos(\t r)+1*2.851726637517952*sin(\t r)});
\draw [shift={(5.314674441925687,2.32954419401343)},line width=2pt]  plot[domain=-3.313474481234637:0.28412339304719564,variable=\t]({1*1.8169000117030647*cos(\t r)+0*1.8169000117030647*sin(\t r)},{0*1.8169000117030647*cos(\t r)+1*1.8169000117030647*sin(\t r)});
\draw (4.9143946313878,0.5519036728214657) node[anchor=north west] {$\Gamma_4$};
\draw (-7.44370873902665,4.040576092796707) node[anchor=north west] {\tiny{$\ell=1$}};
\draw (7.614669858779639,3.14927819835914) node[anchor=north west] {\tiny{$\ell=1$}};
\draw (3.481804770614665,7.1246222325989125) node[anchor=north west] {\tiny{$\ell=1$}};
\draw (6.1056925258253765,7.120271179817696) node[anchor=north west] {\tiny{$\ell=1$}};
\begin{scriptsize}
\draw [fill=qqqqff] (-10.572477996300686,2.6800107952194736) circle (2.5pt);
\draw [fill=qqqqff] (-8.22959213724012,2.759430654848645) circle (2.5pt);
\draw [fill=ccqqqq] (-6.442645295583756,2.8785604442924018) circle (2.5pt);
\draw [fill=qqqqff] (-4.377728945225291,2.957980303921573) circle (2.5pt);
\draw [fill=ccqqqq] (3.8819364562085688,6.134774689088418) circle (2.5pt);
\draw [fill=ccqqqq] (5.946852806567033,6.253904478532174) circle (2.5pt);
\draw [fill=qqqqff] (4.040776175466911,4.665507285948752) circle (2.5pt);
\draw [fill=qqqqff] (6.06598259601079,4.864056935021679) circle (2.5pt);
\draw [fill=qqqqff] (-2.6304920333835136,3.2359498126236717) circle (2.5pt);
\draw [fill=qqqqff] (0.5463023517833555,3.7918888300278697) circle (2.5pt);
\draw [fill=qqqqff] (3.5245470878772953,2.6403008654048885) circle (2.5pt);
\draw [fill=ccqqqq] (7.058730841375437,2.8388505144778162) circle (2.5pt);
\end{scriptsize}
\end{tikzpicture}
  \caption{Here are some examples of graphs: $\Gamma_1$ is a tree diagram; $\Gamma_2, \Gamma_3$ are  1-loop diagrams; $\Gamma_4$ is a disconnected graph, where $k(\Gamma_4)=3, g(\Gamma_4)=5$.}\label{TREEONELOOP}
\end{figure}

There will be two types of edges connecting vertices in $\Gamma$, which we call red and blue propagators. The red propagator connects the red $dy$-edges, while the blue propagator connects the blue $y$-edges. The Feynman rule of the red propagator is given by $\iota_{\Pi}/u$, and that of the blue propagator is given by $\pa_P$. See Figure \ref{Feynmanrule}.

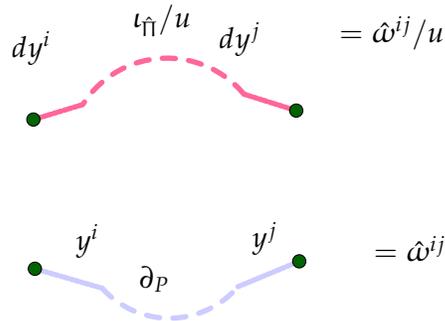
\begin{figure}[H]

\definecolor{ccccff}{rgb}{0.8,0.8,1}
\definecolor{ffwwzz}{rgb}{1,0.4,0.6}
\definecolor{qqwuqq}{rgb}{0,0.39215686274509803,0}
\begin{tikzpicture}[line cap=round,line join=round,>=triangle 45,x=0.5cm,y=0.5cm]
\clip(-14.344921328686345,-5.622815692666691) rectangle (13.809418909855031,9.271859144440677);
\draw [line width=2pt,dash pattern=on 1pt off 1pt,color=ffwwzz] (-3.74237006819192,2.124071777815276)-- (-2.4319423843105863,2.5211710759611314);
\draw [line width=2pt,dash pattern=on 1pt off 1pt,color=ffwwzz] (1.8567300356646868,2.7991405846632307)-- (3.2465775791751916,2.3623313567027893);
\draw [line width=2pt,dash pattern=on 1pt off 1pt,color=ccccff] (-3.702660138377334,-1.8469212036432805)-- (-1.91571,-2.36315);
\draw [line width=2pt,dash pattern=on 1pt off 1pt,color=ccccff] (1.697890316406343,-2.323440361418307)-- (3.325997438804363,-1.6483715545703528);
\draw [shift={(-0.18662797478103318,1.1022064659511712)},line width=2pt,dash pattern=on 5pt off 5pt,color=ffwwzz]  plot[domain=0.6930421752666642:2.577999041154312,variable=\t]({1*2.6561056760042066*cos(\t r)+0*2.6561056760042066*sin(\t r)},{0*2.6561056760042066*cos(\t r)+1*2.6561056760042066*sin(\t r)});
\draw [shift={(-0.12630011257076135,-0.7607703777751121)},line width=2pt,dash pattern=on 5pt off 5pt,color=ccccff]  plot[domain=3.87190454308131:5.574850413934222,variable=\t]({1*2.4020008740529413*cos(\t r)+0*2.4020008740529413*sin(\t r)},{0*2.4020008740529413*cos(\t r)+1*2.4020008740529413*sin(\t r)});
\draw (-1.3597742793167682,5.452454127605102) node[anchor=north west] {$\iota_{\hat{\Pi}}/u$};
\draw (4.120196035096081,5.142026443723779) node[anchor=north west] {$=\hat{\omega}^{ij}/u$};
\draw (-4.5849806998606473,4.744927145577923) node[anchor=north west] {$dy^i$};
\draw (0.92731610413753576,4.983186724465437) node[anchor=north west] {$dy^j$};
\draw (-2.908461542085617,-0.3887129007819069) node[anchor=north west] {$y^i$};
\draw (1.777310176035515,-0.3298731815235647) node[anchor=north west] {$y^j$};
\draw (-1.2009345600584247,-1.49678358994737126) node[anchor=north west] {$\partial_P$};
\draw (5.033524420831555,-0.6945143044901956) node[anchor=north west] {$=\hat{\omega}^{ij}$};
\begin{scriptsize}
\draw [fill=qqwuqq] (-3.74237006819192,2.124071777815276) circle (2.5pt);
\draw [fill=qqwuqq] (3.2465775791751916,2.3623313567027893) circle (2.5pt);
\draw [fill=qqwuqq] (-3.702660138377334,-1.8469212036432805) circle (2.5pt);
\draw [fill=qqwuqq] (3.325997438804363,-1.6483715545703528) circle (2.5pt);
\end{scriptsize}
\end{tikzpicture}
  \caption{Feynman rules for red and blue propagators.}\label{Feynmanrule}
\end{figure}

 The Feynman integral $W_{\Gamma}\in C^{|V(\Gamma)|}(\g, \mathfrak{h};\C)$ is defined as follows. Given $\xi_i\in \g/\mathfrak{h}$,

 \begin{align*}
  W_{\Gamma}(\xi_1\wedge\cdots\wedge\xi_{m})
  :=\sum_{\varepsilon\in S_{m}\ \text{and}\ \chi}\text{sign}(\varepsilon)W_{\Gamma^{\chi}}( \xi_{\varepsilon(1)}\otimes\cdots\otimes \xi_{\varepsilon(m)}), \quad m=|V(\Gamma)|.
\end{align*}
Here $\Gamma^{\chi}$ means the graph $\Gamma$ equipped with an ordering map $\chi$, and
$$
W_{\Gamma^{\chi}}(\xi_1\otimes\cdots\otimes \xi_{m}):=$$
$$
\sigma(\tr\int_{S^1_{cyc}[m+1]}d\theta_0\cdots d\theta_{m}\prod_{e\in E(\Gamma)}((\iota_{\hat{\Pi}}/u)_{e}+(\pa_P)_{e})(1\otimes d_{2n}{\widehat{\gamma}_{\upsilon(\chi(1))}}(\xi_{1})\otimes\cdots\otimes d_{2n}{\widehat{\gamma}_{\upsilon(\chi(m))}}(\xi_{m}))).
$$
Here $(\iota_{\hat{\Pi}}/u)_{e}$ means applying the operator $\iota_{\hat{\Pi}}/u$ to the two vertices indexed by the two ends of the edge $e$. The definition of $(\pa_P)_{e}$ is similar. %See Fig.\ref{CONTRACTION1}.

\begin{figure}[H]
  \centering
\definecolor{ccccff}{rgb}{0.8,0.8,1}
\definecolor{ccqqqq}{rgb}{0.8,0,0}
\definecolor{zzttff}{rgb}{0.6,0.2,1}
\definecolor{ffwwzz}{rgb}{1,0.4,0.6}
\definecolor{cqcqcq}{rgb}{0.93,0.93,0.93}
\definecolor{uuuuuu}{rgb}{0.26666666666666666,0.26666666666666666,0.26666666666666666}
\begin{tikzpicture}[line cap=round,line join=round,>=triangle 45,x=0.5cm,y=0.5cm]
\clip(-7.654772264337302,-8.677404908666313) rectangle (11.951218868530606,11.195420365168422);
\draw [line width=2.2pt,color=cqcqcq] (0,0) circle (1.423cm);
\draw [shift={(1.8876840259623044,1.4566124389126767)},line width=2pt,dash pattern=on 5pt off 5pt,color=ccccff]  plot[domain=-1.2465225883591877:2.5609374881552633,variable=\t]({1*2.517593468607931*cos(\t r)+0*2.517593468607931*sin(\t r)},{0*2.517593468607931*cos(\t r)+1*2.517593468607931*sin(\t r)});
\draw [shift={(1.2227893158436987,0.9435531069451457)},line width=2pt,dash pattern=on 5pt off 5pt,color=ccccff]  plot[domain=-0.9064282957434706:2.220843195539546,variable=\t]({1*2.3794074325140757*cos(\t r)+0*2.3794074325140757*sin(\t r)},{0*2.3794074325140757*cos(\t r)+1*2.3794074325140757*sin(\t r)});
\draw [shift={(2.304915382423598,-2.655160804883435)},line width=2pt,dash pattern=on 5pt off 5pt,color=ffwwzz]  plot[domain=-3.299441297475185:1.5876531732381831,variable=\t]({1*4.423627676182236*cos(\t r)+0*4.423627676182236*sin(\t r)},{0*4.423627676182236*cos(\t r)+1*4.423627676182236*sin(\t r)});
\draw [shift={(-2.584278876641646,-2.23951206565799)},line width=2pt,dash pattern=on 5pt off 5pt,color=ffwwzz]  plot[domain=1.5197109219933875:6.191569026218227,variable=\t]({1*3.7883557511393096*cos(\t r+0*3.7883557511393096*sin(\t r)},{0*3.7883557511393096*cos(\t r)+1*3.7883557511393096*sin(\t r)});
\draw (-2.271556542433589,4.088627632175724) node[anchor=north west] {\tiny{$d_{2n}\widehat{\gamma}_0$}};
\draw (-4.610817054270099,2.718997376257476) node[anchor=north west] {\tiny{$d_{2n}\widehat{\gamma}_0$}};
\draw (2.796075404463954,-0.85512812640879904) node[anchor=north west] {\tiny{$d_{2n}\widehat{\gamma}_0$}};
\draw (-0.45627194465139226,2.064551992380105) node[anchor=north west] {\tiny{$d_{2n}\widehat{\gamma}_1$}};
\draw (-4.523903891548935,-1.8116715450482149) node[anchor=north west] {\tiny{$d_{2n}\widehat{\gamma}_1$}};
\draw (-1.1410870726105198,-01.49209115200062375) node[anchor=north west] {\tiny{$d_{2n}\widehat{\gamma}_1$}};
\draw [shift={(3.0866289545016556,2.392932009408751)},line width=2pt,dash pattern=on 5pt off 5pt,color=ffwwzz]  plot[domain=-1.6888388165797394:3.0077805664165647,variable=\t]({1*3.3337151494009567*cos(\t r)+0*3.3337151494009567*sin(\t r)},{0*3.3337151494009567*cos(\t r)+1*3.3337151494009567*sin(\t r)});
\draw (-6.528299412555655,-3.8248581080683595) node[anchor=north west] {\tiny{$\iota_{\hat{\Pi}}/u$}};
\draw (3.972199216150965,6.082134076406964) node[anchor=north west] {\tiny{$\iota_{\hat{\Pi}}/u$}};
\draw (3.4243471137836634,-5.0575253383947825) node[anchor=north west] {\tiny{$\iota_{\hat{\Pi}}/u$}};
\draw (1.872099490409641,4.073343034393535) node[anchor=north west] {\tiny{$\pa_P$}};
\draw (3.196075404463954,4.621195136760834) node[anchor=north west] {\tiny{$\pa_P$}};
\begin{scriptsize}
\draw [fill=zzttff] (-0.21728449357682808,2.8376933217455917) circle (2pt);
\draw [fill=ccqqqq] (2.230350501390146,1.7678383920418728) circle (2pt);
\draw [fill=ccqqqq] (-2.0637166428382705,-1.959793243559332) circle (2pt);
\draw [fill=ccqqqq] (1.2050923212049016,-2.578268495871105) circle (2.5pt);
\draw [fill=zzttff] (2.694022168163878,-0.9175840543896053) circle (2.5pt);
\draw [fill=zzttff] (-2.390833354916236,1.543901489230593) circle (2pt);
\end{scriptsize}
\end{tikzpicture}

  \caption{An example of $W_{\Gamma^{\chi}}$. }\label{Diagram example}
\end{figure}
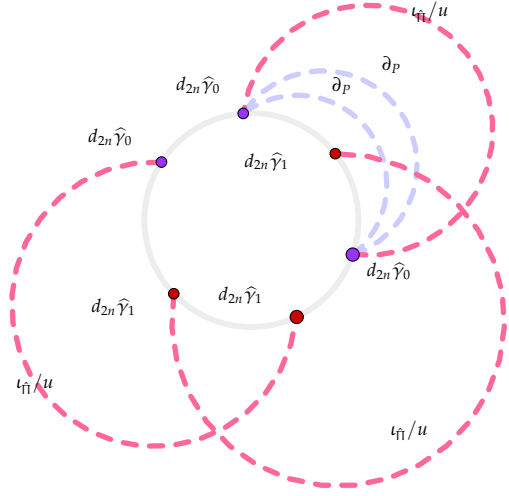

This gives the precise meaning of the Feynman diagram expansion
$$
\uTr(1)=u^n \bracket{\sum_{\substack{\Gamma\ \\}} {\hbar^{g(\Gamma)-k(\Gamma)}W_{\Gamma}\over |\text{Aut}(\Gamma)|}}.
$$
Note that by the symbol map $\sigma$ in the last step, all diagrams  do not contain external edges. In other words, all edges of vertices should be contracted by propagators.

The vertices of a tree diagram has only loop number $0$, and we observe that $\hat\gamma_0$ is valued in the scalar matrices $\C\PS{y^i}\cdot\text{Id}$. Since they are scalar matrices, we are free to move them around inside $\tr$ so their orders are not relevant. By collecting all contributions from tree diagrams, we can write
$$
\uTr(1)=u^n e^{\bracket{{1\over \hbar}\sum\limits^{\text{con. tree}}_{\Gamma_0}{W_{\Gamma_0}\over |\text{Aut}(\Gamma_0)|}}}\bracket{\sum^\prime_{\substack{\Gamma}} {\hbar^{g(\Gamma)-k(\Gamma)}W_{\Gamma}\over |\text{Aut}(\Gamma)|}}.
$$
Here $\sum\limits^{\text{con. tree}}_{\Gamma_0}$ is the sum of all connected tree diagrams, and $\sum\limits^\prime_{\substack{\Gamma}}$ is sum of all diagrams where the loop number of each connected component is at least $1$.

Observe that for each $\Gamma$ in the sum $\sum\limits^\prime_{\substack{\Gamma}}$, we have $g(\Gamma)-k(\Gamma)\geq 0$. Equality holds if and if each connected component of $\Gamma$ has loop number $1$. Therefore
$$
\uTr(1)=u^n e^{\bracket{{1\over \hbar}\sum\limits^{\text{con. tree}}_{\Gamma_0}{W_{\Gamma_0}\over |\text{Aut}(\Gamma_0)|}}}\bracket{\sum^{\text{1-loop}}_{\substack{\Gamma_1}} {W_{\Gamma_1}\over |\text{Aut}(\Gamma_1)|}+O(\hbar)}.
$$
Here $\sum\limits^{\text{1-loop}}_{\substack{\Gamma_1}}$  is sum of diagrams where the loop number of each connected component is precisely $1$.

The proposition follows by computing the tree diagrams ($g=0$) and 1-loop diagrams ($g=1$).

We first collect some details on the vertices and the curvatures. Recall the projection map for $f+\hbar A\in \mathfrak{g}$, where $f\in \mathbb{C}[[y^i]],A\in \gl_r(\mathcal{W}^+_{2n})$:
$$\pr_1((f+\hbar A))=\frac{1}{2}\partial_{y^i}\partial_{y^j}f(0)y^iy^j, \quad \pr_2((f+\hbar A))=hA_1(0).$$
 Here we write $\hbar A=\hbar A_1+\hbar^2 A_2+...$. The curvature has three parts:
\begin{align*}R_{1}((f+\hbar A),(g+\hbar B))&=\frac{1}{6}\hat\omega^{ij}(\partial_if(0)\partial_j\partial_p\partial_qg(0)+\partial_ig(0)\partial_j\partial_p\partial_qf(0))y^py^q\\
R_2((f+\hbar A),(g+\hbar B))&=\hbar \hat\omega^{ij} (\pa_if(0) \pa_jB_1(0)- \pa_iA_1(0) \pa_jg(0))\\
R_3((f+\hbar A),(g+\hbar B))&=\widehat\omega^{ij}\partial_if(0)\partial_j g(0)+O(\hbar^2).
\end{align*}

Let us also write down  a few leading terms in the expansion $\widehat{\gamma}=\sum_{i\geq 0}\widehat{\gamma}_i$:
\begin{align*}\widehat\gamma_0((f+\hbar A))&=\widehat{\gamma}(f)=\partial_if(0)y^i+\frac{1}{6}\partial_i\partial_j\partial_kf(0)y^i y^j y^k+O(y^4)\\
\widehat\gamma_1((f+\hbar A))&=\hbar A_1=\hbar\partial_i A_1(0)y^i+\hbar O(y^2) \\
\widehat{\gamma}_2(f+\hbar A)&=\hbar^2 A_2\\
\cdots\\
\widehat{\gamma}_l(f+\hbar A)&=\hbar^l A_l, l>0.
\end{align*}

Now we are ready to compute tree and 1-loop diagrams.

\noindent \textbf{Tree computation}: The only connected tree  without external edges contains two vertices with a red propagator, see Fig.\ref{Omega}. This term contributes $-\widehat{\omega}_0/u$. Here $\widehat{\omega}_0 \in C^2(\g, \mathfrak{h};\C)$ is the 2-cocycle
$$
  \widehat{\omega}_0((f+\hbar A),(g+\hbar B))=\widehat\omega^{ij}\partial_if(0)\partial_j g(0).
$$
In other words, $\hat \omega_0= R_3|_{\hbar=0}$ which descends to the symplectic 2-form under the Gelfand-Fuks map.

\begin{figure}[H]
  \centering
  \definecolor{ccccff}{rgb}{0.8,0.8,1}
  \definecolor{ffwwzz}{rgb}{1,0.4,0.6}
\definecolor{ccqqqq}{rgb}{0.8,0,0}
\definecolor{xdxdff}{rgb}{0.49019607843137253,0.49019607843137253,1}
\begin{tikzpicture}[line cap=round,line join=round,>=triangle 45,x=1cm,y=1cm]
\clip(-5.78,-2.34) rectangle (4.4,2.68);
\draw [line width=2pt,dash pattern=on 5pt off 5pt,color=ffwwzz] (-1.24,0)-- (1,0);
\draw (-3.8,0.5) node[anchor=north west] {$d_{2n}(\widehat{\gamma}^{(1)}_0)$};
\draw (1.58,0.5) node[anchor=north west] {$d_{2n}(\widehat{\gamma}^{(1)}_0)$};
\begin{scriptsize}
\draw [fill=xdxdff] (-1.24,0) circle (2.5pt);
\draw [fill=xdxdff] (1,0) circle (2.5pt);
\end{scriptsize}
\end{tikzpicture}

  \caption{This diagram gives $\hat{\omega}_0/\hbar u$}\label{Omega}
\end{figure}

\noindent \textbf{One-loop computation}: There are two types of 1-loop diagrams. Let us denote
\begin{itemize}
\item Type I:  Each connected component of $\Gamma_1$ has $b_1=1$ with only vertices of loop number $0$.
\item Type II: Each connected component of $\Gamma_1$ is a tree with one vertex of loop number $1$.
\end{itemize}
We have
$$
\sum^{\text{1-loop}}_{\substack{\Gamma_1}} {W_{\Gamma_1}\over |\text{Aut}(\Gamma_1)|}=\bracket{\sum\limits^{\text{1-loop}}_{\substack{\Gamma_1}: \text{type I}} {{W_{\Gamma_1}}\over |\text{Aut}(\Gamma_1)|}}\bracket{\sum\limits^{\text{1-loop}}_{\substack{\Gamma_1: \text{type II}}} {W_{\Gamma_1}\over |\text{Aut}(\Gamma_1)|}}.
$$
The reason we can separate these two contributions is because all the vertices of type I diagrams are scalar valued so we are free to move them inside the trace. This is similar to our tree diagram discussion above. We can further express the type I contributions in terms of connected diagrams
$$
\sum^{\text{1-loop}}_{\substack{\Gamma_1}} {W_{\Gamma_1}\over |\text{Aut}(\Gamma_1)|}=e^{\bracket{\sum\limits^{\text{con. 1-loop}}_{\substack{\Gamma_1}: \text{type I}} {{W_{\Gamma_1}}\over |\text{Aut}(\Gamma_1)|}}}\bracket{\sum^{\text{1-loop}}_{\substack{\Gamma_1: \text{type II}}} {W_{\Gamma_1}\over |\text{Aut}(\Gamma_1)|}}.
$$
Here $\sum\limits^{\text{con. 1-loop}}_{\substack{\Gamma_1}}$ is the sum of all connected type I 1-loop diagrams.

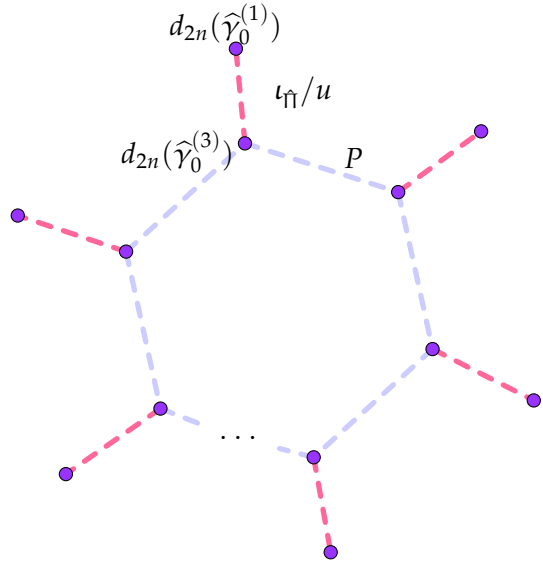
\begin{figure}[H]
  \centering
  \definecolor{ccccff}{rgb}{0.8,0.8,1}
\definecolor{ffwwzz}{rgb}{1,0.4,0.6}
\definecolor{ccqqqq}{rgb}{0.8,0,0}
\definecolor{zzttff}{rgb}{0.6,0.2,1}
\begin{tikzpicture}[line cap=round,line join=round,>=triangle 45,x=1cm,y=1cm]
\clip(-4.3,-3.72) rectangle (9.88,6.3);
\draw [line width=2pt,dash pattern=on 5pt off 5pt,color=ffwwzz] (2.28,3.5)-- (2.16,4.76);
\draw [line width=2pt,dash pattern=on 5pt off 5pt,color=ffwwzz] (4.317076581449593,2.8516798620205863)-- (5.42,3.66);
\draw [line width=2pt,dash pattern=on 5pt off 5pt,color=ffwwzz] (3.194153162899183,-0.6766402759588261)-- (3.42,-1.94);
\draw [line width=2pt,dash pattern=on 5pt off 5pt,color=ffwwzz] (4.774153162899183,0.7633597240411731)-- (6.12,0.08);
\draw [line width=2pt,dash pattern=on 5pt off 5pt,color=ffwwzz] (1.1570765814495916,-0.028320137979412907)-- (-0.1,-0.9);
\draw [line width=2pt,dash pattern=on 5pt off 5pt,color=ffwwzz] (0.7,2.06)-- (-0.74,2.54);
\draw [line width=2pt,dash pattern=on 5pt off 5pt,color=ccccff] (0.7,2.06)-- (2.28,3.5);
\draw [line width=2pt,dash pattern=on 5pt off 5pt,color=ccccff] (2.28,3.5)-- (4.317076581449593,2.8516798620205863);
\draw [line width=2pt,dash pattern=on 5pt off 5pt,color=ccccff] (4.317076581449593,2.8516798620205863)-- (4.774153162899183,0.7633597240411731);
\draw [line width=2pt,dash pattern=on 5pt off 5pt,color=ccccff] (4.774153162899183,0.7633597240411731)-- (3.194153162899183,-0.6766402759588261);
\draw [line width=2pt,dash pattern=on 5pt off 5pt,color=ccccff] (0.7,2.06)-- (1.1570765814495916,-0.028320137979412907);
\draw (1.14,5.5) node[anchor=north west] {$d_{2n}(\widehat{\gamma}^{(1)}_0)$};
\draw (1.8,-0.2) node[anchor=north west] {$\cdot\cdot\cdot$};
\draw [line width=2pt,dash pattern=on 5pt off 5pt,color=ccccff] (1.1570765814495916,-0.028320137979412907)-- (1.7,-0.24);
\draw [line width=2pt,dash pattern=on 5pt off 5pt,color=ccccff] (3.194153162899183,-0.6766402759588261)-- (2.74,-0.56);
\draw (3.46,3.58) node[anchor=north west] {$P$};
\draw (0.5,3.8) node[anchor=north west] {$d_{2n}(\widehat{\gamma}^{(3)}_0)$};
\draw (2.54,4.52) node[anchor=north west] {$\iota_{\hat{\Pi}}/u$};
\begin{scriptsize}
\draw [fill=zzttff] (2.28,3.5) circle (2.5pt);
\draw [fill=zzttff] (0.7,2.06) circle (2.5pt);
\draw [fill=zzttff] (1.1570765814495916,-0.028320137979412907) circle (2.5pt);
\draw [fill=zzttff] (3.194153162899183,-0.6766402759588261) circle (2.5pt);
\draw [fill=zzttff] (4.774153162899183,0.7633597240411731) circle (2.5pt);
\draw [fill=zzttff] (4.317076581449593,2.8516798620205863) circle (2.5pt);
\draw [fill=zzttff] (2.16,4.76) circle (2.5pt);
\draw [fill=zzttff] (5.42,3.66) circle (2.5pt);
\draw [fill=zzttff] (3.42,-1.94) circle (2.5pt);
\draw [fill=zzttff] (6.12,0.08) circle (2.5pt);
\draw [fill=zzttff] (-0.1,-0.9) circle (2.5pt);
\draw [fill=zzttff] (-0.74,2.54) circle (2.5pt);
\end{scriptsize}
\end{tikzpicture}
\caption{These diagrams give the $\hat{A}(\mathfrak{sp}_{2n})_u$}\label{AHAT}
\end{figure}
 A connected diagram of type I without external edges contains $k$ vertices of valency 3 that are connected by blue propagators to form a wheel, and they are further connected by red propagators to $k$ vertices of valency 1. See Figure \ref{AHAT}. The 3-valency vertex on the wheel is represented by $d_{2n}(\widehat{\gamma}^{(3)}_0)$ and the 1-valency vertex is represented by $d_{2n}(\widehat{\gamma}^{(1)}_0)$.

 The  sum of connected diagrams of type I  contributes
 $$\sum\limits^{\text{con. 1-loop}}_{\substack{\Gamma_1}: \text{type I}} {{W_{\Gamma_1}}\over |\text{Aut}(\Gamma_1)|}=\sum_{k\geq 0}C(k)\cdot\frac{1}{k!}\sum'_{\epsilon}\text{sign}(\epsilon)F_k(R_{1}(\xi_{\epsilon(1)},\xi_{\epsilon(2)}),...,R_{1}(\xi_{\epsilon(2k-1)},\xi_{\epsilon(2k)}))$$
where $F_k=k!\text{ch}_k=\text{tr}(X^k)\in (\Sym^k\mathfrak{sp}_{2n}^{\vee})^{\mathfrak{sp}_{2n}},$ and the prime means sum is over $\epsilon\in S_{2k}$ such that $\epsilon(2i-1)<\epsilon(2i)$. The coefficient $C(k)$ of this factor can be compute by integrals over $S^1[k]$
$$C(k)=\frac{u^{-k}}{k}\int_{S^1[k]}\pi^*_{12}(P^{S^1})\pi^*_{23}(P^{S^1})\cdot\cdot\cdot \pi^*_{k1}.(P^{S^1})d\theta^1\wedge...\wedge d\theta^k$$
When k is odd it is 0 and when k is even it is $\frac{u^{-k}}{k}\cdot\frac{2\zeta(k)}{{(2\pi i)}^k}$. Here $\zeta(k)$ is Riemann's zeta function. The total contribution from connected type I 1-loop diagram is

$$\sum_{k\geq 0}\frac{u^{-2k}}{2k}\frac{2\zeta(2k)}{(2\pi i)^{2k}}(2k)!ch_{2k}=\sum_{k\geq 0}\frac{2(2k-1)!\zeta(2k)}{(2\pi i)^{2k}}u^{-2k}ch_{2k}.$$
This is precisely $\log \hat{A}(\mathfrak{sp}_{2n})_u$ \cite{Hirzebruch1994Manifolds}. This computation is similar to that in \cite{Grady:2011jc,BVQandindex,CALAQUE20101839,Calaque}.

\begin{figure}[H]
  \centering
  \definecolor{ffwwzz}{rgb}{1,0.4,0.6}
\definecolor{ccqqqq}{rgb}{0.8,0,0}
\definecolor{xdxdff}{rgb}{0.49019607843137253,0.49019607843137253,1}
\begin{tikzpicture}[line cap=round,line join=round,>=triangle 45,x=1cm,y=1cm]
\clip(-5.78,-2.34) rectangle (4.4,2.68);
\draw [line width=2pt,dash pattern=on 5pt off 5pt,color=ffwwzz] (-1.24,0)-- (1,0);
\draw (-3.8,0.5) node[anchor=north west] {$d_{2n}(\widehat{\gamma}^{(1)}_0)$};
\draw (1.58,0.5) node[anchor=north west] {$ d_{2n}(\widehat{\gamma}^{(1)}_1)$};
\begin{scriptsize}
\draw [fill=xdxdff] (-1.24,0) circle (2.5pt);
\draw [fill=ccqqqq] (1,0) circle (2.5pt);
\end{scriptsize}
\end{tikzpicture}

  \caption{This diagram gives $R_2/u$}\label{Cherncharacter}
\end{figure}

A type II 1-loop diagram is computed by a trace of product of contributions from connected type II 1-loop diagrams. There is only one connected type II 1-loop diagram:  it contains two 1-valency vertices, one with loop number 0 and one with loop number 1, and contributes $R_2/u$. See Figure \ref{Cherncharacter}.

\begin{figure}[H]

\definecolor{ffqqtt}{rgb}{1,0,0.2}
\definecolor{ccqqqq}{rgb}{0.8,0,0}
\definecolor{zzttff}{rgb}{0.6,0.2,1}
\definecolor{cqcqcq}{rgb}{0.95,0.95,0.95}
\begin{tikzpicture}[line cap=round,line join=round,>=triangle 45,x=0.5cm,y=0.5cm]
\clip(-9.473836604763807,-6.559166208307436) rectangle (9.295723554263773,6.703950349764144);
\draw [line width=2pt,color=cqcqcq] (0,0) circle (1.502002805539927cm);
\draw [shift={(-1.3386701297502595,1.6001596533435798)},line width=2pt,dash pattern=on 5pt off 5pt,color=ffqqtt]  plot[domain=1.4386824968139298:3.096219025642099,variable=\t]({1*1.170674862228037*cos(\t r)+0*1.170674862228037*sin(\t r)},{0*1.170674862228037*cos(\t r)+1*1.170674862228037*sin(\t r)});
\draw [shift={(1.1959574900160403,1.935474857395687)},line width=2pt,dash pattern=on 5pt off 5pt,color=ffqqtt]  plot[domain=0.11133721473733463:1.9232800215379344,variable=\t]({1*1.0084017560964478*cos(\t r)+0*1.0084017560964478*sin(\t r)},{0*1.0084017560964478*cos(\t r)+1*1.0084017560964478*sin(\t r)});
\draw [shift={(-1.8744889484372422,-0.46467796663922983)},line width=2pt,dash pattern=on 5pt off 5pt,color=ffqqtt]  plot[domain=2.6347849098149374:4.134394949061946,variable=\t]({1*1.2870297999914175*cos(\t r)+0*1.2870297999914175*sin(\t r)},{0*1.2870297999914175*cos(\t r)+1*1.2870297999914175*sin(\t r)});
\draw [shift={(-0.4660441444638941,-1.962884225449742)},line width=2pt,dash pattern=on 5pt off 5pt,color=ffqqtt]  plot[domain=3.7616777102651597:5.196876510647548,variable=\t]({1*1.1752749755024883*cos(\t r)+0*1.1752749755024883*sin(\t r)},{0*1.1752749755024883*cos(\t r)+1*1.1752749755024883*sin(\t r)});
\draw [shift={(1.7734811864781361,-1.406335010207244)},line width=2pt,dash pattern=on 5pt off 5pt,color=ffqqtt]  plot[domain=-1.5317969914503085:0.1909055166026657,variable=\t]({1*0.9903209458470174*cos(\t r)+0*0.9903209458470174*sin(\t r)},{0*0.9903209458470174*cos(\t r)+1*0.9903209458470174*sin(\t r)});
\draw (3.1009745031883793,1.488712900781905) node[anchor=north west] {$\cdots$};
\draw (-5.661683342563566,0.826880737205479) node[anchor=north west] {$\sum \text{tr}$};
\draw (-1.27789659250839,3.07987802895) node[anchor=north west] {\tiny$d_{2n}\widehat{\gamma}^{(1)}_1$};
\draw (-4.27673012613439568,2.468224502875016) node[anchor=north west] {\tiny$d_{2n}\widehat{\gamma}^{(1)}_0$};
\draw (-3.0673012613439568,3.5006826780542406) node[anchor=north west] {\tiny$\iota_{\hat\Pi}/u$};
\draw (-3.155606711463445,5.4801942801473515) node[anchor=north west] {\Large{$\overbrace{}^{R_2/u}$}};
\begin{scriptsize}
\draw [fill=zzttff] (0.8478270904118058,2.8818782209620872) circle (2.5pt);
\draw [fill=ccqqqq] (2.1981156380726126,2.0475156891590314) circle (2.5pt);
\draw [fill=ccqqqq] (-1.1844573119580823,2.7606328498807637) circle (2.5pt);
\draw [fill=zzttff] (-2.5081401258431133,1.6532591946901263) circle (2.5pt);
\draw [fill=ccqqqq] (-2.9997371012470304,0.16003180650675478) circle (2.5pt);
\draw [fill=zzttff] (-2.577651138259624,-1.5426430128583273) circle (2.5pt);
\draw [fill=ccqqqq] (-1.422517041050551,-2.6458416663913464) circle (2.5pt);
\draw [fill=zzttff] (0.08134596843702753,-3.0029012120132745) circle (2.5pt);
\draw [fill=ccqqqq] (1.812093255581033,-2.3959029380944226) circle (2.5pt);
\draw [fill=zzttff] (2.7458107911401535,-1.2184235531825687) circle (2.5pt);
\end{scriptsize}
\end{tikzpicture}
  \caption{$\text{tr}(e^{R_2/u})$}\label{CHERNCHAR}
\end{figure}
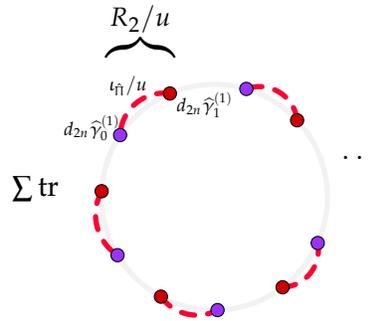

The sum of diagrams of type
II 1-loop diagram is (see Figure \ref{CHERNCHAR})
 $$\sum^{\text{1-loop}}_{\substack{\Gamma_1: \text{type II}}} {W_{\Gamma_1}\over |\text{Aut}(\Gamma_1)|}=\text{tr}(e^{R_2/u}).$$

Putting all the computations together, we get
$$
\uTr(1)=u^ne^{-R_3/u\hbar}(\hat{A}(\mathfrak{sp}_2n)_u\cdot \text{Ch}(\gl_r)_u+O(\hbar))
$$

\end{proof}
Now we can easily compute the  index.
\begin{thm}\label{index thm}As a cohomology class in $H^{\bullet}(\mathfrak{g},\mathfrak{h};\mathbb{K})$,
  $$\uTr(1)=u^ne^{-R_3/u\hbar}(\hat{A}(\mathfrak{sp}_{2n})_u\cdot Ch(\gl_r)_{u})\in H^{\bullet}(\mathfrak{g},\mathfrak{h};\mathbb{K}).$$
\end{thm}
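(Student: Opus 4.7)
The plan is to combine the Gauss--Manin flatness from Proposition \ref{con:EACT} with the semi-classical expansion from Proposition \ref{ONELOOP}, thereby realizing the ``exact semi-classical approximation'' at the level of Lie algebra cohomology.

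Let $\Phi := e^{R_3/u\hbar}\uTr(1)$ and $\Xi := u^n \hat{A}(\mathfrak{sp}_{2n})_u\cdot Ch(\mathfrak{gl}_r)_u$. Both are $\pa_\Lie$-closed cochains in $C^*(\mathfrak{g},\mathfrak{h};\mathbb{K})$: for $\Phi$ this follows from Theorem \ref{TRACE} together with the fact that $R_3$ is valued in the center $Z(\mathfrak{g})$, so $e^{R_3/u\hbar}$ is itself $\pa_\Lie$-closed and can be multiplied freely; for $\Xi$ this is the standard Chern--Weil argument. The crucial point is that $\Xi$ is $\nabla_{\hbar\pa_\hbar}$-flat on the nose, because $\hat{A}(\mathfrak{sp}_{2n})_u$ and $Ch(\mathfrak{gl}_r)_u$ lie in $C^*(\mathfrak{g},\mathfrak{h};\mathbb{C}[u,u^{-1}])$ with coefficients having no $\hbar$-dependence. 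Combined with Proposition \ref{con:EACT}, which asserts that $\nabla_{\hbar\pa_\hbar}\Phi$ is $\pa_\Lie$-exact, the difference $\Psi := \Phi - \Xi$ is $\pa_\Lie$-closed and $\nabla_{\hbar\pa_\hbar}\Psi = \hbar\pa_\hbar \Psi$ is $\pa_\Lie$-exact.

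Next, invoke Proposition \ref{ONELOOP}: since $\Phi$ has only non-negative powers of $\hbar$ with leading $\hbar^0$-coefficient equal to $\Xi$, the difference $\Psi$ has strictly positive powers of $\hbar$. Expand $\Psi = \sum_{k\ge 1}\hbar^k \Psi_k$ with $\Psi_k \in C^*(\mathfrak{g},\mathfrak{h};\mathbb{C}[u,u^{-1}])$, and write $\hbar\pa_\hbar \Psi = \sum_{k\ge 1} k\,\hbar^k \Psi_k = \pa_\Lie \Lambda$ for some $\Lambda\in C^*(\mathfrak{g},\mathfrak{h};\mathbb{K})$. Since $\pa_\Lie$ preserves $\hbar$-grading and the left-hand side has only non-negative $\hbar$-powers, one may replace $\Lambda$ by its non-negative-$\hbar$ part $\sum_{k\ge 0}\hbar^k \Lambda_k$ without loss. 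Matching $\hbar^k$-coefficients gives $k\Psi_k = \pa_\Lie \Lambda_k$, and dividing by $k\ge 1$ yields $\Psi_k = \pa_\Lie(\Lambda_k/k)$. Summing, $\Psi = \pa_\Lie\bigl(\sum_{k\ge 1}\hbar^k\Lambda_k/k\bigr)$ is $\pa_\Lie$-exact, so $[\Phi] = [\Xi]$ in $H^*(\mathfrak{g},\mathfrak{h};\mathbb{K})$, which rearranges into the stated identity.

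The only delicate step, and in my view the main point to verify carefully, is the $\hbar$-grading bookkeeping in the final argument. The explicit primitive $\Lambda$ produced by Proposition \ref{con:EACT} carries a factor of $e^{R_3/u\hbar}$ and thus a priori has negative $\hbar$-powers, but these are harmless because $\pa_\Lie$ preserves $\hbar$-degree and the negative part is automatically $\pa_\Lie$-closed relative to the non-negatively graded right-hand side. Conceptually, the divisibility $k\Psi_k = \pa_\Lie\Lambda_k$ with $k\ge 1$ is the precise mechanism by which $\nabla_{\hbar\pa_\hbar}$-flatness forces all higher-loop quantum corrections to contribute only $\pa_\Lie$-exact terms, leaving the tree plus one-loop computation of Proposition \ref{ONELOOP} as the exact answer in cohomology.
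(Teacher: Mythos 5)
Your proposal is correct and follows essentially the same route as the paper: the proof there also combines Proposition \ref{ONELOOP} with Proposition \ref{con:EACT} and disposes of the $O(\hbar)$ terms by observing that they carry nontrivial $\nabla_{\hbar\pa_\hbar}$-weight and are therefore $\pa_{\Lie}$-exact, which is precisely the coefficient-matching and division-by-$k$ argument you spell out. Your added bookkeeping (that $\pa_{\Lie}$ preserves the $\hbar$-grading of values, that $u^n\hat A(\sp_{2n})_u\cdot\Ch(\gl_r)_u$ is $\nabla_{\hbar\pa_\hbar}$-flat, and that the negative $\hbar$-powers in the primitive are harmless) is a faithful expansion of the paper's terse justification rather than a different argument.
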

\begin{proof} By Proposition \ref{ONELOOP}, we can expand
$$
e^{R_3/u\hbar}\uTr(1)=u^n(\hat{A}(\mathfrak{sp}_{2n})_u\cdot Ch(\gl_r)_{u}+O(\hbar)).
$$
By Proposition \ref{con:EACT}, terms in $O(\hbar)$ have nontrivial weights in $\nabla_{\hbar\pa_\hbar}$ and hence are $\pa_{\Lie}$-exact. Theorem follows by passing to cohomology.
\end{proof}

Now we discuss how to descent the above index theorem to geometric situation.
\begin{defn}\label{UNIVERSALTRACE}
Let $W_D$ be the quantum algebra defined in Section \ref{normalized}, which is the space of flat sections of Fedosov connection. We define the trace map $\Tr: W_D\rightarrow \mathbb{C}\LS{\hbar}$ by
  $$
  \Tr(\mathcal{O})=\int_M\desc(\uTr(\mathcal{O})).
  $$
 It does not depend on $u$ since $ \Tr(\mathcal{O})$ has cohomology degree $0$.
\end{defn}

It is easily to verify the following normalization property
\begin{prop}
 $$ \Tr(f)=\frac{(-1)^n}{\hbar^n}(\int_M\text{tr}(f)\frac{\omega^n}{n!}+O(\hbar)),\quad \forall f\in \Gamma(M,\End(E)).$$
\end{prop}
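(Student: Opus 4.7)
The plan is to specialize the Feynman diagram analysis of Proposition \ref{ONELOOP} to compute $\uTr(f)$ for $f \in \Gamma(M, \End(E))$ and extract the leading $\hbar^{-n}$ contribution after descent and integration over $M$. The unique Fedosov-flat lift $\tilde f \in W_D$ of $f$ has leading symbol $\sigma(\tilde f) = f$, and I would insert $\tilde f$ at the distinguished point $t_0 = 0$ of $S^1$ as an additional vertex in the diagrams computing $\uTr(1)$.

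The key observation to establish first is that at leading $\hbar^{-n}$ order, the $f$-vertex contributes merely as a scalar $\tr(f)$ with no propagators attached. This follows from a straightforward $\hbar$-power count: each blue propagator $\partial_P$ and each red propagator $\iota_{\hat\Pi}/u$ contributes a factor of $\hbar^{+1}$ (from $e^{\hbar\partial_P}$ in $\langle-\rangle_{free}$ and from $e^{\hbar \iota_\Pi/u}$ in $\int_{BV}$ respectively), while each $\widehat{\gamma}/\hbar$ vertex contributes $\hbar^{-1}$. Since $\tilde f$ carries no $\hbar^{-1}$ prefactor, attaching any propagator to the $f$-vertex strictly raises the overall $\hbar$-power, and is therefore subleading relative to the $\hbar^{-n}$ contribution. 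With $f$ isolated, the residual diagrammatic content reproduces precisely the leading-order expansion of $\uTr(1)$ from Proposition \ref{ONELOOP}, giving
$$
\uTr(f) = \tr(f) \cdot u^n e^{-R_3/(u\hbar)}\bigl(1 + O(\hbar)\bigr).
$$

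Next I would apply the Gelfand-Fuks descent via the Fedosov connection. By Proposition \ref{prop-char-desc}, $\desc(R_3)|_{\hbar = 0} = \omega_\hbar|_{\hbar=0} = -\omega$, so at leading $\hbar$-order
$$
\desc(\uTr(f)) = \tr(f) \cdot u^n e^{\omega/(u\hbar)}\bigl(1 + O(\hbar)\bigr).
$$
Since $\Tr(f)$ has cohomology degree $0$ and $M$ is $2n$-dimensional, the only surviving form-degree component under $\int_M$ is the top form, which extracts $\omega^n/\bigl(n!(u\hbar)^n\bigr)$ from $e^{\omega/(u\hbar)}$. Combined with the $u^n$ prefactor this yields $\tr(f)\omega^n/(n!\hbar^n)$, multiplied by the sign $(-1)^n$ that arises from the convention relating $\omega^n/n!$ to the Liouville orientation of $M$ (concretely, from the evaluation $\iota_{\hat\Pi}^n(\omega^n/n!)$ together with the $n$ insertions of $-\omega = \desc(R_3)|_{\hbar=0}$).

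The main obstacle, though elementary, is careful sign and combinatorial bookkeeping across the layers of Berezin integration, symbol map, and descent. This tracking has already been carried out in the proof of Proposition \ref{ONELOOP} for the universal case $f = 1$ (where the rank $r = \tr(\Id_E)$ plays the role of $\tr(f)$); recording the overall $(-1)^n$ factor in that argument and specializing to matrix-valued $f$ produces exactly the claimed normalization
$$
\Tr(f) = \frac{(-1)^n}{\hbar^n}\bracket{\int_M \tr(f)\, \frac{\omega^n}{n!} + O(\hbar)}.
$$
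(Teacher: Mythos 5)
Your overall strategy is the same as the paper's: the leading $\hbar$-order of $\Tr(f)$ is computed by tree diagrams built solely from the linear-in-$y$ part of the Fedosov connection, with $f$ entering only through its symbol $\tr(f)$, and the paper's proof is exactly the evaluation $\int_M\int_{BV}\tr(f)\,e^{d_{2n}(\hat\omega_{ij}y^i dx^j)/\hbar}$. Your power-counting argument for why the $f$-vertex decouples at leading order is sound in substance (fixing form degree $2n$ forces $2n$ connection vertices; the minimal edge count $n$ is achieved only when $f$ is isolated and the vertices pair off into two-vertex trees), and is a detail the paper omits, so that part is a genuine improvement in explicitness.

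The weak point is the sign. You assert the intermediate formula $\uTr(f)=\tr(f)\,u^n e^{-R_3/(u\hbar)}\bigl(1+O(\hbar)\bigr)$ and then use $\desc(R_3)|_{\hbar=0}=-\omega$; carried through literally this gives $e^{+\omega/(u\hbar)}$ and hence $+\,\omega^n/(n!\hbar^n)$ with \emph{no} $(-1)^n$. You then try to recover the $(-1)^n$ from ``$\iota_{\hat\Pi}^n(\omega^n/n!)$'' and an orientation convention, but this is incoherent within your own setup: the operator $e^{\hbar\iota_\Pi/u}$ and the symbol map $\sigma$ are constituents of $\int_{BV}$ and have \emph{already} been applied in forming $\uTr$, so their Koszul signs must be baked into the coefficient of the exponent of the tree sum, not appended at the descent stage. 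The correct bookkeeping is the paper's one-shot computation: the $2n$ linear vertices contribute $\tfrac{1}{n!\hbar^n}(\hat\omega_{ij}\,dy^i dx^j)^n$, and $\sigma\bigl(u^n e^{\hbar\iota_\Pi/u}(-)\bigr)$ applied to this mixed form produces the factor $(-1)^n$ from commuting the odd contractions $\iota_{\partial_{y^i}}\iota_{\partial_{y^j}}$ past the interleaved $dx$'s. So you should either fix the sign of the exponent in your universal formula for the tree sum, or abandon the two-stage (universal-then-descend) packaging for this normalization check and compute the descended Berezin integral directly as the paper does.
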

\begin{proof}
  It follows from the Feynman diagram computations. The $\hbar$-leading term of $\Tr(f)$ comes from the tree diagrams which only involves the vertex $d_{2n}(\hat\omega_{ij}y^i dx^j)$, and this gives
  $$
  \int_M \int_{BV}\text{tr}(f)e^{d_{2n}(\hat\omega_{ij}y^i dx^j)/\hbar}=\frac{(-1)^n}{\hbar^n}\int_M\text{tr}(f)\frac{\omega^n}{n!}.
  $$
\end{proof}

The index for the quantum algebra is given by
$$
\Tr(1)=\int_M\desc(\uTr(1)).
$$
Then Theorem \ref{index thm} and Proposition \ref{prop-char-desc} imply the following algebraic index theorem (see \cite{fedosov-bundle}).
\begin{thm}
Let $M$ be a compact smooth symplectic manifold of dimension $2n$, $E$ be a complex vector bundle over $M$. Let $W_D$ is the quantum algebra associated to the deformation quantization class $\omega_{\hbar}$. Let $\Tr$ be the  trace map on $W_D$  obtained from the universal trace $\uTr$ by descent construction. Then
 $$ \Tr(1)=\int_{M}e^{-\omega_{\hbar}/\hbar}\hat{A}(M)\Ch(E).$$
\end{thm}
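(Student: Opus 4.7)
The strategy is to derive this geometric index formula as a direct corollary of the universal identity already proved in Theorem \ref{index thm}, by pushing it forward along the Gelfand--Fuks descent associated to the Fedosov projective-flat $(\g,K)$-bundle. The two key inputs are Theorem \ref{index thm} itself (which packages all the hard analytic content --- the one-loop Feynman-diagram computation of Proposition \ref{ONELOOP} together with the Gauss-Manin flatness of Proposition \ref{con:EACT}) and Proposition \ref{prop-char-desc} (compatibility of descent with the universal characteristic classes).

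First I would unwind the definition $\Tr(1)=\int_M\desc(\uTr(1))$, where $\desc$ is the descent map attached to the projective-flat $(\g,K)$-principal bundle $F_{\mathrm{Sp}}(M)\times_M Fr(E)$ equipped with the connection $A+\gamma_E$. Since $\uTr$ is closed (Theorem \ref{TRACE}), the class of $\desc(\uTr(1))\in H^{2n}(M,\mathbb{K})$ depends only on the class of $\uTr(1)\in H^{2n}(\g,\mathfrak{h};\mathbb{K})$, and I may substitute the explicit representative supplied by Theorem \ref{index thm},
\[
 \uTr(1)=u^n\, e^{-R_3/u\hbar}\,\hat A(\mathfrak{sp}_{2n})_u\cdot\Ch(\gl_r)_u .
\]

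Next, since $\desc$ is $\mathbb{K}$-linear, a ring homomorphism compatible with exponentials, and commutes with the $(-)_u$-weighting (which only rescales homogeneous components by scalars in $\mathbb{K}$), I would apply Proposition \ref{prop-char-desc} factor by factor to obtain
\[
 \desc(\uTr(1))=u^n\, e^{-(\omega_\hbar-\hbar\omega_1)/u\hbar}\,\hat A(M)_u\cdot \bigl(e^{\omega_1}\Ch(E)\bigr)_u
\]
as an inhomogeneous form on $M$ valued in $\mathbb{C}\LS{\hbar}[u,u^{-1}]$. The last step is to integrate over $M$, picking out the $2n$-form component: because the total degree of $\uTr(1)$ is $2n$ and the $u$-grading carries degree $2$, the surviving component is automatically a $u^0$-term, which recovers the $u$-independence already observed in Definition \ref{UNIVERSALTRACE}. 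Collecting the $2n$-form part and combining the $\omega_1$-contributions coming from $\desc(R_3)$ with those coming from $\desc(\Ch(\gl_r))$, one is left with
\[
 \Tr(1)=\int_M e^{-\omega_\hbar/\hbar}\,\hat A(M)\,\Ch(E),
\]
as claimed.

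The main ``obstacle'' here is purely bookkeeping: none of the genuinely new mathematical input appears in this descent step, since it has all been absorbed into Theorem \ref{index thm}. What remains is careful tracking of $\hbar$-powers, of the interplay between Lie-algebra cochain degree and form degree on $M$, and of the $u$-weighting through $\desc$, together with the verification that the $\omega_1$-corrections organize themselves into a single exponential factor that multiplies $\hat A(M)\Ch(E)$ in the clean form above.
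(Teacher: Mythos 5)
Your proposal is correct and is essentially identical to the paper's own proof, which consists of the single sentence that Theorem \ref{index thm} and Proposition \ref{prop-char-desc} imply the geometric statement, leaving the descent bookkeeping implicit. The only point deserving care is the one you flag yourself: the $\omega_1$-terms produced by $\desc(R_3)=\omega_\hbar-\hbar\omega_1$ and by $\desc(\Ch(\gl_r))=e^{\omega_1}\Ch(E)$ must cancel rather than accumulate in the final product, and this is exactly the verification the paper also leaves to the reader.
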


% ----------------------------------------------------------------
\bibliographystyle{plain}

\end{document}